\newtheorem{theorem}{Theorem}[section]
\newtheorem{thm}[theorem]{Theorem}
\newtheorem{fact}[theorem]{Fact}
\newtheorem{prop}[theorem]{Proposition}
\newtheorem{claim}[theorem]{Claim}
\newtheorem{notation}[theorem]{Notation}
\newtheorem{conjecture}[theorem]{Conjecture}
\newtheorem{lemma}[theorem]{Lemma}
\newtheorem{cor}[theorem]{Corollary}
\theoremstyle{definition}
\newtheorem{definition}[theorem]{Definition}
\newtheorem{example}[theorem]{Example}
\newtheorem{remark}[theorem]{Remark}
\DeclareMathOperator \SL{SL}
\DeclareMathOperator \GL{GL}
\DeclareMathOperator \PGL{PGL}
\newcommand{\Op}{\text{\it Mult}}
\newcommand{\Inv}{\text{\it Inv}}
\newcommand{\Tree}[1]{\mathit{Tree}(#1) }
\newcommand{\NN}{{\mathbb{N}}}
\newcommand{\RR}{{\mathbb{R}}}
\newcommand{\QQ}{{\mathbb{Q}}}
\newcommand{\ZZ}{{\mathbb{Z}}}
\newcommand{\sub}{\subseteq}
\newcommand{\sN}[1]{_{#1\in \NN}}
\newcommand{\sNp}[1]{_{#1\in \NN^+}}
\newcommand{\uhr}[1]{\! \upharpoonright_{#1}}
\newcommand{\bi}{\begin{itemize}}
\newcommand{\ei}{\end{itemize}}
\newcommand{\bc}{\begin{center}}
\newcommand{\ec}{\end{center}}
\newcommand{\ES}{\emptyset}
\newcommand{\tp}[1]{2^{#1}}
\newcommand{\ex}{\exists}
\newcommand{\fa}{\forall}
\newcommand{\la}{\langle}
\newcommand{\ra}{\rangle}
\newcommand{\n}{\noindent}
\newcommand{\vsp}{\vspace{6pt}}
\newcommand{\sss}{\sigma}
\newcommand{\aaa}{\alpha}
\renewcommand{\S}{S(\omega)}
\newcommand{\lland}{\, \land \, }
\newcommand \seq[1]{{\left\langle{#1}\right\rangle}}
\newcommand\+[1]{\mathcal{#1}}
\newcommand{\wt}{\widetilde}
\newcommand{\ol}{\overline}
\newcommand{\ape}{\, \hat{\ } \, }
\newcommand{\lra}{\leftrightarrow}
\newcommand{\LR}{\Leftrightarrow}
\newcommand{\RA}{\Rightarrow}
\newcommand{\LA}{\Leftarrow}
\newcommand{\rapf}{\n $\RA:$\ }
\newcommand{\lapf}{\n $\LA:$\ }
\newcommand{\sssl}{\ensuremath{|\sigma|}}
\newcommand{\range}{\ensuremath{\mathrm{range}}}
\newcommand{\dom}{\ensuremath{\mathrm{dom}}}
\DeclareMathOperator{\Aut}{Aut}
\numberwithin{equation}{section}
\renewcommand{\hat}{\widehat}
\begin{document}

\title{Logic Blog 2022}

 \author{Editor: Andr\'e Nies}

\maketitle


\setcounter{tocdepth}{1}
\tableofcontents

 {
The Logic   Blog is a shared platform for
\bi \item rapidly announcing  results and questions related to logic
\item putting up results and their proofs for further research
\item parking results for later use
\item getting feedback before submission to  a journal
\item fostering collaboration.   \ei

Each year's   blog is    posted on arXiv  2-3 months after the year has ended.
\vsp
\begin{tabbing}

 \href{https://arxiv.org/pdf/2202.13643.pdf}{Logic Blog 2021} \ \ \ \   \= (Link: \texttt{http://arxiv.org/abs/2202.13643})  \\

 \href{https://arxiv.org/pdf/2101.09508.pdf}{Logic Blog 2020} \ \ \ \   \= (Link: \texttt{http://arxiv.org/abs/2101.09508})  \\
 
 \href{http://arxiv.org/pdf/2003.03361.pdf}{Logic Blog 2019} \ \ \ \   \= (Link: \texttt{http://arxiv.org/abs/2003.03361})  \\

 \href{http://arxiv.org/pdf/1902.08725.pdf}{Logic Blog 2018} \ \ \ \   \= (Link: \texttt{http://arxiv.org/abs/1902.08725})  \\
 
 \href{http://arxiv.org/pdf/1804.05331.pdf}{Logic Blog 2017} \ \ \ \   \= (Link: \texttt{http://arxiv.org/abs/1804.05331})  \\
 
 \href{http://arxiv.org/pdf/1703.01573.pdf}{Logic Blog 2016} \ \ \ \   \= (Link: \texttt{http://arxiv.org/abs/1703.01573})  \\
 
  \href{http://arxiv.org/pdf/1602.04432.pdf}{Logic Blog 2015} \ \ \ \   \= (Link: \texttt{http://arxiv.org/abs/1602.04432})  \\
  
  \href{http://arxiv.org/pdf/1504.08163.pdf}{Logic Blog 2014} \ \ \ \   \= (Link: \texttt{http://arxiv.org/abs/1504.08163})  \\

   \href{http://arxiv.org/pdf/1403.5719.pdf}{Logic Blog 2013} \ \ \ \   \= (Link: \texttt{http://arxiv.org/abs/1403.5719})  \\

    \href{http://arxiv.org/pdf/1302.3686.pdf}{Logic Blog 2012}  \> (Link: \texttt{http://arxiv.org/abs/1302.3686})   \\

 \href{http://arxiv.org/pdf/1403.5721.pdf}{Logic Blog 2011}   \> (Link: \texttt{http://arxiv.org/abs/1403.5721})   \\

 \href{http://dx.doi.org/2292/9821}{Logic Blog 2010}   \> (Link: \texttt{http://dx.doi.org/2292/9821})  
     \end{tabbing}

\vsp

\n {\bf How does the Logic Blog work?}

\vsp

\n {\bf Writing and editing.}  The source files are in a shared dropbox.
 Ask Andr\'e (\email{andre@cs.auckland.ac.nz})  in order    to gain access.

\vsp

\n {\bf Citing.}  Postings can be cited.  An example of a citation is:

\vsp

\n  H.\ Towsner, \emph{Computability of Ergodic Convergence}. In  Andr\'e Nies (editor),  Logic Blog, 2012, Part 1, Section 1, available at
\url{http://arxiv.org/abs/1302.3686}.}

%

%

 
The logic blog,  once it is on  arXiv,  produces citations e.g.\ on Google Scholar.
%

%
%
%
  \part{Group theory    and its connections to  logic}
         \section{Gardam's refutation of the Higman/Kaplansky unit conjecture}
\newcommand{\supp}{\mathit{supp}}
\begin{definition} An element $u$ of a ring $R$ with $1$  is called a \emph{unit} if there exists $w\in R$  such that $uw=wu=1$.  The units of $R$ with the ring multiplication  form a group, which is denoted~$R^\times$. \end{definition}
Note that if $R$ has no $0$-divisors, then it suffices to require that $uw=1$; this implies that $u(wu-1)= 0$ and hence $wu=1$. 

Given a commutative ring $R$ with $1$ and a group $G$, recall that the group ring $R[G]$ consists of the formal sums $w=\sum_{g \in S} r_g g$, where $S \sub G$ is finite, and $r_g \in R-\{0\}$, with the obvious ring operations. The empty formal sum denotes  the $0$ of the group ring.  The   set $S$ is called the support of $w$, and we write $S= \supp(w)$.  The ring  $R$ embeds into $R[G]$ via $0_R \mapsto 0$, $r \mapsto r e$ for $r \neq 0$.   
Clearly $G $ embeds into  $R[G]^\times $ via $g \mapsto 1 g$.

The unit  conjecture stood for 81 years.
\begin{conjecture}[Higman 1940/Kaplansky 1970] Let $G$ be a torsion free group and $K$ a field. Then the group ring $K[G]$ has only trivial units, namely the ones  of the form $kg$, where $k \in K^\times$ and $g \in G$.  \end{conjecture}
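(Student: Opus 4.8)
Rather than seeking a general proof, the plan is to \emph{refute} the conjecture: to exhibit a single torsion-free group $G$, a single field $K$, and a single unit of $K[G]$ that is not of the form $kg$ with $k\in K^\times$, $g\in G$. For $G$ I would take the \emph{Promislow group} $P$, the torsion-free crystallographic group of dimension $3$ --- the fundamental group of the Hantzsche--Wendt flat $3$-manifold --- which sits in a short exact sequence $1 \to \ZZ^3 \to P \to (\ZZ/2\ZZ)^2 \to 1$ where the Klein four-group acts on $\ZZ^3$ through the three nontrivial diagonal sign matrices of determinant $1$. This $P$ is the standard example of a torsion-free group lacking the \emph{unique product property}; since that property implies triviality of all units, its failure for $P$ is necessary for $P$ to be a counterexample and makes $P$ the natural place to search. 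For $K$ I would take $\mathbb F_2$, which renders the search a finite combinatorial problem.

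Next I would fix coordinates. Choosing generators $x,y$ of $P$ projecting onto the two generators of $(\ZZ/2\ZZ)^2$, every element of $\mathbb F_2[P]$ is uniquely $p = a + bx + cy + d\,(xy)$ with $a,b,c,d$ in the Laurent ring $\mathbb F_2[X^{\pm1},Y^{\pm1},Z^{\pm1}]$, where the translation subgroup $\ZZ^3$ is generated by $X=x^2$, $Y=y^2$, $Z=(xy)^2$. Multiplication of two such elements is governed by the Klein four-group multiplication table on the four coset representatives $1,x,y,xy$, together with the sign-change action of conjugation by $x$ and $y$ on $X,Y,Z$ and the translations produced by cross terms such as $(bx)(cy)$. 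Bounding the Newton polytopes of $a,b,c,d$ inside a small box converts the equation $pq=1$, with unknown $q = a'+b'x+c'y+d'\,(xy)$, into a finite system of quadratic equations over $\mathbb F_2$ in the coefficients of $a,b,c,d,a',b',c',d'$.

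I would then feed this system to a SAT solver, or search exhaustively over low-degree supports, requesting a solution in which $p$ is not a monomial. When a candidate pair $(p,q)$ is returned the verification is purely mechanical: expand $pq$ and $qp$ using the multiplication rules above, confirm that both equal $1$, and note that $|\supp(p)|>1$. It is worth recording $q$ explicitly as the image of $p$ under a suitable symmetry or anti-automorphism of $\mathbb F_2[P]$, both as an independent check and because such symmetry can cut down the number of unknowns and guide the search.

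The genuine obstacle is not the verification --- that is finite linear algebra over $\mathbb F_2$ --- but \emph{finding} the example: the number of candidate supports grows very fast, so one must correctly guess the group, the pattern of nonzero coset components, and a box small enough to keep the SAT instance tractable before any search can succeed. Two classical facts are still needed for the conclusion to be meaningful: that $P$ is genuinely torsion-free (it is a Bieberbach group, acting freely by isometries on $\RR^3$), and that $P$ embeds faithfully into $\mathbb F_2[P]^\times$, so that a nontrivial unit really does contradict the conjecture.
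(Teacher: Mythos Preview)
Your refutation plan is correct and matches the paper's account of Gardam's disproof essentially point for point: the same group $P$ (the Promislow/Hantzsche--Wendt/Passman 4-group as a $\ZZ^3$-by-Klein-four extension), the same field $\mathbb F_2$, the same four-term normal form over the Laurent ring of the abelian translation subgroup, the same conversion of $pq=1$ with bounded support into a SAT instance, and the same observation that the inverse is related to the unit by a twist (the paper records $p'=x^{-1}p^a$, $q'=x^{-1}q$, $r'=y^{-1}r$, $s'=z^{-1}s^a$). The only thing the paper adds beyond your outline is the explicit solution and its machine verification; your remark that ``$P$ embeds faithfully into $\mathbb F_2[P]^\times$'' is automatic and not a separate ingredient.
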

The unit conjecture goes back to 
  Higman's  1940 PhD thesis at the University of Oxford. He noted that  it  holds for abelian groups, and proved it more generally for locally indicable groups (groups such that  each nontrivial f.g.\ subgroup has a quotient isomorphic to~$\ZZ$).
   
  Kaplansky in his 1956 problem list  on the occasion of a   conference on linear algebras held at \href{http://www.theramsheadinn.com}{Ram's Head Inn}  on Shelter Island, Long Island, conjectured   that $K[G]$ has no zero divisors. 
  In the revised 1970 version, he reviewed progress on this problem~\cite[Problem 6]{Kaplansky:70}, and also   added Higman's unit conjecture in the discussion thereafter, crediting it to a problem list written up after a 1968 conference in Kishinev, Moldavia.  
  
  The idempotent conjecture states that $K[G]$ has no  idempotents other than  $0,1$. It was formulated in the late  1940's for reduced group $C^*$-algebras, but did not appear in print until sometime later. 
  
\subsection{Implications of properties of a  torsion free group $G$}
The  property of a  torsion free group that it satisfies the unit conjecture for a field $K$  can be seen in the context of a hierarchy  of properties. 
In the  following list,  the first four  properties don't mention a field.
 Each property implies the next; trivially (4) also implies (3). \begin{enumerate}   

\item $G$ is \emph{left orderable}
\item $G$ is \emph{diffus}e (Bowditch  \cite{Bowditch:00}): if $C \sub G$ is a  nonempty finite set,  then there is $c \in C$, called an extremal element,  such that \bc  for each $g \in G-\{1\}$, one has $gc \not \in C$ or $g^{-1}c \not \in C$.   \ec
\item $G$ has the \emph{unique product property} (Kaplansky, Passman): if $A,B \sub G$ are nonempty finite sets,  then there is a pair $\la a, b \ra \in A \times B$ that is the only pair in $A \times B$   yielding  the product $ab$. 
\item $G$ has the \emph{two unique products property}: if  in addition to the above,  not both $A$ and $B$ are singletons, then there exist \emph{two } such pairs.
\item $K[G]$ has only the  trivial units. 
\item $K[G]$ has no zero  divisors.
\item $K[G]$ has no  idempotents other than  $0,1$. 

\end{enumerate}

 The implications can be seen as follows:

 \medskip

\n (1)$\to $(2): let $a = \max A$. If $ga < a$ then $a < g^{-1}a$.

\medskip 

\n (2)$\to $(3): 
Let $C = AB$. Let $c \in C$ be extremal. If $c= ab = rs$ where \bc $a,r \in A$ and $b,s \in B$ and $g=ra^{-1} \neq e$, \ec then  $gab = rb \in C$ and also $g^{-1}ab = as \in C$.
  
\medskip 

\n (3)$\lra $(4):  Due to Strojnowski~\cite{Strojnowski:80}. He also shows that  e in the definition of the unique product property one can assume that $A=B$.  The proof shows that if the property fails then $A$ can be made arbitrarily large.
\medskip 

\n (4)$\to $(5): Given two elements of $K[G]$ so that not both have support a singleton, the two unique products mean that the support of the product also is not a singleton.
\medskip

\n  (5)$\to $(6): Recall that a ring $R$  is called \emph{prime} if $u,v\in R - \{0\}$ implies that there is $r\in R$ such that $urv\neq 0$. Every  zero divisors free ring is prime because in that case   one can choose  $r=1$. 
It is known that $K[G]$ is prime (it suffices to assume here that  $G$ has no nontrivial finite normal subgroup); see Passman~\cite[Th.\ 2.10]{Passman:11}.

Suppose that $ab = 0$ in $K[G]$ where $a,b \neq 0$, and take $c$ such that $\aaa:=bca \neq 0$. Then $\aaa^2 =0$.   We have   $(1+\aaa)(1-\aaa)=(1-\aaa)(1+\aaa) = 1$,  so $1-\aaa$ is a  unit. If it is trivial, then $1-\aaa = kg$, so $\aaa =1-kg \in K[\la g \ra] \cong K[\ZZ]$ is a nonzero    element with square 0 in the ring of univariate  Laurent polynomials over $K$,  which is contradictory since that ring is a domain.
 
 \medskip

\n (6)$\to $(7)  Any nontrivial idempotent $x$ is a zero divisor because $x(x-1)= 0$.  

 \medskip 
 The following  nonimplications are known at present: 
 
 \n (2) $\not \to$(1) by  a counterexample in the appendix to~\cite{kionke.raimbault:16}. This is the fundamental group of a certain   closed orientable hyperbolic 3-manifold.  
 
 \medskip 
 
 \n   (6) $\not \to$(5) for the 2-element field, because $\mathbb F_2[P]$ is known to be a domain, but has nontrivial units as discussed below. The same works for other    positive characteristics by the work of Alan Murray providing nontrivial units in such characteristics,  \url{arxiv.org/abs/2106.02147}.
 
 \subsection{Can one express these properties in first-order logic?}
 We consider the question which of the foregoing  properties can be expressed by a computable set $\{\phi_n\colon  n \in \NN\}$ of first-order sentences in the language of groups. For the properties involving a field $K$, we mean the assertion to hold for all fields.
 
 (1) it remains to be settled whether this is expressible. This might well be someone's open question.
 
 (2)-(4) are expressible, using one sentence for each size $n$ of a finite set,   respectively  sizes of pairs of finite sets. To represent  a   finite subset  $C$  of $G$ of size $n$ using f.o.\ logic,  we use    variables $x_1, \ldots, x_n$  and  think of $C$ as being $\{ x_1, \ldots, x_n\}   $. So (2) is expressed by the collection of sentences 
 \[\phi_n = \forall x_1, \ldots, x_n \bigvee_{i\le n} \forall y [ \bigwedge_k yx_i \neq x_k \lor  \bigwedge_k y^{-1}x_i \neq x_k].\]
 They are easily turned into universal sentences.
 
 For (5)-(7) note that each field is embeddable into an algebraically closed fields. So  to express the versions of  (5)-(7) where the  quantification is over  all fields, we might as well assume the fields are  algebraically closed. We can also   fix a  characteristic $d \in \NN-\{ 0\}$, because then we can throw the sentences for all $d$ together.

 We do (6); (5) and (7) will be similar.  
For $r,s, \ell \in \NN^+$ and $\ell \le rs$, an onto  function 
\bc $T \colon \{ 1, \ldots,  r\} \times \{ 1, \ldots, s\} \to \{ 1, \ldots, \ell\} $  \ec is codes    a partial  multiplication table  of  a group $G$. The sentence   in the language of groups $\phi_T = \forall x_1, \ldots, x_r \fa y_1 ,\ldots, y_s \fa z_1 ,\ldots , z_\ell $
\[  \bigvee_{i< j\le r }x_i = x_j \lor  \bigvee_{i< j\le s }y_i = y_j \lor \bigvee_{i\le r\land k\le s} z_{T(i,k)} \neq x_iy_k \]
says that this table cannot be realized by column of pairwise distinct elements  and row of pairwise distinct elements.

For each  such $T$ we have a system of equalities and inequalities, 
\[E_T\equiv \bigvee_i \aaa_i\neq 0 \lland \bigvee_k \beta_k \neq 0  \lland \bigwedge_{u \le \ell} [ \sum_{T(i,k)=u} \aaa_i \beta_k =0],\]
where $\aaa_i, \beta_k$ range over a field~$K$.

 Given a finite function  $T$, we put  $\phi_T$ on  the list of sentences whenever this system has a solution in an algebraically closed  field of characteristic~$d$. Then the whole list says that   (6) holds for such a field. Since $ACF_d$ is decidable uniformly in $d$ (and the sentences get longer as the parameters for $T$ increase), the resulting set of all  sentences is computable. 
 
Let $(\phi_n)$ be the list for   (3), and $(\psi_m) $ be the list for (5). By the compactness theorem,  if  (5) implies (3), then each sentence $\phi_n$ is implied by a finite list   $\psi_r$, $r \le L(n)$ where $L$ is computable. So to  show that (5) does not imply (3), it would suffice to find an instance  (given by the size of a  set $A$) of the unique product property (3) such that no,  however large,  instance   of the unit conjecture (5) implies it.
 
 \subsection{Torsion free groups without the unique product property}
  Rips and Segev~\cite{Rips.Segev:87}  used small cancellation theory to provide the first example of a torsion-free group $G $ that fails to have the unique product property. 
  
  Define a torsion free group by  
  \bc  $P = \la a,b \mid a^{-1}b^2 a = b^{-2}, b^{-1}a^2 b= a^{-2}\ra$. \ec 
 This group $P$ was  first proposed by Passman in connection with the Kaplansky conjectures, and had appeared in various related investigations. 
Promislow~\cite{Promislow:88}   showed that there is a     14-element subset $A$   of $P$ such that $P$ fails to have the unique product property via  $A,A$. 

 $P$   is    known as the Passman 4-group, the  Hantzsche-Wendt group and also the Promislov group due to the authors of these   papers.  Note that $P$  is  an extension of a free abelian group of rank 3, $N=\la a^2, b^2, (ab)^2\ra$ by the Klein 4 group $P/N= \la Na, Nb\ra$.  So  $P$ is   a  crystallographic group; this means that the group is a cocompact discrete subgroup of the topological group $O(n) \ltimes \RR^n$, for some $n$. The latter is the group of isometries of $\RR^n$ with the Euclidean norm. 
 
 Let  $x= a^2, y=b^2$. One can write $P$ as an amalgam 
\[ \la x,b \mid x^b= x^{-1} \ra *_U \la y, a\mid y^a= y^{-1} \ra \] where $U = \la x,b^2\ra \cong \la a^2, y \ra$. This shows that $P$ is torsion free.

We note that the group can in fact be identified among the  crystallographic groups in dimension 3 as the unique torsion-free example with finite abelianization.  In dimension 2, the only torsion-free ones are free abelian and Klein bottle group, both locally indicable. It follows that all the other dimension 3 torsion-free space groups are locally indicable, and  so satisfy the conjecture.

Some more complex variants of $P$ also fail to have the unique product property. 
Carter~\cite{Carter:14} defined for $k>0$ the torsion free group 
\[ P_k  = \la a,b \mid a^{-1}b^{2^k} a = b^{-{2^k}}, b^{-1}a^2 b= a^{-2}\ra.\]
Thus $P= P_1$.  Generalising the case of    $P_1$, each group  $P_k$ is an amalgam of two Klein 4 groups along a copy of $\ZZ^2$,   letting $y= b^{2^k}$. This  implies that $P_k$ satisfies the 0-divisors conjecture by a result of Lewin~\cite{Lewin:72}.  Carter shows that $P_k$ fails the unique product property for each $k$, that the $P_k$ are pairwise non-isomorphic, and that $P_k$ does not contain $P$ as a subgroup for $k>1$. Also each  $P_k$ for $k>1$  has  a subgroup of finite index of the form $\ZZ^2 \times F$ where $F$ is a free group of rank $> 1$. 
  
  Craig and Linnell~\cite{Craig.Linnell:22} conjecture that each uniform pro-p group satisfies the unique product property. They also consider the obvious variants of $P$ with more than 2 generators, and show that they are torsion free. However, such variants embed $P$.

 \subsection{A counterexample to the unit conjecture}
Gardam~\cite{Gardam:21} published a counterexample to the   unit conjecture for $P$ with the field $ \mathbb F_2$.  
Each element of $\mathbb F_2[P]$ has a unique normal form  \[p + qa + rb + sab,\] where the ``coefficients" $p,q,r,s $ are in $\mathbb F_2[N]$, $N= \la x,y,z\ra$. 

Gardam's example is given by $\aaa = p + qa + rb + sab$ where
 \begin{align*}
        p &= (1 + x) (1 + y) (1 + z^{-1}) \\
        q &= x^{-1} y^{-1}  + x + y^{-1} z + z \\
        r &= 1 + x + y^{-1} z + xyz \\
        s &= 1 + (x + x^{-1} + y + y^{-1}) z^{-1}.
  \end{align*}
  
  The inverse is given by $\beta= p' + q'a + r'b + s'ab$ where \bc $p'= x^{-1} p^a, q'= x^{-1}a, r'= y^{-1}r, s'= z^{-1}s^a$.  \ec 
  
  Murray  (\url{arxiv.org/abs/2106.02147}, Thm.\ 3),  provides  counterexamples to the unit conjecture for $P$ in any positive characteristic. For characteristic 0 it remains unknown whether the unit conjecture holds. 
Suppose there is a finite   set $S\sub P$ such that  for infinitely many primes p, there is a nontrivial unit in $F_p[P] $ with  support of the unit and its inverse contained in $S$. Let   $ K$  be an  ultraproduct of these  fields, which has characteristic 0. Then $K[P]$ has a nontrivial unit  with the same support $S$. However for the   examples of Murray,  the size of the support depends on the characteristic.

 \subsection{Conversion to a Boolean satisfiability problem} To find this, Gardam  converted the problem into a satisfiability problem and used a SAT solver to find a satisfying assignment, which  yields the unit and its inverse.  If we fix a finite superset $S$ of the supports of $\beta$ and $\aaa$, then for each word $w$ in $S$ we need only one bit, its  coefficient in $\mathbb F_2$, So we use  variables $\ell_w$ and $k_w$ to say whether  the corresponding coefficients of $\beta$ and  of $\alpha$, respectively,  are~$0$ or~$1$. We can   write that $\aaa$ is nontrivial and that $\beta\alpha=1$ as a Boolean formula: 
  \bi \item Nontriviality  of $\aaa$ is expressed by saying that the disjunction of the $k_w$ for $w \in S - \{1\}$  is true.
 \item the parity of 
$ \sum_{g, g^{-1} \in S} (\ell_{g^{-1}} \land k_g)$ is $1$
  \item For each $w \in S^2 - \{1\}$ we have a condition  that 
  
  \n the parity of 
$ \sum_{ h,g \in S \lland  hg = w} (\ell_h \land k_g)$ is $0$.\ei

Using standard methods this can be   put  into 3-conjunctive normal form.
For the given example, one takes $S$ as the ball of radius 5 in the Cayley graph of $P$ (which has 147 elements). A  SAT solver can find the answer in a reasonable time (hours to days).  See the 2021 lecture \href{https://www.youtube.com/watch?v=is7Gw5SDPsQ}{Giles Gardam: Solving semidecidable problems in group theory}.

\subsection{The group of units is not finitely generated} Gardam shows that in a sense  $\mathbb F_2[P]^{\times}$ is much larger   than $T$;   it is not finitely generated, and contains non-abelian free subgroups.

 Gardam~\cite[Lemma 1]{Gardam:21}   provides a lemma with conditions that  imply  being a unit, and these can be checked for the unit in question on about 2 pages. This more general method allows to prove that the group of units is not finitely generated.   Gardam   shows  the existence of infinitely many non-trivial units using this   algebraic condition.  However, the idea why  it works is that  the group $P$  contains many isomorphic copies of itself. This appears to be  necessary for such a trick to work: the multiplication table of the unit and its inverse determine the subgroup they generate up to isomorphism (in this case). Also the group $P$  has a  finite outer automorphism group~\cite{Zimmermann:90}, so one  cannot prove that  the subgroup of the group of units generated by group-automorphic images of the  non-trivial unit is not finitely generated by looking at the abelianization of the group of units. The result  crucially needs the computation of the group of units for the infinite dihedral group $\la a, b \mid b^2=1, a^b = a^{-1}\ra $ (a quotient of $P$ via this presentation) by Mirowicz.

 \subsection{Is the    group of trivial units first-order definable in $K[G]$?} During discussions of Nies and Gardam in M\"unster,  the question emerged whether the group $T$ is  first-order definable  in the group of units of  $\mathbb F_2[P]$ (or, at least, in the ring $\mathbb F_2[P]$).

 As is well-known, if $ H < G$  are groups and  $\alpha \in K[H]$  is a unit  in $K[G]$ then it is a unit   in $K[H]$ (and conversely). We use this below without special mention.
 
  If  $T$ was  definable in $\mathbb F_2[P]^\times$ without parameters, then $T$ would be invariant under automorphisms of  $\mathbb F_2[P]^{\times}$. The following fact shows that under a quite general hypothesis on $G$,   the group $T$ is   not even closed under conjugation by a nontrivial unit. Since any automorphism of $G$ extends to an automorphism of $K[G]$, this shows that $T$ is also not definable in $K[G]$ when there are nontrivial units and  the hypothesis on $G$ is satisfied.

\begin{prop}[Gardam] \label{prop: noninvariant}Let $K$ be a field. Let $G$ be a torsion free group with no  infinite nonabelian  f.g.\ subgroup $V$ such that $V/Z(V)$ has finite exponent (for instance, this holds if $G$  is solvable).   Let $T=K^\times G $ denote the group of trivial units in $K[G]$. 
	
	 Then the  group   $T$ equals  its own normaliser in the group of   units of  $K[G]$.    \end{prop}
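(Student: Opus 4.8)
The plan is to take an arbitrary unit $u$ of $K[G]$ lying in the normaliser of $T$ in $K[G]^\times$ and show $u\in T$; this suffices because $T$ is always contained in its normaliser. First I would make a harmless reduction: replacing $u$ by $h^{-1}u$ for some $h\in\supp(u)$ — this again normalises $T$, and its lying in $T$ would give $u\in T$ — I may assume $1\in\supp(u)$. Next I invoke the augmentation homomorphism $\varepsilon\colon K[G]\to K$, $\varepsilon(\sum_g r_g g)=\sum_g r_g$. Any unit $v$ has $\varepsilon(v)\in K^\times$, so for $g\in G$ we get $\varepsilon(ugu^{-1})=\varepsilon(u)\varepsilon(u)^{-1}=1$. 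Since $ugu^{-1}\in uTu^{-1}=T$ is a trivial unit, say $ugu^{-1}=k\gamma$ with $k\in K^\times$, $\gamma\in G$, applying $\varepsilon$ gives $k=1$; hence $ugu^{-1}\in G$. Running the same argument for $u^{-1}$ shows that $\phi\colon g\mapsto ugu^{-1}$ is an automorphism of $G$.

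The key step constrains $\supp(u)$. Write $u=\sum_{d\in D}c_d\,d$ with $D=\supp(u)$ and $c_d\in K^\times$. The identity $ug=\phi(g)u$ forces $Dg=\phi(g)D$ for every $g\in G$ (and matches up the coefficients). Hence each map $\rho_g\colon d\mapsto\phi(g)\,d\,g^{-1}$ permutes the \emph{finite} set $D$, and $g\mapsto\rho_g$ is a homomorphism from $G$ into the symmetric group on $D$; let $K_0$ be its kernel, a finite-index normal subgroup of $G$. Evaluating $\rho_g$ at $1\in D$ shows $\phi$ fixes $K_0$ pointwise, and then $\rho_g=\mathrm{id}$ for $g\in K_0$ reads $g\,d\,g^{-1}=d$ for all $d\in D$; that is, $D\subseteq C_G(K_0)$.

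Now put $W=\langle\supp(u)\rangle$, a finitely generated subgroup of $G$ centralising the finite-index subgroup $K_0$. Then $W\cap K_0$ has finite index in $W$ and is contained in $Z(W)$, so $W/Z(W)$ is finite, in particular of finite exponent. As $W$ is finitely generated and $G$ is torsion free, the hypothesis on $G$ prevents $W$ from being infinite and nonabelian, and a finite $W$ would be trivial; either way $W$ is abelian. (One could instead quote Schur's theorem: finiteness of $W/Z(W)$ makes $W'$ finite, hence trivial in the torsion-free $W$.) So $W\cong\ZZ^d$ for some $d$, and $u\in K[W]\cong K[\ZZ^d]$, a Laurent polynomial ring over $K$. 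Such rings have only trivial units, so $u=c\,w$ with $c\in K^\times$, $w\in W\subseteq G$, whence $u\in K^\times G=T$. Undoing the first-step reduction gives $u\in T$.

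The main obstacle is the middle paragraph: realising that the ($\phi$-twisted) conjugation action of $G$ on the finite set $\supp(u)$ traps $\supp(u)$ inside the centraliser of a finite-index subgroup of $G$. After that, the hypothesis (or Schur's theorem) and the triviality of units in $K[\ZZ^d]$ close the argument with only routine verifications.
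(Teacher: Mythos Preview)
Your proof is correct and follows essentially the same approach as the paper: reduce to $1\in\supp(u)$, use the augmentation map to land $ugu^{-1}$ in $G$, exploit the induced permutation action of $G$ on the finite set $\supp(u)$ to force $V=\langle\supp(u)\rangle$ to satisfy $V/Z(V)$ of finite exponent, invoke the hypothesis to make $V$ abelian, and finish via triviality of units in $K[\ZZ^d]$. The only cosmetic difference is that you take the kernel $K_0$ of the action (obtaining $W/Z(W)$ finite), whereas the paper raises to the $|S|!$-th power (obtaining $V/Z(V)$ of exponent dividing $|S|!$); your version is slightly cleaner in that it yields a finite quotient directly, making the appeal to Schur's theorem immediate.
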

\begin{proof} Suppose for a contradiction that $u $ is a  nontrivial  unit in $K[G]$ such that  $T^u= T$. Let $S:=\supp(u)$. Let $a \in S$;  since  $a^{-1} u $ is also a nontrivial unit,   we   may suppose that $e \in S$. Let $n= |S| !$.    \begin{claim} Let $g \in G$.  Then $g^n $ centralises  the subgroup $V= \la S \ra$ of $T$.  \end{claim}
\n By hypothesis that $u$ normalises $T$, we have that $h:= ugu^{-1} \in T$. Via the augmentation map (the  ring homomorphism  $K[G] \to K$ adding up the coefficients),  we see that $h \in G$. The map $x \mapsto  h^{-1}xg$, $x \in K[G]$,  fixes $u$, and hence permutes its support $S $. So the map $x \mapsto  h^{-n}xg^n$ fixes $S$ pointwise. Since $e \in S$, $h^n= g^n$. So $g^{-n}wg^n = w$ for each $w \in V$, as claimed. It follows that the f.g.\ group $V/Z(V)$ has exponent $n$.  If $V$ is abelian then  the  group ring $K[V]$ has only trivial units because $V $ is orderable.  So $V$ is nonabelian, contrary to hypothesis on $G$.

If $G$ is solvable then  $V/Z(V)$ is solvable;  this implies it  is finite (by an easy induction on its  solvability length, and using that $V$ is finitely generated). By a result of Schur, this implies that $V'$ is finite, and hence trivial because $G$ is torsion-free.   So again $V$ is abelian. \end{proof}

\begin{remark} It is   difficult to give examples of non-abelian  f.g.\  groups which are torsion-free and 
centre-by-finite exponent, which suggests that    Proposition~\ref{prop: noninvariant} holds for lots of  groups. Adjan~\cite{Adjan:71} provided  the first example, where the centre is cyclic  (and the quotient by the centre  is    a free Burnside group). \end{remark}
We next show that the   group $K^\times P$ of trivial units  is parameter definable in  $K[P]^\times$,  for any field $K$.  As before, we use the notation from Gardam~\cite{Gardam:21}. In particular $a,b$ are generators of $P$, and $x=a^2, y= b^2, z=(ab)^2$ are generators of the  group  $N\cong \ZZ^3$ that has index $4$ in $P$. We use $a,b$ as parameters. The defining formula is a disjunction of conjunctions of equations.
\begin{prop}[Gardam] Let $u \in K[P]^\times$. Then 
	$u \in K^\times P$ $\LR$ 
	
	\hfill gy at least one of $u, ua, ub$,  $uab$   commutes with $a^2$, $b^2$, and $(ab)^2$. \end{prop}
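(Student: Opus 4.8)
The plan is to work relative to the index-$4$ normal subgroup $N=\langle x,y,z\rangle\cong\ZZ^3$ of $P$, using that $\{1,a,b,ab\}$ is a transversal of $N$ in $P$, so that every $w\in K[P]$ has a unique normal form $w=p+qa+rb+sab$ with $p,q,r,s\in K[N]$. The statement then reduces to understanding the centraliser of $N$ inside $K[P]$, together with the fact recalled above that an element of $K[H]$ which is a unit in $K[G]$ is already a unit in $K[H]$.

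\emph{Forward implication.} Since $P=N\sqcup Na\sqcup Nb\sqcup Nab$, a trivial unit $u=kg$ (with $k\in K^\times$, $g\in P$) has $g$ in exactly one of these cosets. Multiplying $u$ on the right by the matching element of $\{1,a,b,ab\}$ lands in $K^\times N$, because $a^2=x$, $b^2=y$, $(ab)^2=z$ all lie in $N$: for instance if $g=nb$ with $n\in N$ then $ub=knb^2=kny\in K^\times N$, and everything in $K^\times N$ commutes with $x,y,z$ since $N$ is abelian. The remaining cases ($g\in N$, $g\in Na$, $g\in Nab$) use $u$, $ua$, $uab$ respectively and are identical.

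\emph{Converse.} The heart of the matter is the claim: if $v\in K[P]$ commutes with $x$ and with $y$, then $v\in K[N]$. To prove it, write $v=p+qa+rb+sab$ and expand $vx$ and $xv$ in normal form using the conjugation relations read off from the presentation --- namely $a$ centralises $x$ and inverts $y$, while $b$ inverts $x$ and centralises $y$, so $axa^{-1}=x$, $aya^{-1}=y^{-1}$, $bxb^{-1}=x^{-1}$, $byb^{-1}=y$, which also pins down the action of $ab$. Comparing the coefficients of $b$ and of $ab$ in $vx=xv$ gives $r(x-x^{-1})=0$ and $s(x-x^{-1})=0$; since $K[N]\cong K[\ZZ^3]$ is an integral domain and $x-x^{-1}\ne 0$, we get $r=s=0$, and then comparing the coefficient of $a$ in $vy=yv$ for $v=p+qa$ gives $q(y-y^{-1})=0$, so $q=0$ and $v=p\in K[N]$. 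Granting the claim: if one of $u,ua,ub,uab$, say $v=ug'$ with $g'\in\{1,a,b,ab\}$, commutes with $x,y,z$, then in particular it commutes with $x$ and $y$, hence $v\in K[N]$; being a unit of $K[P]$ lying in $K[N]$ with $N<P$, it is a unit of $K[N]$, and since $\ZZ^3$ is left-orderable the implications (1)$\to$(5) above give that $K[N]$ has only trivial units, so $v=kn$ with $k\in K^\times$, $n\in N$. Therefore $u=v(g')^{-1}=k\,n(g')^{-1}\in K^\times P$.

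I expect the only genuine work to be the bookkeeping inside the claim: extracting the conjugation action of $a,b$ on $x,y,z$ from the presentation and tracking which normal-form coefficient each commutation relation constrains. Once that is set up, the domain property of $K[\ZZ^3]$ finishes the argument immediately. (Note that $z$ is not strictly needed for the converse, since commuting with $x$ and $y$ alone already forces $v\in K[N]$; it makes the hypothesis more symmetric and plays its part in the forward direction via the coset $Nab$.)
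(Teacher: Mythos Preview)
Your proof is correct and follows essentially the same route as the paper's: reduce to the centraliser computation via the normal form $p+qa+rb+sab$, use the conjugation action of $a,b$ on $x,y$ to kill $r,s$ (from commuting with $x$) and then $q$ (from commuting with $y$), and conclude by the triviality of units in $K[\ZZ^3]$. Your added remark that $z$ is not needed for the converse is accurate; the paper's proof likewise uses only commutation with $a^2$ and $b^2$.
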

\begin{proof} \rapf Recall  that $P/N $ is the Klein 4 group generated by $aN $ and $bN$. So for each $u \in P$,  one of $u, ua, ub, uab$ is in $N$.  

\lapf We may assume w.l.o.g.\ that it is $ u$ that commutes with $a^2$, $b^2$ and $ (ab)^2$. Write it in the ``normal form" \bc $u=p + qa + rb + sab$ \ec where $p,q,r,s \in K[\ZZ^3]$.  Using that $ba^2 = a^{-2}b$,  one has
\begin{eqnarray*}  a^2 u & = &  pa^2 + qa^2 a+ ra^2b+ sa^2 ab \\  
 ua^2 &=&  pa^2 + qa^2 a+ ra^{-2}b+ sa^{-2}ab. \end{eqnarray*}

Since $u$ commutes with $a^2$,  all the coefficients are equal. So $r (a^2-a^{-2})=0$ and similar for $s$.  This implies $r=s=0$ since $K[\ZZ^3]$    is an integral domain. 
 Similarly, since $u$ commutes with  $  b^2$ we have $q = 0$.  Since the units in $K[\ZZ^3]$ are trivial, this implies that  $u \in K^\times P$. 
\end{proof}

\subsection{A verification of Gardam's  counterexample  in GAP}

Gardam verified his counterexample in the computer algebra system GAP, and this was attached as an ancillary file to the first version  on the arXiv of~\cite{Gardam:21}. There is also  a formalized \href{siddhartha-gadgil.github.io/automating-mathematics/posts/formalizing-gardam-disproof-kaplansky-conjecture}{proof} by \href{http://math.iisc.ernet.in/~gadgil/}{Siddhartha Gadgil} and Anand Rao Tadipatri in the proof assistant system ``lean".

The following GAP interaction was provided by Bettina Eick in Oct.\ 2022. Comments added by Andre Nies.

First we find the group  $P$  in the catalog.  It is a polycyclic group; such groups are given  by polycyclic   presentations (pcp).  
\begin{verbatim}
	gap> gg := List([1..219], x -> SpaceGroupPcpGroup(3,x));;
	gap> gg := Filtered(gg, IsTorsionFree);                 
	[ Pcp-group with orders [ 0, 0, 0 ], Pcp-group with orders [ 2, 0, 0, 0 ], 
	Pcp-group with orders [ 2, 0, 0, 0 ], Pcp-group with orders [ 2, 0, 0, 0 ], 
	Pcp-group with orders [ 2, 2, 0, 0, 0 ], 
	Pcp-group with orders [ 2, 2, 0, 0, 0 ], 
	Pcp-group with orders [ 2, 2, 0, 0, 0 ], 
	Pcp-group with orders [ 2, 2, 0, 0, 0 ], 
	Pcp-group with orders [ 3, 0, 0, 0 ], 
	Pcp-group with orders [ 3, 2, 0, 0, 0 ] ]
	gap> gg := Filtered(gg, x -> IdGroup(x/FittingSubgroup(x))=[4,2]);   
	%code for Klein 4 group
	[ Pcp-group with orders [ 2, 2, 0, 0, 0 ],    
	Pcp-group with orders [ 2, 2, 0, 0, 0 ], 
	Pcp-group with orders [ 2, 2, 0, 0, 0 ] ]
	gap> List(gg, AbelianInvariants);
	[ [ 4, 4 ], [ 0, 2, 2 ], [ 0, 4 ] ]. %C_4 x C_4,  
	gap> P := gg[1];
	Pcp-group with orders [ 2, 2, 0, 0, 0 ]
\end{verbatim}

Now we build the group ring $R$.

\begin{verbatim}
	gap> R := GroupRing(GF(2), P);                                        
	<algebra-with-one over GF(2), with 10 generators>. %
	gap> GeneratorsOfRing(R);
	[ (Z(2)^0)*id, (Z(2)^0)*g1*g5, (Z(2)^0)*g1, (Z(2)^0)*g2*g4, (Z(2)^0)*g2,    (Z(2)^0)*g3, (Z(2)^0)*g4, (Z(2)^0)*g5, (Z(2)^0)*g5^-1, (Z(2)^0)*g4^-1, 
	(Z(2)^0)*g3^-1, (Z(2)^0)*id ]
	% Z(2)^0 is the 1 of the ring
	
	gap> P;
	Pcp-group with orders [ 2, 2, 0, 0, 0 ]
	gap> g := Igs(P);   %induced  generating system
	[ g1, g2, g3, g4, g5 ]
	gap> a := g[1];
	g1
	gap> b := g[2];
	g2
	gap> a^2;
	g5^-1
	gap> b^2;
	g4^-1
	gap> (a*b)^2;
	g3^-1
	
\end{verbatim}

Next identify   group elements $a,b,x,y,z$ with their images in  $R$, where $x= a^2, y=b^2, z= (ab)^2$, and hence the normal abelian subgroup  $N$ of $P$  is freely generated by $x,y,z$. Note that \begin{verbatim} (Z(2)^0) \end{verbatim} is the internal notation for the multiplicative identity of $\mathbb F_2$.

\begin{verbatim}
	
	gap> a := a*One(R);  
	(Z(2)^0)*g1
	gap> b := b*One(R);
	(Z(2)^0)*g2
	gap> x := a^2;
	(Z(2)^0)*g5^-1
	gap> y := b^2;
	(Z(2)^0)*g4^-1
	gap> z := (a*b)^2;
	(Z(2)^0)*g3^-1
	
\end{verbatim}

One  writes the unit $w$ in this normal form by declaring these coefficients.
\begin{verbatim}
	gap> p := (One(R)+x)*(One(R)+y)*(One(R)+z^-1);
	(Z(2)^0)*g3+(Z(2)^0)*g3*g5^-1+(Z(2)^0)*g3*g4^-1+(Z(2)^0)*g3*g4^-1*g5^-1+(Z(2)^
	0)*id+(Z(2)^0)*g5^-1+(Z(2)^0)*g4^-1+(Z(2)^0)*g4^-1*g5^-1
	gap> q := x^-1*y^-1+ x+ y^-1*z + z;
	(Z(2)^0)*g4*g5+(Z(2)^0)*g5^-1+(Z(2)^0)*g3^-1*g4+(Z(2)^0)*g3^-1
	gap> r := One(R) + x + y^-1*z+ x*y*z;
	(Z(2)^0)*id+(Z(2)^0)*g5^-1+(Z(2)^0)*g3^-1*g4+(Z(2)^0)*g3^-1*g4^-1*g5^-1
	gap> s := One(R) + (x+x^-1+y+y^-1)*z^-1;
	(Z(2)^0)*g3*g4+(Z(2)^0)*g3*g5+(Z(2)^0)*g3*g5^-1+(Z(2)^0)*g3*g4^-1+(Z(2)^0)*id
	gap> w := p + q*a + r*b + s*a*b;
	(Z(2)^0)*g1*g2*g3*g4+(Z(2)^0)*g1*g2*g3*g5+(Z(2)^0)*g1*g2*g3*g5^-1+(Z(2)^
	0)*g1*g2*g3*g4^-1+(Z(2)^0)*g1*g2+(Z(2)^0)*g1*g3+(Z(2)^0)*g1*g3*g4^-1+(Z(2)^
	0)*g1*g5^-1+(Z(2)^0)*g1*g4^-1*g5+(Z(2)^0)*g2*g3*g4+(Z(2)^0)*g2*g3*g4^-1*g5+(
	Z(2)^0)*g2*g5+(Z(2)^0)*g2+(Z(2)^0)*g3+(Z(2)^0)*g3*g5^-1+(Z(2)^0)*g3*g4^-1+(
	Z(2)^0)*g3*g4^-1*g5^-1+(Z(2)^0)*id+(Z(2)^0)*g5^-1+(Z(2)^0)*g4^-1+(Z(2)^0)*g4^
	-1*g5^-1
\end{verbatim} 
The expression for the  inverse  $v$ of $w$, given below, is closely related to the expression for $w$. We merely need to modify the coefficients.  For instance, the effect of conjugating $p$ by $a$ is flipping  the signs of the exponents of $y$ and $z$. 
\begin{verbatim}
	gap> v := x^-1 * a^-1*p*a + x^-1*q*a + y^-1*r*b+ z^-1*a^-1*s*a*a*b;
	(Z(2)^0)*g1*g2*g3+(Z(2)^0)*g1*g2*g4+(Z(2)^0)*g1*g2*g5+(Z(2)^0)*g1*g2*g5^-1+(
	Z(2)^0)*g1*g2*g4^-1+(Z(2)^0)*g1*g3*g5+(Z(2)^0)*g1*g3*g4^-1*g5+(Z(2)^0)*g1+(
	Z(2)^0)*g1*g4^-1*g5^2+(Z(2)^0)*g2*g3*g4^2+(Z(2)^0)*g2*g3*g5+(Z(2)^
	0)*g2*g4*g5+(Z(2)^0)*g2*g4+(Z(2)^0)*g4*g5+(Z(2)^0)*g4+(Z(2)^0)*g5+(Z(2)^
	0)*id+(Z(2)^0)*g3^-1*g4*g5+(Z(2)^0)*g3^-1*g4+(Z(2)^0)*g3^-1*g5+(Z(2)^0)*g3^-1
\end{verbatim}

Finally, we are ready to try it out:

\begin{verbatim}
	gap> w*v;
	(Z(2)^0)*id
\end{verbatim}
This says that $wv = 1$, and so GAP has verified Gardam's result refuting  the 82-year old conjecture of Higman.


         \section{Some undecidable properties of computable profinite, and discrete  groups}
We show, without any claim of originality,  that basic properties of computable abelian groups are undecidable, both in the profinite and in the discrete setting. For profinite groups, being pro-$p$ is not decidable, and being topologically generated by one element is also not decidable. (In fact, this is already shown by finite groups.) 

For infinite discrete groups, being finite is not decidable.

We begin by reviewing the relevant definitions.
The following originates in work of Mal'cev and Rabin (ind.) from the 1960s.
\begin{definition} \label{compStr}  A \emph{computable   structure}  is a structure such that the domain is a 
 computable  set   $D\sub  \NN$,  and the    functions and relations of the structure are computable.
 \end{definition} 
  \begin{example}  \label{ex:discrete}
 (a)   For each $k \ge 1$, the group  $GL_k(\mathbb Q)$  is computably presentable. To obtain   a  computable copy,  one fixes an algorithmic encoding of the rational $k \times k$ matrices by natural numbers, and lets the domain  $D$ be the computable set of numbers that encode a matrix with nonzero determinant. Since the encoding is algorithmic, the domain and the matrix operations are computable.

\n (b) It is not hard to verify that an $n$-generated group  $G$ has a  computable copy  $\LR$ if 
  its     word problem is decidable. For the implication  ``$\LA$", assume some effective bijection $F_n \leftrightarrow \NN$. Suppose $G= F_n /N$ for a computable normal subgroup $N$. Write $\sim_N$ for $uw^{-1} \in N$, and note that $\sim_N$ is a computable equivalence relation. Let $D$ be the set of least elements in the $\sim_N$-classes, which is a computable set. To show the binary group operation is computable on $D$, given $r, s \in D$, we let $f(r,s)$ be the element $t$ of $D$ such that $rs\sim_N t$. Similarly for the inverse.
%
  \end{example}

  \begin{definition}[Smith~\cite{Smith:81} and la Roche~\cite{LaRoche:81}]   A profinite group $G$ is computable    if $G= \varprojlim_t (A_t, \psi_t)$ for a computable diagram  $ (A_t, \psi_t)\sN t$ of  finite groups and epimorphisms~$\psi_t \colon A_t \to A_{t-1}$ ($t>0$).  \end{definition}
  
  Let $C_r$ denote the cyclic group of size $r$.   

\begin{prop} For profinite groups, \bi \item[(i)] being pro-$p$ is not decidable 
\item[(ii)]  being topologically generated by one element is   not decidable. \ei \end{prop}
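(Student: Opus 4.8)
The plan is to reduce both statements to a decision problem about finite groups that is undecidable because it encodes the halting problem (or, more directly, the problem whether a given Turing machine halts). The key observation is the following: since a computable profinite group is presented by a computable inverse system $(A_t,\psi_t)$ of \emph{finite} groups, if we are allowed to make the system eventually constant depending on the behaviour of a Turing machine, then the limit is just a finite group, and we get to ``choose'' which finite group depending on a non-computable event. So the strategy is: take a computable function $e\mapsto \langle M_e\rangle$ enumerating Turing machines, and for each $e$ build a computable diagram whose inverse limit is $C_p$ if $M_e$ never halts, and is (say) $C_p\times C_q$ for a prime $q\neq p$, or $C_q$, if $M_e$ halts. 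Then ``pro-$p$'' (resp.\ ``topologically generated by one element'') of the resulting profinite group would decide the halting problem, contradiction. As the parenthetical remark in the statement indicates, one can even arrange the limit groups to be finite, so the undecidability already occurs within finite groups presented as trivial inverse systems.

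Concretely, for part (i): fix primes $p\neq q$. Given $e$, define $A_t = C_p$ with $\psi_t=\id$ for as long as $M_e$ has not halted within $t$ steps; if $M_e$ halts at stage $t_0$, then for $t\ge t_0$ set $A_t = C_p\times C_q$ and let $\psi_{t_0}\colon C_p\times C_q\to C_p$ be the projection (and $\psi_t=\id$ afterwards). This is a computable diagram uniformly in $e$: to compute $A_t$ and $\psi_t$ we simply run $M_e$ for $t$ steps. The inverse limit is $C_p$ (pro-$p$) iff $M_e$ never halts, and is $C_p\times C_q$ (not pro-$p$) iff $M_e$ halts. So a decision procedure for ``pro-$p$'' would solve the complement of the halting problem. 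For part (ii): use instead $A_t = C_{p}$ while $M_e$ has not halted, and if $M_e$ halts at $t_0$ switch to $A_t = C_p\times C_q$ with $\psi_{t_0}$ the projection; the limit is cyclic (topologically one-generated, being finite cyclic of order $p$) iff $M_e$ never halts, and is $C_p\times C_q$, still one-generated since $p,q$ are distinct primes --- so this particular encoding does not separate the property. Instead, to get ``topologically generated by one element'', use a non-cyclic target: let the ``halting'' limit be $C_p\times C_p$, reached via $\psi_{t_0}\colon C_p\times C_p \to C_p$ a projection, while the ``non-halting'' limit is $C_p$. Then the limit is one-generated iff $M_e$ does not halt, and needs two generators iff $M_e$ halts. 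A decision procedure for ``topologically generated by one element'' would again decide the halting problem.

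The routine verifications to include are: (a) that the described diagrams $(A_t,\psi_t)$ are genuinely computable in the sense of the Smith--la Roche definition --- this is immediate since all data is obtained by running $M_e$ for a bounded number of steps; (b) that the inverse limit is computed correctly in each case --- an eventually-constant inverse system with surjections has limit equal to the eventual term, and in the non-halting case the system is literally constant at $C_p$; (c) that $C_p$ is pro-$p$ and topologically one-generated, while $C_p\times C_q$ is not pro-$p$ and $C_p\times C_p$ is not topologically one-generated. One should also note explicitly, as the statement promises, that since the limits are finite groups and the ``infinite'' part of the diagram is only there to stall, the same construction shows these properties are undecidable \emph{already for finite groups} given by (trivial, eventually constant) inverse systems --- equivalently, there is no algorithm that, given a computable presentation of a finite profinite group, decides whether it is pro-$p$ or whether it is cyclic.

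I do not expect any genuine obstacle here; the only point requiring mild care is bookkeeping the index shift in $\psi_t$ (the definition has $\psi_t\colon A_t\to A_{t-1}$, so the ``switch'' epimorphism must be placed at the correct index, and all other maps set to the identity), and making sure the diagram is presented as a single computable function of $(e,t)$ so that the reduction from the halting problem is uniform. If one wants the cleanest statement, one can phrase the reduction as: the set $\{e : \varprojlim_t(A_t^{(e)},\psi_t^{(e)}) \text{ is pro-}p\}$ is $\Pi^0_1$-complete (it equals $\{e : M_e \text{ does not halt}\}$), hence not computable; similarly for one-generation. That makes the undecidability claim quantitative, though for the proposition as stated it suffices to derive the contradiction with the undecidability of the halting problem.
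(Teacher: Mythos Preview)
Your proposal is correct and follows essentially the same approach as the paper: build, uniformly in an index $e$ (the paper uses $n$), an eventually-constant computable inverse system that stays at $C_p$ unless the machine halts, in which case it switches to a non-pro-$p$ group for (i) (the paper uses $C_{pq}$, you use $C_p\times C_q$; these are isomorphic) and to $C_p\times C_p$ for (ii), with an epimorphism onto $C_p$ at the switch point (the paper uses the addition map, you use a projection; either works). Your self-correction in (ii), noticing that $C_p\times C_q$ is still cyclic and so does not separate the property, and your $\Pi^0_1$-completeness remark are both on target.
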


\begin{proof} For each $n$ we build a group $G_n$ which has the property if $n$ is not in the halting problem $K$. We have $K = \bigcup_tK_t$ where $K_t$ is the set of elements enumerated by the end of stage $t$ (we may assume that  $K_1= \ES$).
   
Fix $n$. We build a diagram  of finite groups $(A_t)\sN t$  with natural maps $\phi_k\colon A_{t+1}\to A_t$  and let $G_n$ be   the inverse limit.   

(i) Fix a prime $q\neq p$. Let $A_0 = C_p$. If $n \not \in K_{t+1} $ let $A_{t+1} = A_t$ and $\phi_t $ the identity. If $n \in K_{t+1} - K_t$ let $ A_t  = C_{pq}$ and $\phi_t $ the natural map $ C_{pq} \to C_p$.  If $n   \in K_{t} $ let $A_{t+1} = A_t$ and $\phi_t $ the identity.

(ii)  This is similar,  replacing  $C_{pq}$ by $C_p \times C_p$. Also, if   $n \in K_{t+1} - K_t$,   let $\phi_t$ be  the addition map  on $C_p$, otherwise, take the identity maps as before.
\end{proof}

\begin{prop} For infinite discrete groups, being finite is not decidable. \end{prop}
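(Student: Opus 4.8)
The plan is to many-one reduce the halting problem $K$ to the finiteness problem for computable discrete groups: I will build, uniformly in $n$, a computable abelian group $G_n$ that is finite if and only if $n\in K$. Since $K$ is not computable, this shows that no algorithm can decide, from a presentation of a computable group, whether the group is finite.

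Fix the enumeration $K=\bigcup_t K_t$ as above (with $K_1=\ES$). For each $n$ set $D_n=\set{t}{n\notin K_t}$. Since $n\in K_t$ is decidable uniformly in $(n,t)$, the set $D_n$ is computable uniformly in $n$; and because $K_s\sub K_t$ for $s\le t$, $D_n$ is an initial segment of the set of stages — it is the whole set of stages when $n\notin K$, and a finite set when $n$ eventually enters $K$. Now let $G_n=\bigoplus_{t\in D_n} C_2$, the direct sum of copies of the two‑element group indexed by $D_n$ (equivalently, the direct limit of the finite subgroups $\bigoplus_{t\in D_n,\ t\le s} C_2$ under the coordinate inclusions, mirroring the inverse‑limit construction in the profinite case). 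Its elements are the finitely supported functions $D_n\to C_2$; coding such a function by the natural number whose binary digits record its support turns the underlying set into a computable subset of $\NN$ — either all of $\NN$, or an initial segment $\{0,\dots,2^{|D_n|}-1\}$ — and the group operation becomes bitwise addition modulo $2$, which is computable. All of this is uniform in $n$, so $(G_n)\sN n$ is a uniformly computable sequence of computable structures in the sense of Definition~\ref{compStr}, and by construction $G_n$ is countably infinite when $n\notin K$ and finite when $n\in K$.

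Finally, if finiteness of computable discrete groups were decidable, then applying the decision procedure to the presentations of the $G_n$ would compute $K$, a contradiction. I do not expect a genuine obstacle; the only point requiring a moment's care is checking that the chosen encoding really yields a computable structure uniformly in $n$ — in particular that the domain is uniformly computable — which is immediate from the fact that $D_n$ is uniformly computable. Note also that each $G_n$ is abelian, indeed of exponent $2$, so finiteness is already undecidable for computable abelian groups; and, unlike in the profinite setting, finite groups cannot serve as witnesses here, since the hard instances ($n\notin K$) are infinite.
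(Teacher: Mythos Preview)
Your argument is correct and follows the same overall strategy as the paper: many-one reduce the halting problem by building, uniformly in $n$, a computable abelian group $G_n$ that is finite iff $n\in K$. The implementations differ. The paper grows a copy of $\ZZ$ on $\{-t+1,\ldots,t-1\}$ stage by stage; if $n$ enters $K$ at stage $t$ it freezes the domain and completes the partially declared addition to a copy of $C_{2t-1}$, which needs the small check that the sums already fixed (integer addition for $|a|,|b|\le (t-1)/2$) are consistent with addition modulo $2t-1$. You instead take $G_n=\bigoplus_{t\in D_n}C_2$ with bitwise XOR as the operation, so the operation is fixed once and for all and only the domain varies with $n$; nothing has to be redefined when $n$ enters $K$. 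Your version is a bit cleaner for that reason and lands in groups of exponent $2$, while the paper's version shows that the result already holds among (finite and infinite) cyclic groups.
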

\begin{proof} We build a computable presentation of $G_n$, for convenience on a computable set $D \sub \ZZ$ (which is an interval of the form $[-n, n]$, or all of $\ZZ$). 

To build   $D$, at each stage $t$ we decide whether $t$ and $-t$ are elements.  Similar for the computable operations on elements that have been decided to be in $D$.

If $n \not \in K_{t+1}$ declare that $-t$ and $t$ are in $D$.  Declare addition and inverse of elements $a,b \in D$  such that $|a|, |b| \le t/2$ if not already declared, according to addition on $\ZZ$.

If $n \in K_{t+1} $  then declare that  $-t$ and $t$ are not in $D$.  If $t$ is least such, also declare addition of the remaining elements in $D$ so as to making a computable copy of $C_{2t-1}$ on the final $D= \{-t+1, \ldots, t-1\}$. 

\end{proof}

   \section{Duality between    locally Roelcke precompact  groups and meet groupoids }

Andr\'e gave two seminar talks at the Department of Mathematics at the University of M\"unster   in May/June.  They were centred on using countable   ``approximation  structures" to describe certain uncountable totally disconnected topological groups. The following notes form a polished version of  the material in the first of these  two talks.

 All topological groups will be countably based, and all isomorphisms between them  will be topological.  One says that a topological group $G$ is \emph{non-Archimedean} if  the neutral element has a neighbourhood basis consisting of open subgroups. It is well known \cite[Th.\ 1.5.1]{Becker.Kechris:96} that this property is equivalent to being isomorphic to a closed subgroup of $S_\infty$ (the group of permutations of $\NN$), which means  the automorphism group of a structure  with domain $\NN$.

 The first talk was on oligomorphic groups, the second on locally compact groups. In the former, the approximation structures are used to derive an upper bound on the Borel complexity of the isomorphism relation. The second uses the approximation structure to study what it means for a t.d.l.c. group to be computable.  This is recorded as an    extended version \cref{P2} below,  based on a talk series at a  Newcastle conference in October.

  The   following setting has  been discussed  in detail in Kechris et al.\   \cite{Kechris.Nies.etal:18}. The closed subset of a Polish space $X$ form a standard Borel space, called the Effros space. If $X$ underlies a Polish group, the closed subgroups form a Borel set in the Effros space (see e.g. \cite[Lemma 2.5]{Kechris.Nies.etal:18} for detail).   One can gauge the complexity of the isomorphism relation on subclasses of the closed subgroups of  $S_\infty$ via the usual Borel reducibility $\le_B$.  The following classes have been (at least to some degree) studied under this aspect.

\begin{definition} A closed subgroup $G$ of $S_\infty$ is called \begin{enumerate} \item oligomorphic: for each $n$ only finitely many $n$-orbits
\item quasi-oligomorphic: isomorphic to an oligomorphic group (but the canonical evaluation action   on $\NN$ may fail to be oligomorphic)
\item Roelcke pre-compact (R.p): for each open subgroup $U$, there is a finite subset $\aaa$ such that  $G= U \aaa U$; that is, $U$ has finitely many double cosets
\item locally compact (t.d.l.c.): the usual definition for the underlying topology; equivalently,  there is a compact open subgroup  (van Dantzig's theorem)
\item locally R.p: there is a R.p.\ open subgroup. \end{enumerate}\end{definition}

For background on (locally) Roelcke precompact groups see Rosendal~\cite{Rosendal:21} and also   Zielinski \cite{Zielinski:21}. This set of concepts works  in fact in a  setting much more general than closed subgroups of  $S_\infty$.  The Roelcke uniformity in a topological group $G$ is given by the basic  entourages $\{\la g,h\ra \colon  g \in VhV\}$ where $V$ ranges over identity neighbourhoods. A subset $A$ of a Polish group $G$ is Roelcke precompact if it is totally bounded in the Roelcke uniformity. That is, for each entourage $V$ there is a finite set $F $ such that  $A \sub \bigcup_{x \in F} B_V(x)$. And of course $G$ is locally R.p.\ if it has  a R.p.\ nbhd of the identity. Even for non-Archimedean groups,  our definition of locally R.p.\ might be a bit more restricted than the one given by Rosendal and Zielinski. The two  are equivalent if there is  a  version,  in the more general locally Roelcke precompact setting, of van Dantzig's theorem for t.d.l.c.\ groups. To my knowledge   no-one has obtained this so far.

\begin{figure}[h] \label{fig:diagram}

\[ \xymatrix { & \text{locally Roelcke precompact} & \\
\text{Roelcke precompact}\ar[ur] & & \text{locally compact}\ar[ul] \\
\text{quasi-oligomorphic}\ar[u] & \text{compact}\ar[ul]\ar[ur] & \text{discrete}\ar[u] \\
\text{oligomorphic}\ar[u] &&  }
\]

%

%
   
\caption{\small  Some classes  of closed subgroups of $S_\infty$, with their inclusion relations.}
\end{figure}
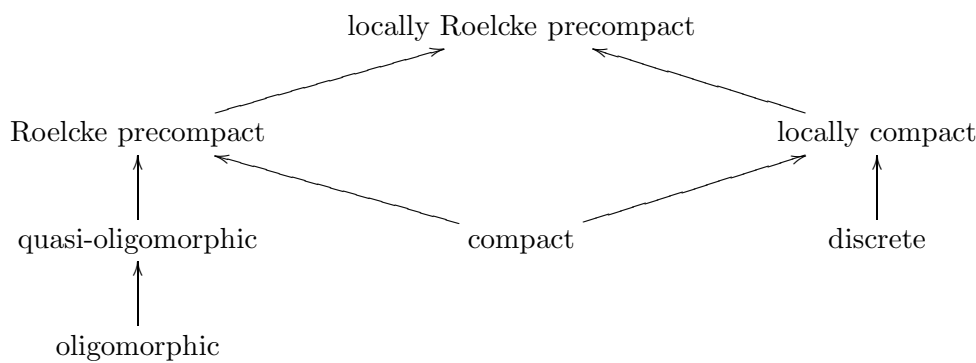

Since all classes are Borel, each inclusion in the diagram implies Borel reducibility between the isomorphism relations. Even for the locally R.p.\ case, an upper bound on the complexity of the isomorphism relation is graph isomorphism GI. This is shown by suitably extending \cite[Thm.\ 3.1]{Kechris.Nies.etal:18}. The idea is that $G$ acts faithfully from the left on a countable set of open cosets. For [quasi]-oligomorphic $G$, just take all open cosets; in general, take the R.p.\ open cosets. 

For all classes  $\+ C$ except [quasi-]oligomorphic, this bound is sharp by a profinite version (\cite{Kechris.Nies.etal:18}, last section) of the Mekler construction, which codes certain ``nice" countable graphs into countable nilpotent-2 groups of exponent $p$. For  [quasi-]oligomorphic, there is a tighter upper bound: the isomorphism relation is Borel reducible to a Borel equivalence relation with all classes countable (that is, below the orbit equivalence relation given by a Borel action of a countable group).

 \subsection{Meet groupoids of locally R.p.\ groups
 }  \label{ss: mg} Intuitively, the notion of a  {groupoid} generalizes the notion of a group by allowing that the binary operation is partial.  A groupoid   is  given by a domain $\+ W$ on which     a unary operation $(.)^{-1}$ and a partial binary operation, denoted  by ``$\cdot $", are defined. These operations satisfy the following conditions:
 \bi \item[(a)] associativity  $(A \cdot B)\cdot C= 
 A \cdot (B\cdot C)$,  with either both sides or no side defined (and so the parentheses can be omitted in products);  \item[(b)]  $A\cdot A^{-1}$ and $A^{-1}\cdot A$ are always defined; \item[(c)] if $A\cdot B$ is defined then $A\cdot B\cdot B^{-1}=A$ and $A^{-1}\cdot A\cdot  B =B$.\ei

It follows from (c) that a groupoid satisfies the left and right cancellation laws. One says that an element $U\in \+ W$ is \emph{idempotent} if $U\cdot U =U$. Clearly this implies that $U= U \cdot U^{-1}= U^{-1} \cdot U$ and so $U= U^{-1}$ by cancellation. Conversely, by (c) every element of the form $A\cdot A^{-1}$ or $A^{-1}\cdot A$ is idempotent.

  Note that if $A= Ua =aV$ and $B= Vb= bW$ for R.p.\ open subgroups $U,V,W$ then $AB= Uab= abW$.  Recall that a groupoid is equivalent to a category where each morphism has an inverse. In category notation, we write this now as  

\bc $U \stackrel A \to V \stackrel B \to W$  implies  $U \stackrel{AB} \to W$. \ec

Furthermore, if $C,D$ are left cosets of $U,V$, respectively, and $C\cap D \neq \ES$, then pick $a \in C \cap D$. We have  $C \cap D= a(U \cap V)$.  This motivates the next definition.

 \begin{definition} \label{def:MeetGroupoid} A \emph{meet groupoid} is a groupoid  $(\+ W, \cdot , {(.)}^{-1})$ that is also a meet semilattice  $(\+ W, \cap  ,\ES)$ of which  $\ES$ is the  least element.    
It satisfies the conditions  that  $\ES^{-1} = \ES = \ES \cdot \ES$,   that $\ES \cdot A$ and $A \cdot \ES$ are undefined for each $A \neq \ES$,  that $U  \cap V \neq \ES$ for idempotents $U,V$ such that $U,V \neq \ES$,
and that the groupoid operations are monotonic: writing $A \sub B \LR A\cap B = A$,
 
 \bi \item[(d)] $A \sub B \LR A^{-1} \sub B^{-1}$, and
 
\item[(e)]   if  $A_i\cdot B_i$ are defined ($i= 0,1$) and $A_0 \cap A_1 \neq \ES \neq B_0 \cap B_1$, then \bc $(A_0  \cap A_1)\cdot (B_0 \cap B_1) =  A_0 \cdot  B_0 \cap A_1 \cdot B_1 $ \ec

%
 \ei

Monotonicity of $\cdot$ follows from (e):  if  $A_i\cdot B_i$ are defined ($i= 0,1$) and $A_0 \sub A_1, B_0 \sub B_1 $,
 then  $A_0 \cdot B_0 \sub A_1 \cdot B_1$.     
 
  Note that (e) holds for the R.p.\  open cosets of a group: Let $A_i$ be a right coset of a subgroup $U_i$ and a left coset of subgroup $V_i$,  so that  $B_i$ is  a right coset of $V_i$ by hypothesis. Then  as noted  above, $A_0 \cap A_1 $ is a  right  coset of $U_0 \cap U_1$, and $B_0 \cap B_1$ a right coset of $V_0 \cap V_1$, so the left hand side is defined.   Clearly the left side is contained in the right hand side by monotonicity. The right hand side is also a right coset of $U_0 \cap U_1$, so they must be equal.

\begin{fact}  If $U$ and $V$ are idempotent,  then so is $S:=U \cap V$. \end{fact}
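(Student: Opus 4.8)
The plan is to obtain idempotence of $S$ directly from the monotonicity axiom (e), applied to the two trivially defined products $U\cdot U = U$ and $V\cdot V = V$, with all four arguments chosen from $\{U,V\}$.

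First I would dispose of the degenerate case. If $U=\ES$ or $V=\ES$, then $S=U\cap V=\ES$ because $\ES$ is the least element of the meet semilattice, and $\ES$ is idempotent by the stipulation $\ES=\ES\cdot\ES$ in the definition of a meet groupoid. So from now on assume $U,V\neq\ES$. Then, since $U$ and $V$ are idempotent and nonzero, the clause of \cref{def:MeetGroupoid} asserting that $U\cap V\neq\ES$ for nonzero idempotents $U,V$ yields $S=U\cap V\neq\ES$. This is exactly the hypothesis one needs in order to be allowed to invoke (e).

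Now I would apply axiom (e) with $A_0=B_0=U$ and $A_1=B_1=V$. The products $A_0\cdot B_0=U\cdot U=U$ and $A_1\cdot B_1=V\cdot V=V$ are defined by idempotence, and $A_0\cap A_1=B_0\cap B_1=U\cap V=S\neq\ES$, so all hypotheses of (e) hold. Its conclusion then reads
\[ S\cdot S=(A_0\cap A_1)\cdot(B_0\cap B_1)=(A_0\cdot B_0)\cap(A_1\cdot B_1)=U\cap V=S, \]
and in particular the left-hand side is defined (being equal to the meet on the right). Thus $S\cdot S=S$, i.e.\ $S$ is idempotent.

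There is essentially no obstacle here beyond bookkeeping: the only point requiring care is that (e) may only be used when the relevant meets are nonzero, and the sole purpose of invoking the ``$U\cap V\neq\ES$ for nonzero idempotents'' clause is to license that use for the meet $S=U\cap V$. Everything else is substitution into (e); no computation is involved.
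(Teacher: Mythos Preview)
Your proof is correct and is exactly the paper's primary argument: apply axiom (e) with $A_0=B_0=U$ and $A_1=B_1=V$ (you add the routine care of handling the degenerate case and checking the nonemptiness hypothesis of (e)). The paper also sketches a second argument avoiding (e) --- show $S=S^{-1}$ via order-preservation of inversion, deduce $S\cdot S$ is defined, get $S\cdot S\subseteq S$ from monotonicity, and cancel to obtain $S\subseteq S\cdot S$ --- but your route is the one it leads with.
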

\begin{proof}   Just let $U= A_0= B_0$ and $V= A_1 = B_1$  in  (e). In fact this also follows without (e).    Since inversion is an order isomorphism, we have $S = S^{-1}$. In particular, $S \cdot S$ is defined. Since $S \sub U,V$, we have $S \cdot S \sub U\cdot U \cap V \cdot V$ by monotonicity  of $\cdot$, and hence $S \cdot S \sub S$. This implies $S \cdot S \cdot S^{-1} \sub S \cdot S^{-1}$ and hence $S \sub S \cdot S$.
\end{proof}  
 Given meet groupoids $\+ W_0, \+ W_1$, a bijection $h \colon \+ W_0 \to \+ W_1$ is an \emph{isomorphism} if it preserves the three operations. 
  Given a meet groupoid $\+ W$, the letters $A,B, C$ will  range over  elements of $\+ W$, and the letters  $U,V,W$ will range  over idempotents.  \end{definition}

\begin{definition} Let $G \le_c S_\infty$ be locally Roelcke precompact (in the sense that there is an open Roelcke precompact subgroup). Define a meet groupoid $\+W(G)$ as follows. The domain consists of all R.p.\ open cosets of $G$, together with $\ES$. The inverse $A \to A^{-1}$ is as usual. For $A, B \in \+ W(G)$,   let $A\cdot B$ be the usual product in case that $A $ is a left coset of some subgroup $V$, and $B$ is a right coset of $V$ (so that $AB$ is indeed a R.p.\ open coset). The operation $\cap $ is the usual intersection operation. 
\end{definition}

\begin{remark} Using the somewhat exotic notion of locally R.p.\ groups is necessary to define a common framework for the less exotic classes in Figure~\ref{fig:diagram}. For [quasi]-oligomorphic groups, every open coset is R.p., so one can ignore this restriction to R.p. For t.d.l.c.\ groups $G$, R.p.\ open cosets coincide with the  open compact ones. So $\+W(G)$ is simply the meet groupoid on compact open cosets. 
\end{remark}

\subsection{Borel duality} 
Recall that all   classes $\+ C$ in the diagram are Borel.  Using an old result of Lusin-Novikov one can ensure the meet groupoid has domain $\NN$ as described in the proof of \cite[Thm.\ 3.1]{Kechris.Nies.etal:18},  and  then view $\+ W$ as a Borel operator  from $\+ C$ to a suitable set of   structures with domain $\NN$.  For all the classes except oligomorphic, the  following map $\+ G$ is an inverse  of $\+ W$ up to isomorphism.  
Given a meet groupoid $M$ with domain $\NN$, let 

\medskip
$\+ G(M)=\{ p \in S_\infty \colon \, p \text{ preserves inclusion } \land $ 

\hfill $ p(A \cdot B) = p(A ) \cdot B \text{ whenever }   A \cdot B \text{ is defined } \}$. 

\medskip
 
So we have the duality

 a Borel class of groups $\xymatrix{{\mathcal C} \ar@/^/^{\+ W}[r]& {\mathcal D}\ar@/^/^{ \+ G}[l] }$   a Borel class of meet groupoids on $\NN$.

Duality    means that  for each $G \in \+ C$ and  $M \in \+ D$ \bc  $\+G(\+ W(G) ) \cong G$  and   $\+ W( \+ G(M)) \cong M$.   \ec
 $\+ D$  is  the closure under isomorphism of the range of $\+ W \uhr {\+ C}$, which  can be shown to be Borel. This is the domain of $\+ G $ which is also a Borel map.

 To show that  $\+G(\+ W(G) ) \cong G$  for each $G \in \+ C$ is not hard; essentially  this is done already  in \cite{Kechris.Nies.etal:18}.  That the closure  $\+ D$ of the range of $\+ W$ is Borel,  and showing the  dual condition  $\+ W( \+ G(M)) \cong M$, is much harder. Given a class $\+ C$, for membership of $M$  in $\+ D$ we need an extra Borel condition on $M$ that ensures  $\+ W( \+ G(M)) \cong M$. The problem is that there may be an open subgroup $\+ U$  in $\+ G(M)$ that isn't ``named" by an element of $M$. That is, $\+ U$ is not of the form $\{ p \colon \, p(U) = U\}$ for any $U \in M$. For an idempotent $U \in M$, let $L(U)= \{A \in M \colon AU= A\}$ be the set of left ``cosets". Similar define $R(U)$, the right cosets.    For the compact setting, say, one requires that $L(U) $ is always finite, and, if $L(U)= R(U)$ (i.e. $U$ is ``normal") and a set $\+ B \sub L(U)$ is closed under the groupoid operations, then there is an idempotent  $W \in M$ such that $A \in \+ B \lra A \sub W$  for each $A \in L(U)$. This is axiom CC in~\cite[Section~4]{LogicBlog:20}. Similar conditions appear to  determine the right $\+ D$ for the other classes; see  the axiom CLC for t.d.l.c. groups in~\cite[Section~4]{LogicBlog:20}, and~\cite{Nies.Schlicht.etal:21} for oligomorphic groups; note that for locally R.p.\ groups  this hasn't really been carried out in any detail. 

\subsection{Borel complexity of the isomorphism relation in the  oligomorphic case} Now let $\+ C$ be the class of oligomorphic groups.  
One can't use the  map $\+ G$ as above because the group $\+ G(M)$ is not necessarily oligomorphic even if $M = \+ W(G)$ for an oligomorphic group $G$. Instead, we define a Borel map $\wt {\+ G}$ which doesn't suffer from this flaw. One has to work harder, by only considering the action on one set $L(V)$ for a particularly nice idempotent $V$; for the detail see  \cite{Nies.Schlicht.etal:21}, which however works in the setting of ``coarse groups" which are countable structures with only ternary relation, and the map $\+  G$ yields   groups with domain the set of  certain filters on these coarse groups. (We speculate that if that  work was recast in the more recent  setting of meet groupoids, it would  shed  at least 5 pages.) $V$ is given as follows.

\begin{lemma} Each oligomorphic group $G$ has an open  subgroup  $V$ such that the natural left action $G \curvearrowright L(V)$ is oligomorphic and faithful.  Thus we get an homeomorphic embedding of $G$ into the symmetric group on $L(V)$.\end{lemma}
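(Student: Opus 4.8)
The plan is to realise $V$ as the pointwise stabiliser of a carefully chosen finite set. Since $G$ is oligomorphic it has only finitely many orbits on $\NN = \NN^1$; I would pick one point $a_1,\dots,a_k$ from each of these orbits, put $F = \{a_1,\dots,a_k\}$ and $\bar a = (a_1,\dots,a_k)$, and set $V := G_{(F)} = G_{\bar a}$, the pointwise stabiliser. This $V$ is an open subgroup of $G$ because the pointwise stabilisers of finite subsets of $\NN$ form a neighbourhood basis of the identity in $S_\infty$, hence in $G$. The left coset space $L(V) = \{gV : g\in G\}$ is then countable (as $V$ is open in the Polish group $G$), and by orbit--stabiliser it is $G$-equivariantly isomorphic to the orbit $G\bar a \subseteq \NN^k$ via $gV \mapsto g\bar a$.

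With this choice the two substantive properties come out quickly. Oligomorphicity: $(G\bar a)^n$ sits $G$-equivariantly inside $\NN^{kn}$, so the number of $G$-orbits on $L(V)^n \cong (G\bar a)^n$ is bounded by the number of $G$-orbits on $\NN^{kn}$, which is finite by hypothesis. Faithfulness: the kernel of $G \curvearrowright L(V)$ is $\bigcap_{g\in G} gVg^{-1} = \bigcap_{g\in G} G_{(gF)} = G_{(\bigcup_{g} gF)}$, and $\bigcup_{g\in G} gF = \NN$ precisely because $F$ meets every orbit, so the kernel is $G_{(\NN)} = \{1\}$. (When $G$ is infinite, some orbit on $\NN$ is infinite, so $L(V)$ is infinite as well and $\mathrm{Sym}(L(V)) \cong S_\infty$.)

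It remains to upgrade the faithful action to a \emph{topological} embedding $\phi : G \hookrightarrow \mathrm{Sym}(L(V))$. Continuity of $\phi$ is immediate, since the stabiliser in $G$ of each coset $gV$ is the open subgroup $gVg^{-1}$. To see that $\phi$ is a homeomorphism onto its image I would verify the standard criterion that the $\phi$-preimages of basic identity neighbourhoods of $\mathrm{Sym}(L(V))$ already form a neighbourhood basis of $1$ in $G$: the preimage of the pointwise stabiliser of $\{g_1V,\dots,g_mV\}$ is $\bigcap_{i} g_iVg_i^{-1} = G_{(g_1F \cup \dots \cup g_mF)}$, and as the $g_i$ range over $G$ the sets $g_1F\cup\dots\cup g_mF$ are cofinal (under $\subseteq$) among all finite subsets of $\NN$ --- again because $F$ meets every orbit, every point of $\NN$ lies in some translate $gF$. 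Hence these preimages run through a neighbourhood basis of $1$ in $G$, and a continuous injective homomorphism of topological groups with this property is automatically open onto its image, i.e.\ a topological embedding.

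I do not anticipate a real obstacle here; the only delicate point worth flagging is that the \emph{same} feature of $F$ --- that it hits every $1$-orbit of $G$ --- is what simultaneously forces the kernel of the action to be trivial \emph{and} forces the pulled-back basic neighbourhoods to be cofinal, so that the embedding is genuinely topological and not merely algebraic. Everything else is routine manipulation of stabiliser subgroups together with the orbit--stabiliser correspondence.
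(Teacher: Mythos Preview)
Your proposal is correct and follows exactly the paper's approach: the paper's sketch simply says ``Let $n_1,\dots,n_k$ represent the 1-orbits. Let $V$ be the pointwise stabiliser of $\{n_1,\dots,n_k\}$,'' and you have filled in the details (oligomorphicity via $L(V)^n\hookrightarrow\NN^{kn}$, faithfulness and openness via the fact that the $G$-translates of $F$ cover $\NN$) that the paper omits.
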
 
\begin{proof}[Sketch of proof] Let $n_1, \ldots, n_k$ represent the 1-orbits. Let $V$ be the pointwise stabiliser of $\{n_1, \ldots, n_k\}$. \end{proof}

For membership of a meet groupoid $M$ in $\+ D$, one condition (axiom) requires the existence of such a $V$. Given $M$ one can pick such a $V$ in a Borel way. Let $\wt {\+ G}(M)$ be the closed subgroup of $S_\infty$ given by the action on $L(V)$.

We now discuss how to obtain the upper bound on the Borel complexity of $\cong_\+ C$, the isomorphism relation on $\+ C$.  An equivalence relation $E$ on a Borel space $X$ is called \emph{essentially countable} if $E \le_B F$ for some Borel eqrel $F$ with all classes countable (for example,  isomorphism on f.g.\ groups). 

\begin{thm}[\cite{Nies.Schlicht.etal:21}] \label{thm:NSThaha}  The isomorphism relation between oligomorphic groups is essentially countable. \end{thm}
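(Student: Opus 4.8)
The plan is to push the problem, via the Borel duality between $\+W$ and $\+ G$ set up above, onto a countable structure attached to each oligomorphic group in such a way that isomorphism becomes visibly ``countably branching''. First I would use the duality to replace $\cong_{\+ C}$ by the isomorphism relation $\cong_{\+ D}$ on the Borel class $\+ D$ of meet groupoids arising from oligomorphic groups, and then, from a meet groupoid $M\in\+ D$, select in a Borel way the distinguished nice idempotent $V$ of the Lemma above (the pointwise stabiliser of a set of representatives of the $1$-orbits). The assignment $M\mapsto (M,V)\mapsto \wt{\+ G}(M)$, the oligomorphic permutation group given by the left action on $L(V)$, is then Borel, and — by the duality — $M_1\cong M_2$ holds precisely when $\wt{\+ G}(M_1)$ and $\wt{\+ G}(M_2)$ are topologically isomorphic.

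Next I would trade each group $\wt{\+ G}(M)$ for a countable structure it is the automorphism group of. Identifying $L(V)$ with $\NN$ (Lusin--Novikov), equip it with the relational structure $\+ H(M)$ in the fixed countable language $\{R_{n,k}\}_{n,k}$, where $R_{n,k}$ names the $k$-th orbit of the action on $n$-tuples (empty when there is no such orbit). Oligomorphy makes $\+ H(M)$ an $\aleph_0$-categorical homogeneous structure with $\Aut(\+ H(M))=\wt{\+ G}(M)$, and $M\mapsto\+ H(M)$ is Borel. By the theorem of Ahlbrandt and Ziegler, $\wt{\+ G}(M_1)$ and $\wt{\+ G}(M_2)$ are topologically isomorphic iff $\+ H(M_1)$ and $\+ H(M_2)$ are \emph{bi-interpretable}; so it suffices to prove that bi-interpretability, restricted to the Borel class of structures of the form $\+ H(M)$, is essentially countable.

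For this I would use two facts. (i) Isomorphism of $\aleph_0$-categorical structures in a fixed countable language is smooth — such a structure is determined up to isomorphism by its complete theory, there being only finitely many types of each arity, all isolated — so the quotient $\mathbf Z$ of this class by isomorphism is standard Borel and bi-interpretability descends to an equivalence relation $\bar E$ on $\mathbf Z$; moreover $M\mapsto [\+ H(M)]_{\cong}\in\mathbf Z$ is Borel. (ii) Each $\+ H(M)$ is bi-interpretable with only countably many isomorphism types $\+ H(M')$, and a list of them can be produced uniformly and Borel-measurably from $M$; the point is that, by $\aleph_0$-categoricity, a bi-interpretation between two such structures is pinned down by a finite amount of combinatorial data — a choice of dimensions together with a single finite ``coordinate'' configuration — since homogeneity and the finiteness of the orbit count at each arity allow a coherent finite correspondence to be extended canonically. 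Granting (i) and (ii), $\bar E$ is a countable Borel equivalence relation on $\mathbf Z$ and $M\mapsto [\+ H(M)]_{\cong}$ is a Borel reduction of $\cong_{\+ C}$ to $\bar E$, which is what we want.

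The main obstacle is step (ii). For an arbitrary $\aleph_0$-categorical structure it can fail outright — there are such structures with $2^{\aleph_0}$ pairwise non-isomorphic reducts, hence potentially that many bi-interpretation companions — so the count genuinely uses the special form of $\+ H(M)$: that it is the canonical homogeneous structure attached to the distinguished idempotent $V$, so that every interpretation in play is coordinatised by the finite orbit data on $L(V)$. Turning this into a uniformly Borel countable list of companions, and attending to the descriptive bookkeeping (Borel selection of $V$; standard-Borelness of the isomorphism quotient; verifying that $\bar E$ is Borel rather than merely analytic), is the technical heart of the proof; it is carried out, in the equivalent language of coarse groups and suitable filters on them, in~\cite{Nies.Schlicht.etal:21}.
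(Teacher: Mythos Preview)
Your route diverges from the paper's at the crucial step. Both begin by passing to the Borel class $\+ D$ of meet groupoids via duality, but the paper then applies the Hjorth--Kechris criterion directly to $\+ D$: it fixes the countable fragment $F$ of $L_{\omega_1,\omega}$ generated by a Lopez--Escobar sentence for $\+ D$ together with the formula $\delta(x)$ saying ``$x$ is a faithful subgroup'', and shows that for each $M\in\+ D$ the theory $\mathrm{Th}_F(M,V)$ (with the distinguished $V$ named) is $\aleph_0$-categorical. Essential countability then follows from the Hjorth--Kechris black box, with no passage through Ahlbrandt--Ziegler, canonical structures $\+ H(M)$, or bi-interpretability.

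Your alternative via Ahlbrandt--Ziegler is not unreasonable, but your step~(ii) is not established. The sentence ``a bi-interpretation between two such structures is pinned down by a finite amount of combinatorial data'' counts \emph{bi-interpretations}, not isomorphism types in a bi-interpretability class; the latter is what you need. The honest argument for countable classes would be: each $\+ H(M)$ is the orbit structure of $G\curvearrowright L(V)$ for some open subgroup $V$ of $G\cong\wt{\+G}(M)$, and $G$ has only countably many open subgroups. You also need $\bar E$ to be \emph{Borel}, not merely analytic with countable classes, and you do not argue this. Finally, your closing line that ``it is carried out \ldots\ in~\cite{Nies.Schlicht.etal:21}'' is inaccurate: that paper (as summarised here) uses the Hjorth--Kechris criterion, not your counting-of-bi-interpretability-companions strategy. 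So what you have is a plausible outline of a different proof whose hardest step is left open, together with a mis-citation for where that step is done.
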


\begin{proof} 
 By the above $ \cong _\+ C$ is $\equiv_B$ the isom.\ relation on an invariant  Borel class $\+ D$ of countable structures for a finite signature.
 We use a result of Hjorth and Kechris \cite{Hjorth.Kechris:95}: let $\sss$ be a finite signature. 
 
 Let $F$ be a countable fragment of the infinitary logic $L_{\omega_1, \omega}(\sss)$; this means that  $F$ contains f.o.\ logic, and is closed under things like substitution and taking subformulas.  Let $\+ D$ be an $\cong$-invariant Borel class of $\sss$-structures. Suppose for each $M \in \+ D$ there is a tuple $\ol a $ in $M$ such that $\mathrm{Th}_F(M, \ol a)$ (the theory in this fragment)  is $\aleph_0$-categorical. 
 
 Then (conclusion of the result) $\cong_\+ D$ is essentially countable.
 
 Let now $\sss$ be the signature of meet groupoids. By the Lopez-Escobar theorem, there is a sentence $\phi$ in $L_{\omega_1, \omega}(\sss)$ describing  $\+ D$. There also is a formula $\delta(x)$ describing what a ``faithful subgroup" is, i.e., a $V \in M$ as above. Let $F$ be the fragment generated by $\phi, \delta$. 
 \begin{claim}[\cite{Nies.Schlicht.etal:21}, Claim 4.5]  $\mathrm{Th}_F(M, \ol a)$   is $\aleph_0$-categorical. \end{claim}
 Now we can apply the Hjorth-Kechris result  to get the upper bound on the complexity of $\cong_{\+ C}$. 
 \end{proof}

\section{Ferov: A paradigm for computation with symmetries  of combinatorial structures, and visualisations thereof}

\subsection{Candidate paradigm}

Visualisation of a combinatorial object and computation with its symmetries is a difficult task. The first problem is that the way in which an object is given can be straightforward to a mathematician with an experience in the field, but   does not have to translate in any obvious way to a computational or graphical software that is currently available.

We propose a modular approach, where  an abstract object, be that a combinatorial structure or an element of its automorphism group, is considered in three different ways and there is an algorithm (not necessarily uniformly given) that provides a transition from one way of representing the object to another. The paradigm can be summed up by the following diagram.

\begin{figure}[h]
    \begin{center}
    \begin{tikzpicture}[scale=0.15]
    \tikzstyle{every node}+=[inner sep=0pt]
    \draw [black] (38.9,-15.4) circle (4);
    \draw [black] (38.9,-15.4) circle (3.4);

    \draw (38.9,-15.4) node {User};
    \draw [black] (15.8,-32.4) circle (4);
    \draw (15.8,-32.4) node {Descr.};
    \draw [black] (38.9,-32.4) circle (4);
    \draw (38.9,-32.4) node {Data};
    \draw [black] (62.8,-32.4) circle (4);
    \draw (62.8,-32.4) node {Visual.};
    \draw [black] (36.7,-17.7) -- (18.25,-30.67);
    \fill [black] (18.25,-30.67) -- (19.2,-30.62) -- (18.62,-29.8);
    \draw [black] (18.8,-32.4) -- (35.9,-32.4);
    \fill [black] (35.9,-32.4) -- (35.1,-31.9) -- (35.1,-32.9);
    \draw [black] (41.9,-32.4) -- (59.8,-32.4);
    \fill [black] (59.8,-32.4) -- (59,-31.9) -- (59,-32.9);
    \draw [black] (60.36,-30.66) -- (41.34,-17.14);
    \fill [black] (41.34,-17.14) -- (41.71,-18.01) -- (42.29,-17.2);
    \end{tikzpicture}
    \end{center}
\end{figure}
The interpretation of the workflow within the diagram should be the following:
\begin{enumerate}
    \item the user provides a formal description of an object;
    \item based on the description given, an algorithm produces a data structure that captures the structure of the object, potentially with additional information;
    \item based on the given data, an algorithm produces a visualisation of the object.
\end{enumerate}

We explain how to interpret the terms used above. 
\begin{itemize}  \item \emph{Formal description}:  anything that can be passed to a computer. For example,  it could be a finite presentation of a group, a symmetric matrix describing a Coxeter system, a matrix describing a quasi-label regular tree, etc. This information should also always go together with a positive integer that specifies the size of the finite substructure of a possibly infinite object.

\item \emph{Data}:  content of  an appropriate data structure   representing the abstract object. In case of graphs,  the data would   capture the incidence structure of the graph.

\item \emph{Visualisation}:   any tool that, when provided with a an incidence structure of a graph, potentially with additional data such as labels, colouring, etc., will produce a graphical output that can be intuitively understood by the user. The user should have the option of tweaking the output based on their personal aesthetics and needs.
\end{itemize}
This modular approach has two  advantages. Firstly, the data can be stored and reused without having to recompute the incidence structure of an object every time the user wants to slightly modify the visual output, thus speeding up the process. Secondly,   this approach allows us to use tools that are readily available without having to reinvent the wheel for our own purposes.

\subsection{Graphs}
In the case of visualisation of graphs, the description will depend on the class of graphs we want to work with. For example, in the case of regular trees, the description might consist of a pair of integers $(d, R)$, where $d$ denotes the degree of the tree and $R$ denotes the radius, possibly with some  significant vertices.

The two most commonly used data structures to represent graphs  are   adjacency matrices and   neighbour (or adjacency)  lists. The meaning of an adjacency matrix is clear: is is a square matrix whose rows and columns are indexed by vertices of the graph. The entry on the position $(i,j)$ is an indicator whether or not the $i$-th and the $j$-th vertex of the matrix are connected by an edge. Adjacency matrix has many advantages, for example it allows direct use of methods from algebraic graph theory, but for large sparse graphs, i.e. graphs with low number of edges, it is very ineffective in terms of space. Indeed, the graph has $n$ vertices, then the matrix is of size $n^2$. In case of graphs that have uniform upper bound on the degree of a vertex, say $d$, we know that  the graph will have at most $\frac{d}{2}n$ edges. For large $n$ this could be extremely ineffective. For this reasons, we propose to use neighbourhood list, i.e. for each vertex we have a list of its neighbours. Since we are primarily interested in the study of graphs with uniformly bounded degree, we believe that list of neighbours is the appropriate data representation of the graph. For the sake of computability this does not really make difference, as both of these approaches are equivalent, but if effectiveness/complexity of the computation is to be considered, the difference will become significant.

A fairly standard format for representing the incidence structure of a graph, including labels and colours, is \href{https://en.wikipedia.org/wiki/DOT_(graph_description_language)}{\color{blue}DOT file}. The DOT file format is supported by specialised graph theoretic software such as
\begin{itemize}
    \item \href{https://www.gap-system.org/}{\color{blue}GAP} - a system for computational discrete algebra;
    \item \href{https://pallini.di.uniroma1.it/}{\color{blue}nauty and Traces} - programs for computing automorphism groups of graphs and digraphs;
    \item \href{https://networkx.github.io/}{\color{blue}NetworkX} - a Python package for the creation, manipulation, and study of the structure, dynamics, and functions of complex networks.
\end{itemize}
Clearly, this is an advantage as an output of any of the above tools can be used as an input for any of the above, thus potentially compensating for a lack of specific functionality within some package. For example, a Python script could be used build the  incidence structure of two graphs by using NetworkX. The  output can  then be  forwarded to nauty in order  to determine whether the  two graphs are isomorphic.

The DOT file format is also supported by most visualisation tools such as
\begin{itemize}
    \item \href{https://graphviz.org/}{\color{blue}GraphViz} - an open source graph visualisation software;
    \item \href{https://dot2tex.readthedocs.io/en/latest/}{\color{blue}dot2tex} - a tool designed to give graphs generated by Graphviz a more LaTeX friendly look and feel;
    \item \href{https://github.com/magjac/d3-graphviz}{\color{blue}d3-graphviz} - a JavaScript library   that renders DOT graphs and supports animated transitions between graphs and interactive graph manipulation in a web browser.
\end{itemize}

\subsection{Automorphisms of a tree}
In case of computing with automorphism of a regular tree, the user might present the automorphism in term of local actions. A $d$-regular tree $\mathcal{T}_d$ has a legal edge colouring using $d$-colours, i.e. a function $c \colon E\mathcal{T}_d \to \{1, \dots, d\}$ such that $c(e_1) \neq c(e_2)$ whenever the edges $e_1, e_2 \in E \mathcal{T}_d$ are adjacent. Clearly, each vertex $v \in V\mathcal{T}_d$ then ``sees'' each colour exactly once and an automorphism $\alpha \in \mathop{Aut}(\mathcal{T}_d)$ induces a permutation $\sigma(\alpha, v) \in \mathop{Sym}(d)$ given by
\begin{displaymath}
    \sigma(\alpha,v)(c(e)) = c(\alpha(e)),
\end{displaymath}
where $e \in E\mathcal{T}_d$ ranges over edges incident to $v$. If we pick a vertex $v_0 \in V\mathcal{T}_d$, then every vertex $w$ can be uniquely identified with a finite sequence $W \in \{1, \dots, d\}$. One can easily check that an each automorphism $\alpha$ is uniquely determined by a pair
\begin{displaymath}
    \left(w, (\sigma_W)_{W \in \{1, \dots, d\}^*}\right),
\end{displaymath}
where $w = \alpha(v_0)$ and $\sigma_W = \sigma(\alpha, W)$.

The description of an automorphism $\alpha \in \mathop{Aut}(\mathcal{T}_d)$ provided by the user can then be the pair $(w, \overline{\sigma})$, where $w = \alpha(v_0)$ and $\overline{\sigma}$ is some encoding of a function from $V\mathcal{T}_d$ to $\mathop{Sym}(d)$, possibly together with additional information, such as a positive integer $R$ denoting the radius around the vertex $v_0$.

The data would then consist of an appropriate data structure for storing the the local permutations for all the vertices within the distance $R$ from the vertex $v_0$. For example, in our \texttt{python} implementation we used the \texttt{dict} structure.

In this particular case, the visualisation might not necessarily be a graphical output, but an answer to a problem, such as the following.
\begin{enumerate}
    \item Does the automorphism fix a vertex? If yes, list all the fixpoints.
    \item Does the automorphism translate an edge? If yes, list its endpoints.
    \item Does the automorphism translate along an axis? If yes, compute the translation length and describe  all the vertices on the axis.
    \item Do two given automorphism ``agree'' on some vertex? If yes, list all such vertices.
    \item Are two given automorphisms conjugate? If yes, specify a conjugating element.   
\end{enumerate}

\part{Nies: Two talks on computably  t.d.l.c. groups}
\label{P2}

\section{Introduction} 

   These notes were used by Andre for two talks at the Newcastle conference ``Computable aspects of t.d.l.c. groups" which took place in October 2022. They   form a short   version of~\cite{Melnikov.Nies:22}.

The  first talk    introduced  two  notions of computable presentation of  a  t.d.l.c. group,  and show their equivalence.  The first notion relies on standard notions of computability in the uncountable setting. The second  notion restricts computation to a countable structure of approximations of the elements, the ``meet groupoid" of compact open cosets.  Based on this, I obtain various examples of computably t.d.l.c. groups, such as $\Aut(T_d)$ and some algebraic groups over the field of p-adic numbers.   The first talk also outlines the computability theoretic notions  that are needed.

The second talk      showed that   given a computable presentation of a  t.d.l.c.\ group,  the  modular function and the Cayley-Abels graphs (in the compactly generated case) are computable. We  discuss the open question whether     the scale function can be  non-computable. 
We  will give a criterion based on meet groupoids when    the computable presentation is unique up to computable isomorphism. 
We  explain why  the   class of computably t.d.l.c. groups  is closed under most of the constructions studied by Wesolek~\cite[Thm.\ 1.3]{Wesolek:15}.

We thank Stephan Tornier and George Willis for helpful conversations  on t.d.l.c.\ groups, and for providing references.

The talks  are centred on the following questions.
  
 \begin{enumerate}[label=(\alph*)]   \item How can one define a computable presentation of  a t.d.l.c.\ group?     
 
\n  Which  t.d.l.c.\ groups have such a presentation?

   \item  Given a computable  presentation of a t.d.l.c.\ group, are objects such as the rational valued Haar measures, the modular function, or the  scale function  computable?

%
\item Do    constructions that lead from t.d.l.c.\ groups to new t.d.l.c.\ groups have algorithmic versions?
    \item When is      a computable presentation of a t.d.l.c.\ group unique up to computable isomorphism?

 \end{enumerate}

%

  
\subsection{Background on t.d.l.c.\ groups}  \label{s:background}
  Van Dantzig~\cite{Dantzig:36}   showed that each t.d.l.c.\ group has a neighbourhood basis of the identity consisting of compact open subgroups.  
With Question~(a) in mind,   we discuss  six  well-known  examples of t.d.l.c.\ groups,  and indicate  a compact open subgroup when it is not obvious.  We will return to them  repeatedly during the course of   the   paper.
 
\begin{enumerate}[label=(\roman*)]   \item All countable discrete groups are t.d.l.c. 

\item All profinite   groups are t.d.l.c.
 
 \item $  ( \mathbb Q_p,+)$,  the additive group of   $p$-adic numbers   for a prime~$p$ is an example of a t.d.l.c.\  group that is  in neither of the two classes above.
   The additive group $\ZZ_p$ of $p$-adic integers forms a compact open subgroup. 

\item  The    semidirect product $\ZZ \ltimes  \QQ_p$  corresponding to the  automorphism  $x \mapsto px$ on $\QQ_p$, and   $\ZZ_p$  is a compact open subgroup.
\item Algebraic groups over local fields, such as $\SL_n(\QQ_p)$ for $n \ge 2$, are t.d.l.c. Here $\SL_n(\ZZ_p)$  is a compact open subgroup.
  
\item Given a connected countable undirected graph such that each vertex has finite degree, its   automorphism group is t.d.l.c. The stabiliser of any   vertex forms a compact open subgroup.  \end{enumerate} 

By convention,  all t.d.l.c.\ groups will be  infinite.


  \subsection{Computable structures: the countable case}   \label{s:compMath}
  Towards defining computable presentations, we first recall the    definition of a computable function on~$\NN$,  slightly adapted to our purposes in that we allow the domain to be any computable set. 
  \begin{definition} \label{def:computable} Given a   set $S \sub \NN^k$, where $k\ge 1$,  a  function $f \colon S \to \NN$ is called computable if there is a Turing machine that on inputs $n_1, \ldots, n_k $  decides whether the tuple of  inputs   $(n_1, \ldots, n_k) $ is in $S$, and if so outputs $f(n_1, \ldots, n_k)$.  \end{definition} 
One version of the  Church-Turing thesis    states that  computability in this sense   is the same as being computable by some algorithm.

A structure in the  model theoretic sense consists of  a nonempty      set $D$, called the domain,  with relations and functions defined on it.  The following definition was first formulated in the 1960s by Mal'cev  and Rabin  independently.
\begin{definition} \label{compStr}  A \emph{computable   structure}  is a structure such that the domain is a 
 computable  set   $D\sub  \NN$,  and the    functions and relations of the structure are computable.
A  countable structure $S$ is called \emph{computably presentable} if some  computable structure $W$ is  isomorphic to  it. In this context we call $W$ a \emph{computable copy} of $S$.  \end{definition} 

%
\subsection{Computable structures:  the uncountable case}
 In the field of computable analysis (for   detail  see e.g.\ Pauly~\cite{Pauly:16} or Schr\"oder~\cite{Schroeder:21}),    to define computability for an  uncountable structure, one  begins by   representing all the elements  by ``names",  which are infinite objects simple enough to be    accessible to computation of oracle Turing machines.  Names usually are elements of  the set $[T]$ of  paths on some computable subtree $T$ of $\NN^*$ (the tree of strings with natural number entries).  For instance, a  standard name of a real number  $r$ is  a path coding a      sequence of rationals $\seq{q_n} \sN n$ such that $|q_n - q_{n+1}| \le \tp{-n}$ and $\lim_n q_n = r$.

 Via   Turing machines with  tapes that hold the input,  one can define computability of functions and relations on $[T]$. One    requires that the functions and relations of the uncountable structure are computable on the names.     This   defines computability on spaces relevant to computable analysis; for instance,  one can define that   a function on $\RR$ is computable.
  Since  each  totally disconnected Polish space is homeomorphic to $[T]$ for some subtree $T$ of $\NN^*$,   there is no need to distinguish between names and objects  in our setting.  
 An   \emph{ad hoc}  way to define computability  often  works  for  particular classes  of uncountable structures:     impose   algorithmic constraints on   the definition of the class. 
 
An example is  the definition of when 
  a profinite group $G$ is computable  due to  Smith~\cite{Smith:81} and la Roche~\cite{LaRoche:81}: $G= \varprojlim_i (A_i, \psi_i)$ for a computable diagram  $ (A_i, \psi_i)\sN i$ of  finite groups and epimorphisms~$\psi_i \colon A_i \to A_{i-1}$ ($i>0$).  
 

We now    discuss the questions posed at   the beginning   in more detail.

\subsection{Computable presentations of t.d.l.c.\ groups}   \label{s: types of presentations}
We aim at a   robust definition of the class of   t.d.l.c.\ groups with  a computable presentation. We want this class to have  good algorithmic closure properties, and also ask that our definition extend the existing definitions for  discrete, and for profinite groups.  We  provide two  types of computable presentations, which  will  turn out to be equivalent: a t.d.l.c.\ group has a computable presentation of one type iff it has one of the other type.

 \emph{Computable Baire presentations.} One asks that the domain of $G$ is what we call an computably   locally compact subtree of $\NN^*$ (the tree of strings with natural number entries), and the operations are computable in the sense of oracle Turing machines.     
Baire presentations  appear to be the simplest and most elegant notion of computable presentation for general totally disconnected Polish groups.  However,     computable Baire presentations are hard to study because the domain is usually uncountable.

\emph{Computable presentations via a meet groupoid.} We introduce  an  algebraic structure  $\+ W(G)$ on the countable set of compact open cosets in $G$, together with $\ES$. This  structure is   a  partially ordered groupoid, with the usual set inclusion,  and multiplication of a left coset of a subgroup $U$ with a right coset of    $U$ (which is a coset).  The intersection of two compact open cosets is such a coset itself,  unless it is empty, so  we have  a meet semilattice.   A   computable presentation of    $G$ via meet groupoids is a computable copy of the meet groupoid of $G$ such that the index function on compact open subgroups, namely $U,V \mapsto | U \colon U \cap V|$,  is also computable. 

 \subsection{Which t.d.l.c.\ groups $G$ have computable presentations?} Dis\-crete groups,  as well as   profinite groups, have  a computable presentation as   t.d.l.c.\ groups if and only if they have one in the previously established sense from the 1960s and 1980s,   reviewed in Section~\ref{s:compMath} above.
We provide  numerous examples of computable presentations for     t.d.l.c.\ groups  outside these two classes.   For $(\QQ_p, + )$  we    use  meet groupoid presentations. For   $\Aut(T_d)$  and   $\SL_n(\QQ_p)$   we use   Baire presentations.

It can be difficult to determine whether a particular t.d.l.c.\ group has a computable presentation.   Nonetheless, our  thesis  is that \emph{all} ``natural" groups that are considered in the field of t.d.l.c.\ groups have computable  presentations. An interesting testing ground for this thesis  is        given by  Neretin's groups~$\+ N_d$ of almost automorphisms of $T_d$, for $d \ge 3$; see for instance~\cite{Kapoudjian:99}.


\subsection{Associated  computable objects}  \label{Assoc.Comp} Recall that    to a t.d.l.c.\ group $G$ we associate  its meet groupoid $\+ W(G)$, an algebraic structure on its compact open cosets. If $G$ is given by a computable Baire presentation, then we   construct a copy $\+ W  $ of the meet groupoid $\+ W(G)$ that is computable in a strong sense, essentially including the condition  that some  (and hence any) rational valued  Haar measure on $G$ is computable when restricted to  a function   $\+ W \to \RR$.
We will   show in \cref{cor:action} that   the left,  and hence also the right,  action of $G$ on $\+ W$ is  computable. 
We conclude  that the  modular function on $G$  is computable. If $G$ is compactly generated,  for each  Cayley-Abels graph  one can determine a  computable copy,  and any two     copies of this type  are computably quasi-isometric (\cref{prop:CA graph}).  Intuitively, this means that the large-scale structure of $G$ is a computable invariant.

Assertions that the scale function  is computable have been made   for particular t.d.l.c.\ groups in works such as Gl\"ockner~\cite{Glockner:98} and Willis~\cite[Section~6]{Willis:01}; see the   survey Willis~\cite{Willis:17}.
 In these particular cases, it was generally clear what it  means that one can compute the scale $s(g)$:  provide an algorithm that  shows it.  One has to declare what  kind of   input    the algorithm takes; necessarily it has to  be some  approximation to $g$, as $g$   ranges over a potentially  uncountable domain. Our new framework allows us to give a precise meaning to the question whether  the scale function is computable for a particular computable presentation of  a     t.d.l.c.\ group, thus also allowing  for a precise negative answer.   This appears reminiscent of the answer to   Hilbert's  10th problem,  which asked for    an algorithm that decides whether a multivariate polynomial over $\ZZ$ has a zero.  Only after a precise notion of computable function was  introduced in the 1930s, it became possible to   assert   rigorously    that no such algorithm exists; the final negative answer was given in 1970 by Y.\ Matyasevich \cite{Matijasevic:70} (also see \cite{Matijasevic:93}). In joint work with Willis~\cite[Appendix 1]{Melnikov.Nies:22} we have shown that there is   a computable presentation of    a t.d.l.c.\ group $G$ such that  the scale   function  noncomputable for this presentation.  
One  can further ask whether  for some computably presented $G$,  the scale is non-computable for \emph{each} of its computable presentations. 
An even stronger negative result  would be that such a  $G$ can  be chosen to have a unique computable presentation  (see the discussion in \cref{ss:auto} below).


\subsection{Algorithmic versions of constructions that lead from t.d.l.c.\ groups to new t.d.l.c.\ groups}
Section~\ref{s:closure} 
  shows that the class of computably t.d.l.c.\ groups   is closed under suitable algorithmic versions of many constructions that have been studied in the theory of t.d.l.c.\ groups. In particular,    the constructions (1), (2), (3) and (6) described  in  Wesolek~\cite[Thm.\ 1.3]{Wesolek:15}    can be phrased algorithmically  in such a way  that they stay within the class of  computably t.d.l.c.\ groups; this provides   further evidence that our class is robust. These  constructions  are suitable versions, in our algorithmic topological setting, of 
  \bi \item passing to closed subgroups, \item taking group extensions via continuous actions,  \item forming ``local" direct products, and \item taking quotients by closed normal subgroups  \ei (see~\cite[Section~2]{Wesolek:15}  for   detail on these constructions). 
  The algorithmic  version of taking quotients (\cref{thm:closure normal}) is the most demanding; it uses  extra insights  from   the proofs that the various forms of computable presentation are equivalent. 
  

  \subsection{When is a computable presentation  unique?} \label{ss:auto}
  Viewing a   computable Baire presentation as a description, we are interested in the question whether such a description is unique, in the sense that between any two of them there is a computable isomorphism. 
Adapting terminology for countable structures going back to  Mal'cev,
   we will call such  a  group  \emph{autostable}.     If a t.d.l.c.\ group is  autostable,  then computation in the group can be seen as independent of its particular description.    
  \cref{thm:compCrit} reduces the problem of whether a t.d.l.c.\ group is autostable to the   countable   setting of meet groupoids.   

 \section{Computability on paths of rooted trees}
  \subsection{Computably locally compact subtrees of $\NN^*$}

\begin{notation} {\rm Let $\NN^*$ denote the   set of strings with natural numbers as entries. We use letters $\sss, \tau, \rho$ etc.\ for elements of $\NN^*$.  The  set  $\NN^*$ can be seen as a directed tree: the empty string is the root, and the successor relation is given by appending a number at the end of a string.    We write $\sss \preceq \tau$ to denote that $\sss $ is an initial segment of $\tau$, and $\sss \prec \tau$ to denote that $\sss$ is a proper initial segment. We can also identify finite strings of length $n$+$1$ with partial functions $\NN \rightarrow \NN$ having finite support $\{0, \ldots, n\}$.
We then write $\tau_i$ instead of $\tau(i)$. By $\max (\tau)$ we denote  $\max \{\tau_i \colon \, i \le n\}$. 
Let  $h \colon \NN^* \to \NN$ be the canonical encoding given by $h(w)= \prod_{i< |w|} p_i^{w_i+1}$, where $p_i$ is the $i$-th prime number.}  \end{notation}

\begin{definition}[Strong indices for finite sets of strings]  For   a finite set $u \sub \NN^*$ let   $n_u=\sum_{\eta \in u} \tp {h(\eta) }$;  one says that $n_u$ is the  \emph{strong index} for~$u$.     \end{definition}
  We will usually  identify a finite subset of $  \NN^* $ with its strong index.  
 Unless otherwise mentioned, by a (directed) tree  we mean a nonempty subset~$T$ of $\NN^*$ such that $\sss \in T$ and $\rho \prec \sss$ implies $\rho \in T$.  
By  $[T]  $ one denotes  the set of  paths of a tree $T$.  Our trees usually have no leaves, so $[T]$ is a closed set in Baire space $\NN^\NN$ equipped  with the usual product topology. Note that    $[T]$ is compact if and only if  each level of $T$ is finite, in other words $T$ is finitely branching. 
For $\sss \in T$ let    \bc $[\sss]_T = \{X \in [T] \colon \sss \prec X \}$.  \ec That is,  $ [\sss]_T $ is the cone  of  paths on $T$ that extend  $\sss$.

%
\begin{definition}[computably  locally compact  trees] \label{def:comploccompact} Let $T$  be a computable subtree of $ \NN^*$ without leaves such that only the root can have infinitely many successors. We say that  $T$ is \emph{computably  locally compact}, or c.l.c.\ for short, if  
    there is a computable     function $H \colon \NN \times \NN \to \NN $ such that,  if   $\rho \in T $ is a nonempty string, then    $\rho(i) \le H(\rho(0), i)$ for each $i < |\rho|$.      \end{definition}
    
    We note that in the paper version, the notion of c.l.c.\ trees  is somewhat broader: for some parameter $k \in \NN^+$, only strings on $T$ of length $< k$ can have infinitely many successors. This helps but is only of  technical importance. 
    

%

Given  such a  tree $T$, the compact open subsets of $[T]$ can be algorithmically encoded by natural numbers. The notation below     will be used throughout.
\begin{definition}[Code numbers for compact open sets] \label{defn: str index compact}  Let $T$  be a c.l.c.\ tree. For  a finite set $u\sub T - \{\ES\}$,  let \bc $\+ K_u= \bigcup_{\eta\in u} [\eta]_T$,   \ec (note that this set is compact). By a  \emph{code number} for a compact open set $\+ K\sub [T]$ we mean the strong index for a finite set $u$ of strings such that  $\+ K = \+ K_u$.  \end{definition} 

Such a code number  is not unique (unless $\+ K$ is empty).  So we will need to  distinguish between the actual compact open set, and any of its code numbers.  
 So  one  can   decide, given    $u\in \NN $ as an input,  whether $u$ is a  code number. Clearly, each  
    compact open subset of $[T]$ is of the   form $\+ K_u$ for some~$u$. 
    
     The following lemma shows that the basic set-theoretic relations and operations are decidable for sets of the form $\+ K_u$, similar to the case of finite subsets of $\NN$.
    \begin{lemma} \label{lem: comp index tree}  Let $T$ be  a c.l.c.\  tree.  Given  code numbers  $u, w$,
    
 \bi \item[(i)] one can compute  code numbers for $\+ K_u \cup \+ K_w$ and  $\+ K_u \cap \+ K_w$;
    
   \item[(ii)] one can decide whether $\+ K_u \sub \+ K_w$.  In particular, one can,  given  a  code number  $u\in \NN$,     compute the minimal  code number $u^*\in \NN$ such that $\+ K_{u^*}=\+ K_u$. 
   \ei \end{lemma}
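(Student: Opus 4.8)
The plan is to reduce everything to elementary facts about strings and the canonical encoding $h$, exploiting that $T$ is computable and that the bound function $H$ lets us bound the lengths of strings we must consider.

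\medskip

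First I would record the basic subroutine: given a code number $u$ (a strong index for a finite set of strings $u\sub T-\{\ES\}$), one can compute from $u$ the finite set $u$ itself, since the strong index $n_u=\sum_{\eta\in u}\tp{h(\eta)}$ is just the binary expansion of $n_u$ read through the bijection $h$, and $h$ is computably invertible on its range. Next, a normalisation step: given $\eta\in T$ and a string $\rho$ with $\eta\preceq\rho$, we have $[\rho]_T\sub[\eta]_T$, and more usefully, for $\eta\in T$ the cone $[\eta]_T$ equals the (finite, computable) union $\bigcup_{j}[\eta\ape j]_T$ over those $j$ with $\eta\ape j\in T$; here the crucial point is that, since $\eta\neq\ES$ (so $\eta(0)$ is defined) and $T$ is c.l.c., the set of such $j$ is contained in $\{0,\ldots,H(\eta(0),|\eta|)\}$ and hence is finite and computable. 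Iterating, for any $\eta\in T-\{\ES\}$ and any level $\ell\ge|\eta|$ we can compute the finite set $u_{\eta,\ell}$ of all strings $\rho\in T$ of length $\ell$ with $\eta\preceq\rho$, and $\+K_{\eta}=\+K_{u_{\eta,\ell}}$. For a root string we never need this since code numbers only involve strings in $T-\{\ES\}$.

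\medskip

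For part (i): $\+K_u\cup\+K_w=\+K_{u\cup w}$, and $u\cup w$ is computable from $u,w$; this gives a code number for the union immediately. For the intersection, compute $u,w$, let $\ell$ be the maximum length of a string occurring in $u$ or in $w$, and for each $\eta\in u$ replace it by $u_{\eta,\ell}$ and likewise for each $\eta\in w$, obtaining $u',w'$ with $\+K_{u'}=\+K_u$, $\+K_{w'}=\+K_w$, and every string in $u'\cup w'$ of length exactly $\ell$. Since distinct strings of the same length give disjoint cones, $[\s]_T\cap[\tau]_T$ for $\s,\tau$ of length $\ell$ is $[\s]_T$ if $\s=\tau$ and $\ES$ otherwise; hence $\+K_{u'}\cap\+K_{w'}=\+K_{u'\cap w'}$, and $u'\cap w'$ is a computable code number for $\+K_u\cap\+K_w$. (If $u'\cap w'=\ES$ the intersection is the empty compact open set, which is fine.)

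\medskip

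For part (ii): to decide $\+K_u\sub\+K_w$, normalise as above to a common level $\ell$ (the max length appearing in $u$ and $w$), getting $u',w'$ consisting of length-$\ell$ strings; then $\+K_u\sub\+K_w$ iff $u'\sub w'$, which is decidable. This works because at a fixed level the cones are pairwise disjoint and their union is a disjoint one, so containment of unions is containment of index sets. Finally, given $u$, to compute the minimal code number $u^*$ with $\+K_{u^*}=\+K_u$: using the decision procedure just obtained we can, for each $v<n_u$ in turn, check whether $v$ is a valid code number (i.e. codes via $h^{-1}$ a finite subset of $T-\{\ES\}$ — decidable since $T$ is computable) and whether $\+K_v=\+K_u$ (two containments), and output the first such $v$, or $n_u$ itself if none is smaller. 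This halts since there are only finitely many candidates below $n_u$.

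\medskip

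The main obstacle is purely bookkeeping: making sure the ``push down to a common level'' operation is genuinely computable, which is exactly where c.l.c.-ness is used — without the bound $H(\rho(0),i)$ on the branching we could not enumerate the successors of a node, so we could not compute $u_{\eta,\ell}$. Once that subroutine is in hand, (i) and (ii) are immediate, and the minimal-code-number claim is a finite search. One should also remember the degenerate cases (empty compact open sets, and the fact that code numbers by definition never list the root), but these cause no real trouble.
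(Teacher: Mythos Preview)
Your proof is correct. The argument differs from the paper's mainly in the treatment of intersections in (i): the paper handles $[\alpha]_T\cap[\beta]_T$ directly by a case split on whether $\alpha,\beta$ are comparable (in which case the intersection is the cone of the longer string) or incomparable (empty), which does not need the c.l.c.\ bound at all; you instead use a uniform ``push down to a common level'' normalisation, which does invoke $H$. Your approach has the virtue of uniformity --- the same subroutine dispatches both the intersection in (i) and the containment test in (ii) --- whereas the paper's intersection argument is shorter and shows that c.l.c.\ is only genuinely needed for~(ii). For (ii) the two proofs are essentially the same idea phrased slightly differently.
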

  \begin{proof}  
  \n (i) The case of  union   is trivial. For the   intersection operation,  it suffices to consider the case that $u$ and $w$ are singletons. For strings $\aaa, \beta \in T$, one has  $[\aaa]_T \cap  [\beta]_T = \ES$ if $\aaa ,  \beta$ are incompatbile, and otherwise $[\aaa]_T \cap  [\beta]_T = [\gamma]_T$ where $\gamma $ is the longest common initial segment of $\aaa, \beta$. 
  
\n  (ii)   Let~$H$ be a computable binary function as in   \cref{def:comploccompact}.  It suffices to consider the case that  $u$ is a singleton. Suppose that  $\aaa \in T - \{\ES\}$.
 The algorithm to  decide whether  $[\aaa]_T  \sub \+ K_w$ is as follows. Let $N$ be the maximum length of a string in $w$. Answer ``yes" if       for each $\beta \succeq \aaa$ of length $N$ such that $\beta(k) \le H(\aaa, k)$  for each $k < N$, there is $\gamma \in w$ such that $\gamma \preceq \beta$. Otherwise, answer~``no".
\end{proof}

\begin{definition} \label{def:E} Given a c.l.c.\  tree $T$, let $E_T$ denote the set of \emph{minimal}  code numbers for compact open subsets of $[T]$.    By the foregoing lemma, $E_T$ is decidable. \end{definition}

\subsection{Computable functions on the set of paths of computable trees} \label{s:comp notions}

Most of the  content of  this subsection can either be seen as a  special   case of known results  in abstract computable topology, or can be   derived from such results.

Let  $T$ be a computable subtree of $\NN^*$ without leaves.    To define that   a function which takes arguments from   the potentially  uncountable domain   $[T]$  is computable,   one descends to the countable domain of strings on $T$, where   the usual computability notions   work.  The first definition, Def.\ \ref{def:computable 1} below,   will apply   when we show  in \cref{cor:delta} that  the modular function on a computable presentation of a  t.d.l.c.\ group  is computable.  %
\begin{definition}  \label{def:computable 1}  \begin{enumerate} \item A function $\Phi: [T] \times \NN \to \NN$ is computable if there is an   {oracle  Turing machine} as follows. 
Given $f \in [T]$ and $w \in \NN$, 
when it has the   list of the values $f(0), f(1), f(2), \ldots $ written on the oracle tape, with sufficiently many queries of the type ``what is $f(q)$?" it can determine the value  $\Phi(f, w)$.   

A function $\Psi: [T]   \to \NN$ is computable if the function $\Phi(f, n)  = \Psi(f)$ (which ignores the number input)  is computable in the sense above.

\item A function $\Phi\colon [T] \to [\NN^*] $ is computable if and only if the function  $\wt \Phi: [T] \times \NN \to \NN$ given by $\wt \Phi(g, n)= \Phi(g)(n)$ is computable. 

\item Similarly, one defines that  $\Phi \colon [T] \times [S] \to [\NN^*] $ is computable, using a TM with  two oracle tapes.
\end{enumerate}

 \end{definition}
   
  \begin{example} Let $T= \NN^*$. The function  $\Phi(f,n)= \sum_{i=0}^n f(i)$ is computable. The oracle TM with $f$ written on the oracle tape  queries the values of  $f(i)$ for $i=0, \ldots,n$ one by one and adds them.  \end{example}
  
  Note that in general our functions will only be defined on $[T]$, not  on all of $[\NN^*]$. Thus, the oracle TM   only needs to  return an answer if the oracle $f$ is in~$[T]$.

\begin{remark} If $T$ is c.l.c., then  to say that $\Psi: [T]   \to \NN$ is computable means that there is a computable prefix free set $S$ of nonempty strings on $T$ and a computable function $\psi\colon S \to \NN$  such that $[T]= \bigcup_{\sss \in S} [\sss]_T$ and  $\Psi(f) = \psi(\sss)$ where $\sss $ is the unique string in $S$  such that $\sss \prec f$. As discussed at the meeting, this definition has     time-bounded  versions (assuming that the string entries are written in their binary expansion). One requires e.g.\ that $S$ and $\psi$ are  in $P$. Given this one can study whether the scale is polytime computable for particular presentations of t.d.l.c.\ groups. \end{remark}
 
For proofs of the following see \cite[Section 6]{Melnikov.Nies:22}.
 \begin{lemma} \label{lem:lahm} Suppose that $K$ and $ S$ are computable trees without leaves. Suppose further  that there is a computable function $H$ such that $\sss(i) < H(i)$  for each $\sss \in K$ and $i < \sssl$.
 Let   $\Phi\colon [K]\to [S] $ be computable via an oracle TM $M$. 
\bi \item[(i)] There is a computable function $\gamma$ as follows: in a computation of $\Phi(f,n)$, $M$ only needs queries up to $\gamma(n)$.  
%
%
A function $\Psi: [T]   \to \NN$ is computable if the function $\Phi(f, n)  = \Psi(f)$ (which ignores the number input)  is computable in the sense above.

 \item[(ii)] If $\Phi$ is a bijection then $\Phi^{-1}$ is computable, via a partial computable function that is obtained uniformly in $K,H,S$ and $P_\Phi$. \ei \end{lemma}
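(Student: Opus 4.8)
The plan is to treat both parts as instances of a ``use principle'' on the compact space $[K]$. Since by hypothesis every string $\sigma\in K$ satisfies $\sigma(i)<H(i)$ for $i<|\sigma|$, the tree $K$ is finitely branching at every level, so $[K]$ is compact and for each $\ell$ the set of strings of length $\ell$ in $K$ is finite and can be listed effectively from $K$ and $H$; this is what makes König's lemma available in effective form. For (i), fix $n$ and call a string $\sigma\in K$ \emph{settled} if, running $M$ on input $n$ and answering each query ``$f(q)$?'' by $\sigma(q)$ whenever $q<|\sigma|$, the computation halts having issued no query $q\ge|\sigma|$. Being settled is preserved under end-extension of $\sigma$ (a halted computation is unaffected by oracle values it never queried), so the non-settled strings form a subtree $B$ of $K$; and $B$ has no infinite branch, since for $f\in[K]$ the terminating computation $M^f(n)$ queries only finitely many positions, so a sufficiently long initial segment of $f$ is settled. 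As $B$ is finitely branching, König's lemma forces $B$ finite, so at some level $s_1$ it has no strings: every $\sigma\in K$ of length $\ge s_1$ is settled, and the number of steps its computation takes depends only on $\sigma\upharpoonright s_1$, hence is bounded uniformly over such $\sigma$.

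This makes the obvious search effective: for $s=0,1,2,\dots$ list all length-$s$ strings of $K$, simulate $M$ on input $n$ for $s$ steps against each, and accept $s$ if every such simulation halts within $s$ steps making no query $\ge s$; then output $\gamma(n):=s$, a safe bound on the queried positions. The search accepts only correct bounds, and it terminates by the previous paragraph (take $s$ past $s_1$ and past the uniform step bound there). All of this is uniform in $n$, $K$, $H$ and the index of $M$. I expect the main, though modest, obstacle to be exactly this termination claim: one must invoke compactness of $[K]$, not merely that individual computations halt.

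For (ii), the structural point is that $\Phi$ is a continuous bijection of the compact space $[K]$ onto $[S]$; hence for each fixed length $m$ the sets $\Phi([\sigma]_K)$, with $\sigma\in K$ and $|\sigma|=m$, are compact (so closed) and form a finite partition of $[S]$, so each is also open, being the complement of the union of the finitely many others. Thus for $g\in[S]$ there is a unique such $\sigma_0$ with $g\in\Phi([\sigma_0]_K)$, and $\sigma_0=\Phi^{-1}(g)\upharpoonright m$. To compute $\sigma_0$ from the oracle $g$, I would use $\gamma$ from part (i): set $\gamma^{*}(\ell)=1+\max_{j<\ell}\gamma(j)$, so that the first $\ell$ coordinates of $\Phi(f')$ depend only on $f'\upharpoonright\gamma^{*}(\ell)$. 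Then for $\sigma\in K$ with $|\sigma|=m$ and $\tau\in S$ the condition $\Phi([\sigma]_K)\cap[\tau]_S=\emptyset$ is decidable: check that for every $\rho\succeq\sigma$ in $K$ of length $\max(m,\gamma^{*}(|\tau|))$, running $M$ with oracle $\rho$ yields first $|\tau|$ coordinates different from $\tau$.

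Now the algorithm for $\Phi^{-1}(g)\upharpoonright m$ is: for $\ell=1,2,\dots$, read $\tau:=g\upharpoonright\ell$ off the oracle and, for each of the finitely many $\sigma\in K$ of length $m$, test whether $\Phi([\sigma]_K)\cap[\tau]_S=\emptyset$; as soon as all but one candidate $\sigma$ have been excluded, output the survivor. This halts: each wrong $\sigma$ has $g\notin\Phi([\sigma]_K)$, and since $\Phi([\sigma]_K)$ is closed some $[g\upharpoonright\ell]_S$ misses it, so taking $\ell$ large enough (the maximum over the finitely many wrong $\sigma$) excludes them all; the true $\sigma_0$ is never excluded because $g$ itself witnesses $\Phi([\sigma_0]_K)\cap[g\upharpoonright\ell]_S\neq\emptyset$. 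Everything is uniform in $K$, $H$, $S$ and $P_\Phi$ (using the uniformity of (i)), which is exactly the partial computable inverse asserted.
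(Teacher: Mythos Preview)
Your proof is correct. Part~(i) matches the paper's argument essentially verbatim: both form the subtree of ``unsettled'' strings, observe it has no infinite branch (else $\Phi(f)(n)$ would diverge), invoke K\"onig's lemma via the bound $H$ to conclude it is finite, and then effectively search for the level at which it dies out.

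For part~(ii) your route differs from the paper's. The paper first argues that $[S]$ is compact (as the continuous bijective image of $[K]$) and that $\Phi^{-1}$ is therefore uniformly continuous; it then computes a modulus $h$ for $\Phi^{-1}$ directly---searching for the least $t$ such that $P_\Phi(\eta)\uhr t$ determines $\eta\uhr s$ for all $\eta\in K$ of length $g(t)$---and defines $P_{\Phi^{-1}}(\beta)$ by a single lookup over the finitely many $\eta\in K$ of length $g(h(s))$. Your approach instead runs a candidate-elimination loop: list the finitely many $\sigma\in K$ of length $m$, and for growing $\ell$ decide $\Phi([\sigma]_K)\cap[g\uhr\ell]_S=\emptyset$ using the modulus $\gamma^*$ from~(i), halting when a unique survivor remains. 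Both are standard moves in computable analysis; the paper's version gives an explicit a~priori bound on how much of $g$ must be read, while yours avoids computing the inverse modulus $h$ altogether and lets the closedness of each $\Phi([\sigma]_K)$ do the work dynamically. Either way the uniformity in $K,H,S,P_\Phi$ is clear.
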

 
  Intuitively, the   function $g$ in (i)   computes the ``$\delta$" in the definition of uniform continuity from the ``$\epsilon$": if $\delta= 1/n$ we have $\epsilon = 1/g(n)$.    
\begin{lemma} \label{lem:lahm2} Let $T,S$ be c.l.c.\  trees. Suppose  a function $\Phi\colon [T]\to [S] $ is computable via a partial computable function $P_\Phi $. Given  code numbers $u, w$, one can decide whether $\Phi(\+ K^T_u) \sub \+ K^S_w$. 
\end{lemma}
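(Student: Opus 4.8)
The plan is to reduce the inclusion $\Phi(\+ K^T_u)\sub\+ K^S_w$ to a finite computation, exploiting that $\+ K^T_u$ is compact and that $\Phi$ has a computable modulus of continuity on it.

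First I would make two reductions. Since $\+ K^T_u=\bigcup_{\eta\in u}[\eta]_T$, the inclusion holds iff $\Phi([\eta]_T)\sub\+ K^S_w$ for every $\eta\in u$ (with $u=\ES$ being trivial), so it suffices to decide this for a single cone $[\eta]_T$. Let $N$ be the maximal length of a string occurring in $w$, computable from $w$. For any $g\in[S]$ one has $g\in\+ K^S_w$ iff $\tau\prec g$ for some $\tau\in w$, and since $|\tau|\le N$ this depends only on the initial segment $g\uh N$; consequently, for a string $\beta$ of $S$ with $|\beta|=N$ the relation $[\beta]_S\sub\+ K^S_w$ is decidable — it holds iff $\beta$ extends some element of $w$, a special case of \cref{lem: comp index tree}(ii).

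The main step is to finitise the uncountable domain $[\eta]_T$. Every $f\in[\eta]_T$ has $f(0)=\eta(0)$, so by \cref{def:comploccompact} the subtree $T_\eta\sub T$ of strings comparable with $\eta$ has no leaves and computably bounded branching, and $[T_\eta]=[\eta]_T$. Applying \cref{lem:lahm}(i) to $\Phi\uh [T_\eta]$ I would obtain a computable modulus $\mu$ such that a computation of $\Phi(f)(n)$ queries $f$ only below $\mu(n)$, uniformly for $f\in[\eta]_T$. Put $L=1+\max_{n<N}\mu(n)$. Then there are only finitely many $\sigma\in T$ with $\eta\preceq\sigma$ and $|\sigma|=L$, and they can be listed effectively using the branching bound; for each of them $P_\Phi(\sigma,n)$ converges for all $n<N$, and the string $\beta_\sigma:=\la P_\Phi(\sigma,0),\dots,P_\Phi(\sigma,N-1)\ra$ of $S$ satisfies $\Phi(f)\uh N=\beta_\sigma$ for every $f\in[\sigma]_T$. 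Since $T$ has no leaves, $[\eta]_T=\bigcup_\sigma[\sigma]_T$ with each $[\sigma]_T\neq\ES$. (Alternatively, a finite cover with this property could be found by a direct compactness search, without naming $\mu$ explicitly.)

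Finally, the decision procedure: output ``yes'' iff $\beta_\sigma$ extends an element of $w$ for every $\sigma$ as above. For correctness, fix one such $\sigma$; for any $f\in[\sigma]_T$ we have $\Phi(f)\in\+ K^S_w$ iff some $\tau\in w$ satisfies $\tau\preceq\Phi(f)\uh N=\beta_\sigma$, a condition not depending on $f$, so $\Phi([\sigma]_T)\sub\+ K^S_w$ iff $\beta_\sigma$ extends an element of $w$ (the equivalence being non-vacuous since $[\sigma]_T\neq\ES$). Taking the union over the finitely many $\sigma$ yields $\Phi([\eta]_T)\sub\+ K^S_w$ iff the test succeeds for all of them, and every ingredient ($N$, the branching bound, $\mu$, $L$, the list of the $\sigma$, the $\beta_\sigma$ via $P_\Phi$, and the membership test) is computable. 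I expect the only real obstacle to be this finitisation step: one must pass honestly from $[T]$ to a finite set of strings, which forces one to restrict to the compact piece $[\eta]_T$ — so that branching is bounded, in contrast with the root of $T$ — and to use the uniform computable modulus furnished by \cref{lem:lahm}.
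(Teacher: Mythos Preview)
Your proposal is correct. The paper itself does not include a proof of this lemma—it defers to \cite[Section~6]{Melnikov.Nies:22}—but your argument is the expected one, and is exactly the natural generalization of the paper's own proof of the special case \cref{lem: comp index tree}(ii) (where $\Phi$ is the identity): the role of the c.l.c.\ bound $H$ there is now played by $H$ together with the computable modulus supplied by \cref{lem:lahm}(i). One cosmetic remark: take $L\ge|\eta|$ as well (e.g.\ replace $L$ by $\max(L,|\eta|)$), so that the level-$L$ strings extending $\eta$ are guaranteed to exist.
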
 

To prove \cref{prop: SL2} below, we will need a criterion on whether,  given  a computable subtree   $S$  of a c.l.c.\ tree $T$ (where $S$ potentially has   leaves),  the maximally pruned subtree  of $S$  with the same set of paths is computable.
 \begin{prop}  \label{prop: prune} Let $T$ be a c.l.c.\ tree. Let $S $ be a computable subtree of $T$, and suppose that there is a uniformly computable dense sequence $(f_i)\sN i$ in $[S]$. Then the tree $\wt S= \{\sss  \colon [\sss]_S \neq \ES\}$ is   decidable. (It follows that $\wt S$ is c.l.c. Of course, $[\wt S] = [S]$.)  \end{prop}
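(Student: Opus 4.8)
The plan is to show that $\wt S$ is simultaneously c.e.\ and co-c.e., hence decidable. Write $S_\sss = \{\tau \in S \colon \sss \preceq \tau\}$ for the full subtree of $S$ above a string $\sss$, and note that $\wt S=\{\sss \colon [\sss]_S\neq\ES\}$ lies inside $S$ automatically, since no path of $S$ extends a string outside $S$. The c.e.\ direction uses the dense sequence: density of $(f_i)\sN i$ in $[S]$ means precisely that every nonempty basic open set $[\sss]_S$ of $[S]$ contains some $f_i$, while conversely $f_i\in[\sss]_S$ forces $\sss\in S$. Hence $\sss\in\wt S\iff\exists i\,(\sss\prec f_i)$, and since the sequence is uniformly computable the relation $\sss\prec f_i$ is decidable (compute $f_i$ up to length $|\sss|$ and compare), giving a $\Sigma^0_1$ definition of $\wt S$. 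Note this already certifies $\ES\in\wt S$, as $\ES\prec f_0$.

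For the co-c.e.\ direction I would use that $T$ is c.l.c. Fix a \emph{nonempty} $\sss\in S$. Every $\tau\succeq\sss$ on $T$ satisfies $\tau(i)\le H(\sss(0),i)$ for $i<|\tau|$, so $S_\sss$ is finitely branching with computable branching bounds. By K\"onig's lemma, $[\sss]_S=\ES$ iff $S_\sss$ is finite iff there is an $N$ such that $S$ contains no string of length $N$ extending $\sss$; and this last condition is decidable in $(\sss,N)$, because the candidate extensions of length $N$ form a finite set that can be listed using $H$ and tested for membership in the computable set $S$. Therefore $\sss\notin\wt S$ is $\Sigma^0_1$ for nonempty $\sss$; for $\sss=\ES$ it is simply false (that case was settled above), so the complement of $\wt S$ is c.e.

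Combining the two halves, $\wt S$ is computable. The parenthetical claims are then routine: $\wt S$ is downward closed (if $[\sss]_S\neq\ES$ and $\rho\prec\sss$ then $[\rho]_S\supseteq[\sss]_S$) and leafless (truncate a witnessing path one level lower), it inherits from $T$ both the bound $H$ and the property that only the root branches infinitely, so it is c.l.c.; and $[\wt S]=[S]$, since $X$ is a path of $S$ exactly when every initial segment of $X$ lies in $\wt S$. The step I expect to be the main obstacle is the co-c.e.\ direction: one must exploit the local compactness of $T$ to get finite branching above each nonempty node, so that K\"onig's lemma turns the genuinely infinitary statement ``$[\sss]_S$ is empty'' into the finitary, semidecidable statement ``some level of $S_\sss$ is empty'', and one must dispatch the root separately, where the given dense sequence --- not K\"onig's lemma --- is what certifies $\ES\in\wt S$.
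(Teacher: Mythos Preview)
Your proof is correct and follows essentially the same approach as the paper: use density of $(f_i)$ to witness $\sss\in\wt S$, use the c.l.c.\ bound $H$ together with K\"onig's lemma to witness $\sss\notin\wt S$ for nonempty $\sss$, and handle $\sss=\ES$ separately. The paper merely packages the two searches into a single dovetailed loop (at stage $t$ check whether $\sss\prec f_t$ or whether no length-$t$ extension of $\sss$ lies in $S$), whereas you present them separately as showing $\wt S$ is both c.e.\ and co-c.e.; the content is the same.
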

\begin{proof}   Given a   string $\sss \in T$,  if $\sss = \ES$ then $\sss \in \wt S$. Assuming $\sss \neq \ES$, we can compute the least   $t\in \NN$ such that $\sss \prec f_t$, or $\rho \not \in S$ for each $\rho \in T$ of length $t$ such that $ \rho \succeq \sss$; the latter condition can be decided by the hypothesis that  $T$ is c.l.c. Clearly $\sss \in \wt S$ iff the former condition holds. 
\end{proof}

\section{Defining computably t.d.l.c.\  groups via  Baire presentations} \label{s:Baire} 
 Each  totally disconnected Polish space  $X$ is homeomorphic to $[T]$ for some tree $T \sub \NN^*$; see  \cite[I.7.8]{Kechris:95}. Clearly  $X$ is locally compact iff for each $f \in [T]$ there is an $n$ such that the tree above $f\uhr n$ is finitely branching;   we can then assume that only the root can be infinitely branching.  This suggests to work, in the algorithmic setting,  with a domain of the presentation that has   the form $[T]$ for a  computably locally compact  tree $T$, and   require that the group operations on $[T]$ be computable according to \cref{def:computable 1}. The same approach   would work for other types of algebraic structure defined on $[T]$ for a computably  locally compact tree $T$, e.g. computably t.d.l.c.\ rings.

 \begin{definition}\label{def:main1}  
A \emph{computable Baire~presentation}  
    is a topological group  of  the form  $H= ([T], \Op, \Inv)$ such that
  \begin{enumerate} \item  $T$ is computably   locally compact as defined in~\ref{def:comploccompact};
  \item  $\Op\colon [T] \times [T] \to [T]$ and $\Inv \colon [T] \to [T]$ are computable.
    \end{enumerate}
    We say that a t.d.l.c.\ group $G$ is \emph{computably t.d.l.c.}  (via  a Baire presentation) if  $G \cong H$ for such a group $H$.    \end{definition}

\begin{example} \label{prop: SL2} Let $p$ be a prime, and let $n \ge 2$.   Let $\QQ_p$   denote the ring of $p$-adic numbers. (i)  The t.d.l.c.\ ring $\QQ_p$ has a computable Baire presentation. (ii) The   t.d.l.c.\ group $\SL_n(\QQ_p)$  has a computable  Baire presentation. 
\end{example}
\begin{proof}
 (i)
Let $ Q$ be the tree of strings $\sss\in \NN^*$ such that    all entries, except possibly the first, are   among $\{0, \ldots, p-1\}$,  and
  $r0 \not \preceq \sss$ for each $r>0$.    We think of a  string $r \ape \sss \in Q$ as denoting the rational      $p^{-r} n_\sss\in \ZZ[1/p]$, where  $n_\sss$ is  the number which has  $\sss$ as a   $p$-ary expansion, written in reverse order: \bc $n_\sss = \sum_{i< \sssl} p^i \sss(i)$. \ec   We allow the case that $\sss$ ends in $0$. The   condition that  $r0 \not \preceq \sss$   for each $r>0$ says    that   $p$ does not divide~$n_\sss$.   
%
%
  
   For instance,  let $p=3$;  then \bc $(3,1,0,2)$ denotes the rational   $3^{-3}\cdot  (1+ 2 \cdot 9)= 19/27$.  \ec 

For  the addition operation, consider an oracle Turing machine with two oracle tapes starting with notations $r\sss$ and $s\tau$ of numbers $p^{-r}m$ and $p^{-s}n$. Say $r \le s$. Then $p^{-r}m + p^{-s}n = p^{-s} (p^{s-r} m + n)$. Clearly the machine can  output a string denoting $p^{-r}m + p^{-s}n$. To continue the example above,  if the machine sees tapes starting with $(3,1,0,2)$ and $(4,1,2,0,0)$, it will internally replace the first string by $(4,0,1,0,2)$, and then keep the leading 4 and carry out the   addition  modulo  $3^4$ of the numbers $57$ and $10$ with base 3 expansions $(0,1,0,2) $ and $(1,2,0,0)$ respectively, resulting in $(4, 1, 0, 1,2)$. (This corresponds to  $19/27 + 10/81 = 67/81$.)

A similar argument works for multiplication.  It is important that we allow improper expansions i.e. strings ending in zeros as in the example above, so that  the operation of the machines is monotonic.

%
%
%
%
%
%
%
%
%
%
%
%
%
%
%
%

(ii) We   now provide a computable  Baire presentation $([T], \Op, \Inv)$ of  $\SL_n(\QQ_p)$.    Let $T$ be the computable  tree that is an  $n^2$-fold ``power" of $Q$. More precisely,
  $T = \{ \sss \colon \forall i < n^2 \, [ \sss^i \in Q]\}$, where $\sss^i$ is the string of entries of $\sss$ in positions of the form $kn^2 + i$ for some  $k,i \in \NN$. Note that $T$ itself is not c.l.c.\ as nodes up to level $n^2-1$ are   infinitely branching. However, we can assume it is by skipping the levels $1, \ldots, n^2-1$. Clearly, $[T]$ can be naturally identified with the matrix algebra $M_n(\QQ_p)$. By the  computability of  the ring  operations on $\QQ_p$ as verified above, the matrix product is computable as a function $[T] \times [T]\to [T]$, and the   function $\det \colon [T] \to [Q]$ is computable.  
  
  Basic computability theory shows   that for any c.l.c.\ trees $T$ and $R$, any computable path $f$ of $R$, and any computable function $\Phi \colon [T]\to [R]$, there is a computable subtree $S $ of $T$ such that $[S]$ equals the pre-image $\Phi^{-1}(f)$.    Applying  this to the function $\det \colon [T] \to [Q]$ and  the path  $f = 01000\ldots $  that denotes $1 \in \QQ_p$, we obtain a computable subtree $S$ of $T$ such that $[S]$ can be identified with $SL_n(\QQ_p)$.   Note that $S$ could have dead ends. We fix this next:
  
  It is well-known  that $SL_n(\ZZ[1/p])$ is dense in $SL_n(\QQ_p)$. This is a special case of strong approximation for algebraic groups (see \cite[Ch.\ 7]{Rapinchuk.Platonov:93}), but can also be seen in an elementary way using Gaussian elimination. The paths on $S$ corresponding to matrices in $SL_n(\ZZ[1/p])$ are precisely the ones that are    $0$ from some point on. Clearly there is a computable listing $(f_i)$ of these  paths.   So by \cref{prop: prune} we can  replace $S$ by a c.l.c.\ tree $\wt S$ such that $[\wt S] = [S]$.

 To obtain a computable Baire presentation based on $\wt S$, note that matrix multiplication on $[\wt S]$ is computable  as the restriction of matrix multiplication on $[T]$. To define the matrix inversion operation $\Inv$, we use the fact that   the inverse of a matrix with determinant $1$  equals its  adjugate matrix; the latter can be obtained  by computing determinants on minors.
  \end{proof}

\section{Defining computably t.d.l.c.\  groups via meet groupoids}
  This section provides the detail for  the second type (Type M) of computable presentations of t.d.l.c.\ groups   described in Section~\ref{s: types of presentations}.
 \subsection{The meet groupoid of a t.d.l.c.\  group}  Intuitively, the notion of a  {groupoid} generalizes the notion of a group by allowing that the binary operation is partial.  A groupoid   is  given by a domain $\+ W$ on which     a unary operation $(.)^{-1}$ and a partial binary operation, denoted  by ``$\cdot $", are defined. These operations satisfy the following conditions:
 \bi \item[(a)] associativity in the sense that $(A \cdot B)\cdot C= 
 A \cdot (B\cdot C)$,  with either both sides or no side defined (and so the parentheses can be omitted);  \item[(b)]  $A\cdot A^{-1}$ and $A^{-1}\cdot A$ are always defined; \item[(c)] if $A\cdot B$ is defined then $A\cdot B\cdot B^{-1}=A$ and $A^{-1}\cdot A\cdot  B =B$.\ei

It follows from (c) that a groupoid satisfies the left and right cancellation laws. One says that an element $U\in \+ W$ is \emph{idempotent} if $U\cdot U =U$. Clearly this implies that $U= U \cdot U^{-1}= U^{-1} \cdot U$ and so $U= U^{-1}$ by cancellation. Conversely, by (c) every element of the form $A\cdot A^{-1}$ or $A^{-1}\cdot A$ is idempotent.
 \begin{definition} \label{def:MeetGroupoid} A \emph{meet groupoid} is a groupoid  $(\+ W, \cdot , {(.)}^{-1})$ that is also a meet semilattice  $(\+ W, \cap  ,\ES)$ of which  $\ES$ is the  least element.     Writing $A \sub B \LR A\cap B = A$ and letting the operation $\cdot$ have preference over $\cap$,
it satisfies the conditions   \bi \item[(d)]   $\ES^{-1} = \ES = \ES \cdot \ES$,    and  $\ES \cdot A$ and $A \cdot \ES$ are undefined for each $A \neq \ES$,  
\item[(e)]   if $U,V$ are idempotents such that $U,V \neq \ES$, then   $U  \cap V \neq \ES$,  \ei

 \bi \item[(f)] $A \sub B \LR A^{-1} \sub B^{-1}$, and


\item[(g)]   if  $A_i\cdot B_i$ are defined ($i= 0,1$) and $A_0 \cap A_1 \neq \ES \neq B_0 \cap B_1$, then \bc $(A_0  \cap A_1)\cdot (B_0 \cap B_1) =  A_0 \cdot  B_0 \cap A_1 \cdot B_1 $. \ec   
 \ei
 From (g) it follows and that the groupoid operations are monotonic: if  $A_i\cdot B_i$ are defined ($i= 0,1$) and $A_0 \sub A_1, B_0 \sub B_1 $, then  $A_0 \cdot B_0 \sub A_1 \cdot B_1$.
 Also,  if $U$ and $V$ are idempotent,  then so is $U \cap V$ (this can also be verified  on the basis of  (a)-(f) alone).

 For  meet groupoids $\+ W_0, \+ W_1$, a bijection $h \colon \+ W_0 \to \+ W_1$ is an \emph{isomorphism} if it preserves the three operations. 
  Given a meet groupoid $\+ W$, the letters $A,B, C$ will  range over  general elements of $\+ W$, and the letters  $U,V,W$ will range  over idempotents of $\+ W$.  \end{definition}

We use set theoretic notation for the meet semilattice because  for the motivating  examples  of meet groupoids the intersection symbol means the usual. Note that  the intersection of two cosets is empty, or again a coset. 
\begin{definition} Let $G$ be a t.d.l.c.\  group.  We define a  meet groupoid $\+ W(G)$.  Its  domain consists of  the compact open  cosets in $G$ (i.e., cosets of compact open subgroups of $G$), as well as the empty set.  
 We define $A\cdot B$ to be the usual product $AB$ in case that  $A= B= \ES$, or $A$ is a left  coset of a subgroup $V$ and $B$ is a right  coset of $V$; otherwise $A \cdot B$ is undefined.  \end{definition}

 \begin{fact}   \label{fact:WGGW} $\+ W(G)$ is a meet groupoid  with the groupoid operations $\cdot$ and   $A \to A^{-1}$,  and the usual intersection operation $\cap$. \end{fact}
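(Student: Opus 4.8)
The plan is to isolate a handful of elementary structural facts about compact open cosets and then read off the meet-groupoid axioms (a)--(g) of \cref{def:MeetGroupoid}, with the distributivity axiom (g) being the only step that is more than bookkeeping; indeed most of the care goes into setting up the bookkeeping about left and right subgroups so that (g) becomes a one-line translation argument.

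First I would record the structural preliminaries. A nonempty compact open coset $A$ can be written $A=gH$ with $H$ a compact open subgroup; then $A^{-1}A=H$ and $AA^{-1}=gHg^{-1}$ are compact open subgroups depending only on $A$, so $A$ is at once a left coset of its \emph{right subgroup} $A^{-1}A$ and a right coset of its \emph{left subgroup} $AA^{-1}$. It follows that $A^{-1}$ is again a compact open coset, with the two subgroups swapped, and that ``$A\cdot B$ is defined'' (for nonempty $A,B$) means exactly that the right subgroup of $A$ equals the left subgroup of $B$; calling $V$ this common subgroup and writing $A=gV$, $B=Vh$, we get $A\cdot B=gVh=(gh)(h^{-1}Vh)$, again a nonempty compact open coset, whose left subgroup is that of $A$ and whose right subgroup is that of $B$. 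For the semilattice part, if $A\cap B\ne\ES$ I would pick $c\in A\cap B$ and observe $A\cap B=c(A^{-1}A\cap B^{-1}B)$, a coset of the compact open subgroup $A^{-1}A\cap B^{-1}B$; thus $\+ W(G)$ is closed under $\cap$, and since the order $A\sub B$ and the operation $\cap$ are literally set inclusion and intersection, $(\+ W(G),\cap,\ES)$ is a meet semilattice with least element $\ES$. Finally I would note that the idempotents of $\+ W(G)\setminus\{\ES\}$ are precisely the compact open subgroups of $G$: such a subgroup is clearly idempotent, and conversely $A=A^{-1}$ together with $A\cdot A=A$ forces $1\in A$, whence $A$ is a subgroup.

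With these in hand the axioms are routine. Axioms (a)--(c) reduce to the associativity and cancellation laws of $G$ plus the observation that the relevant products are simultaneously defined: e.g.\ for (a) both $(A\cdot B)\cdot C$ and $A\cdot(B\cdot C)$ are defined exactly when the right subgroup of $A$ is the left subgroup of $B$ and the right subgroup of $B$ is the left subgroup of $C$, and then they agree since set multiplication in $G$ is associative; for (c), if $A\cdot B$ is defined with common subgroup $V$ then $A\cdot B\cdot B^{-1}=A(BB^{-1})=AV=A$, and symmetrically $A^{-1}\cdot A\cdot B=B$; (b) holds because $A$ and $A^{-1}$ have matching subgroups by construction. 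Axiom (d) is the convention on $\ES$ built into the definition of $\cdot$; (e) is the fact that two nonempty compact open subgroups meet in a compact open subgroup containing $1$; (f) is immediate from inversion being a bijection of $G$.

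This leaves (g), which I expect to be the main (indeed only) obstacle, though a mild one. Assume $A_i\cdot B_i$ are defined ($i=0,1$) and $A_0\cap A_1\ne\ES\ne B_0\cap B_1$, and let $U_i$ be the common subgroup witnessing $A_i\cdot B_i$. Choosing $c\in A_0\cap A_1$ and $d\in B_0\cap B_1$ I would write $A_i=cU_i$, $B_i=U_id$, so $A_iB_i=cU_id$. By the preliminaries $A_0\cap A_1=c(U_0\cap U_1)$ and $B_0\cap B_1=(U_0\cap U_1)d$, and $U_0\cap U_1$ is both the right subgroup of $A_0\cap A_1$ and the left subgroup of $B_0\cap B_1$, so $(A_0\cap A_1)\cdot(B_0\cap B_1)$ is defined and equals $c(U_0\cap U_1)d$; on the other side, $A_0B_0\cap A_1B_1=cU_0d\cap cU_1d=c(U_0\cap U_1)d$ because left translation by $c$ and right translation by $d$ are bijections of $G$. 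The two coincide. Monotonicity of $\cdot$ and closure of the idempotents under $\cap$ then follow formally from (a)--(g), completing the verification that $\+ W(G)$ is a meet groupoid.
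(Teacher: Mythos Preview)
Your verification is correct. The paper itself states \cref{fact:WGGW} without proof, treating it as routine; the only nontrivial axiom, (g), is sketched earlier (in the locally Roelcke precompact section) by a slightly different argument: rather than your direct translation computation $cU_0d\cap cU_1d=c(U_0\cap U_1)d$, the paper observes that the left-hand side is contained in the right as a set product, and that both sides are right cosets of the same subgroup $U_0\cap U_1$, hence equal. Your route is marginally more explicit; the paper's is marginally more conceptual. Either works.
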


We will use the usual group theoretic terminology  for elements   of  an abstract meet groupoid $\+ W$. If $U$ is an  idempotent of $\+ W$ we call $U$ a \emph{subgroup},   if $AU = A$ we call $A$ a \emph{left  coset} of $U$, and if $UB= B$ we call $B$ a \emph{right coset} of $U$. Based on the axioms, one can verify that if $U \sub V$ for subgroups $U,V$, then the map $A \mapsto A^{-1}$ induces  a bijection between  the left cosets and the right cosets of $U$ contained in~$V$. 
 
 {\small    We note that  
$\+ W(G)$ satisfies the    axioms of inductive groupoids   defined in Lawson~\cite[page~109]{Lawson:98}.    See~\cite[Section~4]{LogicBlog:20} for more on an axiomatic approach to meet groupoids.  	\begin{remark}  It is well-known~\cite{Higgins:71} that one can view  groupoids as  small categories in which every morphism has an inverse.  The elements of the groupoid are the morphisms of the category. The idempotent morphisms   correspond to the  objects of the category.  One has $A\colon U \to V$ where $U= A \cdot A^{-1}$ and $V= A^{-1} \cdot A$.
Thus, in  $\+ W(G)$, 
  $A\colon U \to V$ means that     $A$ is  a right coset of $U$ and a  left coset of $V$.    \end{remark}

 The idea to study appropriate Polish groups via an algebraic structure on their open cosets is due to   Katrin Tent,  and first appeared in  \cite{Kechris.Nies.etal:18}.    This idea was further elaborated in a paper  by Nies, Schlicht and Tent on the complexity of the  isomorphism problem for oligomorphic groups~\cite{Nies.Schlicht.etal:21}. There,       approximation     structures are used that are given by   the       ternary relation  ``$AB \sub C$", where $A,B,C$ are certain open cosets.  They are called ``coarse groups". 
In the present work, it will be important that we  have explicit access to  the combination of the groupoid and the meet semilattice structures (which coarse groups don't provide).  Coarse groups are too ``coarse" an algebraic structure to analyse algorithmic aspects of t.d.l.c.\ groups.}

The meet groupoid  $\+ W(G)$ might turn out to be a useful tool for studying $G$, apart from  algorithmic considerations.  For a locally compact group $G$, the group $\Aut(G)$  becomes a Polish group via the Braconnier topology, given by the sub-basis of identity neighbourhoods of the form 
\bc $\mathfrak A(K,U) = \{ \aaa \in \Aut(G)  \colon \, \forall x \in K  [\aaa(x) \in Ux \lland \aaa^{-1}(x) \in Ux] \} $, \ec
where $K$ ranges over the compact subsets of $G$, and $U$ over the  identity neighbourhoods of $G$.  As noted in \cite[Appendix A]{Caprace.Monod:11}, $\Aut(G)$ with this topology is Polish (assuming that  $G$ is countably based). For a t.d.l.c.\ group $G$, the following  shows that      $\Aut(G)$   can be viewed as    the automorphism group of a countable structure.

\begin{prop} \label{prop: Braconnier} Let $G$ be a t.d.l.c.\ group. The group $\Aut(G)$ with the Braconnier topology is topologically isomorphic to $\Aut(\+ W(G))$, via the  map $\Gamma$  that sends  $\aaa \in \Aut(G)$ to its action on $\+ W(G)$, that is $B \mapsto \aaa(B)$.  \end{prop}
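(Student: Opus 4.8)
The plan is to show that $\Gamma$ is a well-defined group homomorphism, that it is a bijection, and that it is a homeomorphism, treating the two topologies via their natural sub-bases. First I would check that $\Gamma$ is well-defined: if $\aaa \in \Aut(G)$ then $\aaa$ sends a compact open subgroup to a compact open subgroup, hence a compact open coset to a compact open coset, and $\aaa(\ES) = \ES$; moreover $\aaa$ preserves inclusion, intersection, inversion of cosets, and the partial product $A \cdot B$ (since $\aaa(AB) = \aaa(A)\aaa(B)$ and $\aaa$ carries the ``$A$ a left coset of $V$, $B$ a right coset of $V$'' configuration to the corresponding configuration for $\aaa(V)$). So $\Gamma(\aaa) \in \Aut(\+ W(G))$, and clearly $\Gamma(\aaa\bbb) = \Gamma(\aaa)\Gamma(\bbb)$. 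Injectivity of $\Gamma$ follows because a t.d.l.c.\ group $G$ has a neighbourhood basis of compact open subgroups (van Dantzig), so every element $g \in G$ is the intersection of the compact open cosets containing it; thus an automorphism of $\+ W(G)$ determines the action on points, and $\Gamma(\aaa) = \id$ forces $\aaa = \id$.

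The substantive step is surjectivity: given $\theta \in \Aut(\+ W(G))$, I must produce $\aaa \in \Aut(G)$ with $\Gamma(\aaa) = \theta$. The idea is that $\theta$ permutes the idempotents (subgroups) preserving $\cap$ and the coset structure; for a point $g \in G$, the family $\{A : g \in A\}$ of compact open cosets containing $g$ is a filter in the meet semilattice closed under the relevant operations, and its $\theta$-image is again such a filter, which by local compactness and compactness of the relevant cosets has a single point $\aaa(g)$ in its intersection (nonemptiness uses that a decreasing sequence of nonempty compact cosets has nonempty intersection; uniqueness uses the Hausdorff/van Dantzig property). One then checks $\aaa$ is a bijection of $G$ (its inverse is built from $\theta^{-1}$), is a group homomorphism (because $\theta$ respects $\cdot$: if $g \in A$, $h \in B$ with $A$ a left coset and $B$ a right coset of a common subgroup, then $gh \in A\cdot B$, and one arranges such representations with $A, B$ shrinking to $g, h$), and is a homeomorphism (it maps the basic open set $A$ to $\theta(A)$, both open). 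Hence $\aaa \in \Aut(G)$ and by construction $\Gamma(\aaa) = \theta$. This filter-to-point argument, and in particular verifying that $\aaa$ is multiplicative rather than merely a bijection, is where the real work lies.

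Finally, for the topological claim I would compare sub-bases. On the $\Aut(G)$ side the Braconnier sub-basic neighbourhoods are the $\mathfrak A(K,U)$; since $K$ can be covered by finitely many cosets of $U$ and the condition $\aaa(x) \in Ux$ for all $x \in K$ is equivalent to $\aaa$ fixing (setwise) each of finitely many compact open cosets of $U$, each $\mathfrak A(K,U)$ is, under $\Gamma$, exactly a finite intersection of stabilisers $\{\theta : \theta(A) = A\}$ of elements $A \in \+ W(G)$ — which is precisely a basic neighbourhood of the identity in $\Aut(\+ W(G))$ with its permutation-group topology. Conversely the pointwise stabiliser of a single $A \in \+ W(G)$ contains, and is contained in, sets of the form $\mathfrak A(K,U)$ for suitable $K$ (take $K = A$) and $U$ (the subgroup of which $A$ is a coset, intersected appropriately). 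Matching these two sub-bases up to the identity, and invoking that $\Gamma$ is an algebraic isomorphism so it suffices to compare neighbourhoods of the identity, gives that $\Gamma$ is bicontinuous. I expect the filter/point reconstruction in the surjectivity step to be the main obstacle; the homomorphism and homeomorphism verifications are routine once the correspondence between points and coset filters is set up carefully.
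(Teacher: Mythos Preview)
Your proposal is correct and follows essentially the same route as the paper: injectivity via van Dantzig, surjectivity by reconstructing each point of $G$ from the filter of compact open cosets containing it (the paper writes this inverse as $\Delta$ and verifies the same multiplicativity and homeomorphism properties you list), and continuity by matching the Braconnier sub-base against pointwise stabilisers in $\Aut(\+ W(G))$. One small slip: your claim that $\mathfrak A(K,U)$ corresponds \emph{exactly} under $\Gamma$ to a finite intersection of coset-stabilisers is too strong---if $K$ is not itself a union of right cosets of $U$ then fixing the covering cosets is strictly stronger than membership in $\mathfrak A(K,U)$---but only the containment is needed for bicontinuity, so the argument goes through.
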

\begin{proof} It is clear that  $\Gamma$ is an injective  group homomorphism. 
	To show that  $\Gamma$ is continuous, consider an identity neighbourhood of $\Aut(\+ W(G))$, which we may assume to have  the form $\{\beta \colon \, \beta(A_i)= A_i, i = 1, \ldots,  n\}$ where $A_i \in \+ W(G)$.  Let $U = \bigcap_i A_i A_i^{-1}$ and $K= \bigcup A_i$. Then $\aaa \in \mathfrak A(K,U)$ implies $\aaa(A_i) = A_i$ for each $i$. 

To show $\Gamma$ is onto, we explicitly describe its inverse $\Delta$. Recall  our convention  that  the variable $U$  ranges over the compact open subgroups of $G$.  Note that  $\bigcap_U gU= \bigcap _U Ug = \{g\}$. Since $\beta$ is an automorphism and the $\beta(gU)$ are compact,  $L=\bigcap_U \beta(gU)$ is non-empty.  If $|L|> 1$ then there are  $C,D\in \+ W(G)$ such that $C\cap D = \ES$, and  $C \cap L \neq \ES \neq D\cap L$. By compactness this implies $\beta^{-1} (C) \cap \{ g\} \neq \ES \neq \beta^{-1} (D) \cap \{ g\}$, a contradiction. So   \bc $\Delta(\beta)(g) = h \LR h \in \bigcap_ U \beta(gU)$\ec
	defines a map $\Delta\colon \Aut(\+ W(G)) \to G^G$.   \begin{claim} Let $\ol \beta =\Delta(\beta)$. We have $\ol \beta \in \Aut(G)$. \end{claim}
	Clearly $\Delta(\beta^{-1})= \ol \beta^{-1}$, so $\ol \beta$ is a permutation of $G$. Ignoring for the moment the difference between group elements and the singletons containing them, we have \bc  $\ol \beta (g^{-1})= \bigcap_U \beta(g^{-1}U) = \bigcap_V \beta(Vg^{-1} )= (\bigcap_V \beta(gV))^{-1}= (\ol \beta(g))^{-1}$. \ec
	Also one can verify that $\beta$ preserves the binary group operation, using that \bc $ \bigcap_U gU \bigcap_V hV= \bigcap_V ghV = \bigcap_V (gV^{h^{-1}}hV)$.  \ec If $g \in V$ then $\ol \beta(g) \in \beta(V)$, so $\ol \beta$ is open. Arguing the same for $\beta^{-1}$, we conclude that $\ol \beta $ is a homeomorphism of $G$.  This shows the claim.

	\begin{claim} If  $\beta \in \Aut(\+ W(G))$ and $B \in \+ W(G)$, then   $\Gamma(\Delta(\beta))(B) = \beta(B)$.  \end{claim} 
	By the definitions of $\Gamma$ and $\Delta$, we have   
	\bc $h \in \Gamma(\Delta(\beta))(B) \LR \exists g \in B [ h \in \bigcap_U\beta(gU)]$. \ec 
	Letting $V = B^{-1}B$, we have $B = gU$, so    $\Gamma(\Delta(\beta))(B) \sub \beta(B)$. For the converse inclusion, suppose that $h \not \in \Gamma(\Delta(\beta))(B)$. Then for each $g \in B$ there is $U_g$ such that $h \not \in \beta(g U_g)$. Since $B$ is compact and $B \sub \bigcup_{g \in B} U_g$, there is a finite set $S \sub G$ such that  $B \sub \bigcup_{g \in S} gU_g$.  
	\end{proof}

\subsection{Second definition of computably    t.d.l.c.\  groups}

 \begin{definition}  \label{def:comp_meet_groupoid} A meet groupoid $\+ W$ is called \emph{Haar computable}~if 
\bi \item[(a)]  its  domain is a computable subset $D$ of $\NN$; 

\item[(b)] the   groupoid and meet operations    are computable in the sense of \cref{def:computable}; in particular, the relation $\{ \la x,y \ra\colon \, x,y \in S \lland x\cdot y \text{ is defined}\}$   is computable;
 
 \item[(c)]     the partial function with domain contained in  $D \times D$ sending  a pair of subgroups $U, V\in \+ W $ to $|U:U\cap V|$ is   computable.   \ei
    \end{definition}
  Here   $|U \colon U \cap V|$  is  defined abstractly as the number of left, or equivalently right, cosets of the nonzero idempotent  $U \cap V$ contained in $U$;  we require implicitly that this number is always finite.  Note that by~(b), the partial order induced by the meet semilattice structure of $\+ W$  is computable. Also, (b) implies that being a subgroup is decidable when viewed as a property of elements of the domain  $S$; this is used in~(c). The condition (c)   corresponds to the computable bound  $H$ required in     Definition~\ref{def:comploccompact}.         For ease of reading we will say that $n \in D$ \emph{denotes} a coset $A$, rather than saying that~$n$ ``is" a coset. 
   
%
 \begin{definition}[Computably t.d.l.c.\  groups via meet groupoids] \label{Def2}Let $G$ be a   t.d.l.c.\  group. We say that $G$ is \emph{computably t.d.l.c.}\  via a meet groupoid  if    $\+ W(G)$ has a Haar computable       copy  $\+ W$.  In this context, we call $\+ W$ a computable presentation of $G$ (in the sense of meet groupoids).   \end{definition}


%
 \begin{remark} \label{rem:Haar computable}  In this setting, Condition (c) of \cref{def:comp_meet_groupoid} is equivalent to  saying that every Haar measure $\mu$  on $G$   that assigns a rational number to some compact open subgroup  (and hence  is rational-valued)  is computable on $\+ W$, in the sense that the function assigning   to a compact open coset $ A$  the rational $\mu(A)$ is computable. Consider left Haar measures, say. First suppose that (c) holds. Given $A$, compute the  subgroup $V$ such that $A= A\cdot V$, i.e., $A$ is a left coset of $V$.   Compute $W = U \cap V$. We have  $\mu(A) = \mu(V)= \mu(U) \cdot  |V:W|/|U:W|$. 

Conversely, if the Haar measure is computable on $\+ W$, then (c) holds because $|U\colon V|= \mu(U)/\mu(V)$.
\end{remark}

   For discrete groups, the condition (c) can be dropped, as the proof of the following shows.
 \begin{example} \label{ex:discrete computable} A discrete group  $G$ is computably t.d.l.c.\ via a meet groupoid 
 $\LR$ $G$ has a computable copy in the usual sense of  \cref{compStr}. \end{example}
 \begin{proof} For the  implication $\LA$,  we may assume that  $G$ itself is computable; in particular, we may assume that its domain is a computable subset of  $\NN$.  Each compact coset in  $G$  is finite, and hence can be represented by a strong index for a finite set of natural numbers. Since the group operations are computable on the domain, this implies that   the meet groupoid of $G$  has a  computable copy. It is then trivially Haar computable. 
 
 For the implication $\RA$, let $\+ W$ be a Haar computable copy of $\+ W(G)$.  Since $G$ is discrete, $\+ W$ contains a least subgroup $U$.  The set of left cosets of $U$ is computable, and forms a group with the groupoid and inverse operations.  This yields the required computable copy of $G$. \end{proof}
 
%
 
By $\mathbb{Q}_p$ we denote the additive group of the $p$-adics. By the usual definition of semidirect products (\cite[p.\ 27]{Robinson:82}),   $\ZZ \ltimes \QQ_p$ is the  group defined on the Cartesian product $\ZZ \times \QQ_p$ via the binary operation  $\la z_1, \aaa_1 \ra \cdot  \la z_2, \aaa_2\ra = \la z_1 + z_2, p^{z_2} \aaa_1  + \aaa_2\ra $. 
This turns $\ZZ \ltimes \QQ_p$ into   a topological group with  the product topology. 


\begin{example} \label{ex:Qp} For any prime $p$,   the  additive group  $\QQ_p$ and   the group $\ZZ \ltimes \QQ_p$  are  computably t.d.l.c.\ via a meet groupoid.     \end{example}

\begin{proof} 
  We begin with  the additive group $\QQ_p$. Note that  its    open proper  subgroups  are  of the form $U_r:= p^{r}\ZZ_p$ for some  $r\in \ZZ$. Let $C_{p^\infty}$ denote the Pr\"ufer group $\ZZ[1/p]/\ZZ$, where $\ZZ[1/p]= \{ z p^{-k} \colon \, z \in \ZZ \lland k \in \NN\}$. 
 For each $r$ there  is a canonical  epimorphism  $\pi_r\colon  \QQ_p \to C_{p^\infty}$ with kernel $U_r$: 
   if  $\aaa= \sum_{i=-n}^\infty s_ip^i$ where    $0 \le s_i < p$, $n\in \NN$, we have
 \bc $\pi_r(\aaa) = \ZZ + \sum_{i=-n}^{r-1} s_ip^{i-r}$; \ec 
  here an empty sum is interpreted as $0$. (Informally, $\pi_r(\aaa)$ is obtained by taking the  ``tail" of $\aaa$ from the position $r-1$ onwards to the last position, and shifting it in order  to represent     an element of $C_{p^\infty}$.)   So   each compact open coset in $\QQ_p$  can be uniquely written in  the form $D_{r,a}= \pi_r^{-1}(a)$ for some  $r \in \ZZ$ and $a \in C_{p^\infty}$. The domain $S\sub \NN$ of the  Haar computable copy $\+ W$ of $\+ W(\QQ_p)$ consists of natural numbers canonically encoding such pairs $\la r,a\ra$. They will be identified with the cosets they denote. 

The groupoid operations are computable because we have $D_{r,a}^{-1}= D_{r,-a}$, and $D_{r,a} \cdot D_{s,b}= D_{r, a+b}$ if $r=s$, and undefined otherwise.  
 It is easy to check that   $D_{r,a} \sub D_{s,b}$ iff $r \ge s$ and $p^{r-s}a=b$. So the inclusion relation is decidable.  We have  $D_{r,a} \cap D_{s,b}= \ES$ unless one of the sets  is contained in the other, so the meet operation is computable.   
 Finally,   for $r\le s$, we have $|U_r: U_s|= p^{s-r}$ which is computable.

Next, let  $G = \ZZ \ltimes \QQ_p$; we   build a  Haar computable copy $\+ V$ of $ \+ W (G)$. We will extend the listing $(D_{r,a})_{r \in \ZZ, a \in C_{p^\infty}}$ of compact open cosets in $\QQ_p$ given above.  For each   compact open subgroup of $G$, the projection onto $\ZZ$ is compact open, and hence  the trivial group.  So the only compact open subgroups of $G$ are of the form~$U_r$.  Let $g\in G$ be the generator of $\ZZ$  such that $g^{-1} \aaa g = p \aaa$ for each $\aaa \in \QQ_p$ (where $\ZZ$ and $\QQ_p$ are thought of as canonically embedded into $G$). Each compact open coset of $G$ has a unique  form $g^z D_{r,a}$ for some $z \in \ZZ$.  Formally speaking, the domain of the computable copy of $\+ W(G)$  consists of natural numbers encoding  the triples $\la z, r, a\ra$ corresponding to such cosets; as before they will be identified with the cosets they denote. 

 To show that the  groupoid and meet    operations are   computable, note that we have     $gD_{r, a } = D_{r-1, a} g  $ for each $r \in \ZZ, a \in C_{p^\infty}$, and hence  
 $ g^z D_{r, a } = D_{r-z, a}  g^z $ for each $z \in \ZZ$.
Given two cosets $g^v D_{r,a} $ and $ g^w D_{s,b}= D_{s-w, b}g^w$,  their composition is   defined iff $r=s-w$, in which case the result is $g^{v+w} D_{s,a+b}$. 
The inverse of $g^z D_{r,a}$ is $D_{r, -a}g^{-z} = g^{-z}D_{r-z, -a}$. 

To decide  the inclusion relation, note that we have $g^z D_{r,a} \sub g^w D_{s,b}$ iff $z=w$ and $D_{r,a} \sub D_{s,b}$, and otherwise, they are disjoint. Using this one can show that the meet operation is computable (by an argument that works in any computable meet groupoid $\+ V$): if $A_0, A_1 \in \+ V$,  $A_i \colon U_i \to V_i$, and $A_0,A_1$ are not disjoint, then  $A_0 \cap A_1$ is the unique $C \in \+ V$ such that $C \colon U_0 \cap U_1 \to V_0 \cap V_1$ and $C\sub A_0, A_1$.  Since $\+ W$ satisfies Condition (c) in \cref{def:comp_meet_groupoid}, and $\+V$  has no subgroups beyond the ones present in $ W$, we conclude that $\+ V$ is Haar computable. 
 \end{proof}
 

\section{Equivalence of the two types of computable presentations}
We show that  a t.d.l.c.\ group $G$ has  a computable presentation in the sense of Def.\ \ref{def:main1}  iff $G$ 
 has  a computable presentation in the sense of  Def.\ \ref{Def2}. 
 
 We need some preliminaries.
For strings $\sss_0, \sss_1 \in \NN^*$ of the same length $n$, let $\sss_0 \otimes \sss_1$ denote the string $\tau$ of that  length such that $\tau(k)= \la \sss_0(k), \sss_1(k)\ra$ for each $k<n$ (where $\la ., .\ra$ is a computable  pairing function, such as Cantor's).   

\begin{lemma} \label{lem: decide inclusion}  Let $G$ be computably t.d.l.c.\ via a computable Baire presentation $([T], \Op, \Inv)$. Recall the set $E_T$ of minimal  code numbers for compact open sets from  \cref{def:E}.

\n (i)  There is  a computable   function  $ I \colon E_T \to E_T$     such that for each $u  \in E_T$, one   has  $\+ K_{I(u)} = (\+ K_u)^{-1}$. 

\n (ii) For $u,v,w \in E_T$  one can decide whether $\+ K_u \+ K_v \sub \+ K_w$. 
\end{lemma}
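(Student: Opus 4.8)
The plan is to reduce both parts to \cref{lem:lahm2}, which already lets us decide whether $\Phi(\+ K^{T}_u) \sub \+ K^{S}_w$ for a computable map $\Phi$ of one variable, together with the elementary observation that the group operations send compact open sets to compact open sets.

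For (i): since $\Inv\colon [T]\to [T]$ is a computable bijection with $\Inv\circ\Inv = \id$, it is a self-inverse homeomorphism, so $(\+ K_u)^{-1} = \Inv(\+ K_u)$ is again compact open, hence has a unique minimal code number, which lies in the decidable set $E_T$. To compute $I(u)$ I would enumerate $w \in E_T$ and test the conditions $\Inv(\+ K_u)\sub \+ K_w$ and $\+ K_w \sub \Inv(\+ K_u)$. The first is decidable by \cref{lem:lahm2} with $\Phi=\Inv$ and $S=T$; applying the bijection $\Inv$ to both sides of the second shows it is equivalent to $\Inv(\+ K_w) \sub \+ K_u$, again decidable by \cref{lem:lahm2}. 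The unique $w\in E_T$ passing both tests satisfies $\+ K_w = (\+ K_u)^{-1}$, and we put $I(u)=w$; the search always terminates since $(\+ K_u)^{-1}$ has a code number, and for $\+ K_u = \ES$ it returns the code of $\ES$.

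For (ii): I would use the tensor construction recalled just before the statement. Let $T\otimes T = \{\sss_0\otimes\sss_1 \colon \sss_0,\sss_1\in T,\ |\sss_0| = |\sss_1|\}$. This is a computable tree without leaves, and it is c.l.c.: the only infinitely branching node is the root, because for a nonempty $\tau = \sss_0\otimes\sss_1$ one recovers $\sss_0(0)$ and $\sss_1(0)$ from $\tau(0)$ by decoding the pair, and then each entry $\sss_j(i)$ is bounded by $H(\sss_j(0), i)$ for a function $H$ witnessing the c.l.c.\ property of $T$. The identification $[T\otimes T] \cong [T]\times [T]$ is computable both ways, so $\Op$ induces a computable function $\wt{\Op}\colon [T\otimes T]\to [T]$, and under this identification $\wt{\Op}(\+ K_u\times \+ K_v) = \+ K_u\+ K_v$. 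It remains to turn code numbers $u,v$ for subsets of $[T]$ into a code number for $\+ K_u\times \+ K_v$ viewed inside $[T\otimes T]$: for each $\aaa$ in the set coded by $u$ and each $\bbb$ in the set coded by $v$, pad the shorter of $\aaa,\bbb$ to the common length $\ell = \max(|\aaa|,|\bbb|)$ by enumerating its finitely many extensions in $T$ of length $\ell$ (possible since $T$ is c.l.c.\ and $\aaa,\bbb \neq \ES$), form the strings $\aaa'\otimes\bbb'$ over all resulting pairs, and take the union of the corresponding cones in $T\otimes T$; ranging over all such $\aaa,\bbb$ yields the desired code number. Feeding this code number, together with $w$, into \cref{lem:lahm2} applied to $\wt{\Op}$ decides whether $\+ K_u\+ K_v \sub \+ K_w$.

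The conceptual content is modest; the main obstacle is bookkeeping. The point requiring care is that $T\otimes T$ really is c.l.c.\ — this is precisely why code numbers are built from strings in $T-\{\ES\}$, so that the padding step produces only finitely many strings — and that the translation between code numbers for $[T]$ and for $[T\otimes T]$ is effective. The remaining ingredients (that $\Inv(\+ K_u)$ and $\+ K_u\+ K_v$ are again compact open, and that $\wt{\Op}$ is computable from the presentation) are routine.
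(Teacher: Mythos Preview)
Your proposal is correct and follows essentially the same approach as the paper: for (i) you search $E_T$ using \cref{lem:lahm2} applied to $\Inv$ to test both inclusions, and for (ii) you form the product tree $T\otimes T$, verify it is c.l.c., view $\Op$ as a computable map $[T\otimes T]\to [T]$, and invoke \cref{lem:lahm2}. You supply more bookkeeping detail than the paper does (the padding step to produce a code number for $\+ K_u\times\+ K_v$, and the explicit c.l.c.\ check), but the argument is the same.
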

\begin{proof} (i) By Lemma~\ref{lem:lahm2} one can decide whether $\+ K_u\sub (\+ K_w)^{-1}$. The equality  $\+ K_u= (\+ K_w)^{-1}$  is equivalent to $\+ K_u\sub  (\+ K_w)^{-1}\lland \+ K_w\sub  (\+ K_u)^{-1}$. So one lets $I(u)$ be the least index $v$ such that this equality holds.

\n (ii) Let $\wt T$ be the tree of  initial segments of strings of the form $\sss_0 \otimes \sss_1$, where $\sss_0, \sss_1 \in T$ have  the same length. Then $\wt T$ is a c.l.c.\  tree, $[\wt T]$  is  naturally homeomorphic to $[T] \times [T]$, and $\Op$ can be seen as a computable function $[\wt T ] \to [T]$. Now one applies Lemma~\ref{lem:lahm2}. \end{proof}


  
  We will also need a computable  presentation of the topological group of permutations of $\NN$ based on a subtree of $\NN^*$. Define a computable tree without leaves by 
  
  \medskip

    $ \Tree {\S} = \{ \sss \otimes \tau \colon \sss , \tau \in \NN^*\lland$ 
    \bc  $ \sss, \tau \,  \text{are 1-1} \lland   \sss (\tau(k)) = k \lland \tau(\sss(i))= i \text{ whenever defined}\}$. \ec 
    
    %
     A  string $ \sss \otimes \tau\in \Tree \S $ gives rise to a finite injection $\aaa_{ \sss \otimes \tau}$ on $\NN$, defined by   
\begin{equation} \label{eqn:inj} \aaa_{ \sss \otimes \tau}(r)=s  \text { iff } \sss(r)= s \,   \lor\,  \tau(s)=r.\end{equation}
   The paths of $\text{Tree}({\S})$ can be viewed as the  permutations of $\NN$, paired with their inverses: 
\bc   $[ \Tree {\S} ] =\{ f \otimes f^{-1} \colon \, f \text{ is permutation of } \NN\}$.   \ec
    The group operations on   $ \Tree {\S}$ are computable:   we have

    \begin{eqnarray*} (f_0 \otimes f_1) ^{-1}& = & f_1\otimes f_0 \\
(f_0 \otimes f_1) \cdot ( g_0 \otimes g_1) &  = & (f_0 \circ g_0) \otimes (g_1 \circ f_1).\end{eqnarray*} 
      

For a closed subgroup $\wt G$ of $\S$, we write \bc  $\Tree { \wt G}= \{ \sss \in \Tree {\S}  \colon \, [ \sss]_{\Tree {\S} } \cap \wt G \neq \ES\}$. \ec Note that this is a   subtree  of $\Tree {\S}$ without leaves. We say that  $\wt G$ is computable if $\Tree {\wt G}$ is computable.

    \begin{thm}  \label{thm:main2} \ \\ A group   $G$  is computably t.d.l.c.\ via a  Baire~presentation (Def.\ \ref{def:main1})
$\LR$   

\hfill    $G$ is computably t.d.l.c.\   via a  meet groupoid (Def.\ \ref{Def2}). 

From a presentation of $G$ of one type, one  can uniformly obtain a presentation of $G$ of  the other type.
\end{thm}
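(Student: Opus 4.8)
The plan is to establish the two directions separately, each time producing the target presentation uniformly from the source.

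\medskip

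\textbf{From Baire to meet groupoid.} Suppose $G = ([T],\Op,\Inv)$ is a computable Baire presentation. The idea is that the compact open cosets of $G$ are exactly certain compact open subsets of $[T]$, hence are named by the code numbers of \cref{defn: str index compact}, and all the meet-groupoid structure is decidable on these names. First I would use \cref{lem:lahm2} (with $\Phi = \Op$ seen as a map $[\wt T] \to [T]$ as in \cref{lem: decide inclusion}(ii), and with $\Phi = \Inv$) to show: given code numbers $u,v$ one can decide whether $\+ K_u$ is a subgroup (i.e.\ whether it contains the identity path, is closed under $\Inv$, and $\+ K_u \cdot \+ K_u \subseteq \+ K_u$), whether $\+ K_u$ is a coset of a given subgroup, and whether $\+ K_u \cdot \+ K_v \subseteq \+ K_w$; combined with \cref{lem: comp index tree} this makes the partial product, the inverse (via the function $I$ of \cref{lem: decide inclusion}(i)), the intersection, and the inclusion relation all computable. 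The domain of $\+ W$ is taken to be $E_T$, the decidable set of \emph{minimal} code numbers of those $\+ K_u$ that are compact open cosets — decidable because "compact open set", "subgroup", and "left/right coset of a subgroup present among the $\+ K_v$" are all decidable predicates and minimality is decidable by \cref{lem: comp index tree}(ii). Finally, for condition (c), the index $|U:U\cap V|$ is computed by finitely enumerating the code numbers of the cosets of $U\cap V$ contained in $U$ and counting them; this terminates because $U\cap V$ is open in the compact group $U$. Everything here is uniform in the Baire presentation.

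\medskip

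\textbf{From meet groupoid to Baire.} Suppose $\+ W$ is a Haar computable copy of $\+ W(G)$, with domain $D \subseteq \NN$. By van Dantzig, $G$ has a countable descending-in-inclusion family of compact open subgroups forming a neighbourhood basis of $1$; pick, computably from $\+ W$, a sequence of idempotents $V_0 \supseteq V_1 \supseteq \cdots$ with $\bigcap_n V_n = \{1\}$ (possible since $\+ W$ is countable and inclusion is decidable — take $V_n$ to be the intersection of the first $n$ idempotents in some enumeration, after ensuring it shrinks, which it must since distinct group elements are separated by cosets). The plan is to realize $G$ as a closed subgroup of $\Tree{\S}$ via the left action of $G$ on the countable set $X$ of left cosets of the $V_n$'s: enumerate $X = \{A_0, A_1, \dots\}$ computably (these are exactly the elements $A\in\+ W$ with $A = A\cdot V_n$ for some $n$), so that $g \mapsto (A_i \mapsto g\cdot A_i)$ embeds $G$ homeomorphically into $\S$ — faithful because $\bigcap V_n = \{1\}$, a homeomorphism onto its image because the $V_n$'s form a neighbourhood basis and each is represented in $X$. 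Then $\Tree{\wt G}$, where $\wt G$ is the image, is the tree of those $\sss\otimes\tau \in \Tree{\S}$ that extend to such a coset-action: a string is admissible iff the partial injection $\aaa_{\sss\otimes\tau}$ of \eqref{eqn:inj} is consistent with "there exists $g\in G$ with $g\cdot A_i = A_{\aaa(i)}$ for all decided $i$", and this is decidable using the meet-groupoid operations — the finitely many constraints $g\cdot A_i = A_{j_i}$ are simultaneously satisfiable iff a certain finite intersection of cosets in $\+ W$ (computed by the decidable $\cdot$, $^{-1}$, $\cap$) is nonempty. Thus $\Tree{\wt G}$ is computable; it is c.l.c.\ after the standard adjustment (Remark on branching at the root); and the group operations on $[\Tree{\wt G}]$ are the restrictions of the computable operations on $\Tree{\S}$. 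This gives a computable Baire presentation of $G$, uniformly in $\+ W$ (condition (c) is used to bound the branching and to guarantee the $V_n$-coset sets are finite, so that $X$ and the admissibility test behave computably).

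\medskip

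\textbf{Main obstacle.} The routine parts are the decidability bookkeeping in each direction; the delicate point is the meet-groupoid-to-Baire direction, specifically showing that the admissibility of a string is decidable — i.e.\ that satisfiability of a finite conjunction of coset-translation equations $g\cdot A_i = A_{j_i}$ can be decided purely from the abstract meet groupoid, without a concrete group in hand. This reduces to: the system is satisfiable iff $\bigcap_i A_{j_i}\cdot A_i^{-1}$ (a finite intersection whose emptiness is decidable by (b) of \cref{def:comp_meet_groupoid}) is a nonempty coset of the appropriate subgroup and is compatible with the domain constraints; verifying that this abstract criterion genuinely captures satisfiability in $G$ is where the axioms (d)--(g) of a meet groupoid, together with Haar computability, must be used carefully. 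I expect this to be the heart of the proof, and it is exactly the kind of argument the excerpt flags as drawing on "extra insights from the proofs that the various forms of computable presentation are equivalent."
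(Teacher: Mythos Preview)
Your proposal is essentially correct and close in spirit to the paper's proof, but there are two points worth noting.

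\smallskip

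\textbf{A small gap in the forward direction.} You assert that the set of minimal code numbers $u$ such that $\+ K_u$ is a compact open coset is \emph{decidable}. Your justification is that ``left/right coset of a subgroup present among the $\+ K_v$'' is decidable, but this predicate involves an existential quantifier over $v$, so a priori it is only recursively enumerable. The paper takes exactly this view: it observes that the set of such $u$ is r.e., enumerates it via a computable injection $\theta$, and sets $A_n = \+ K_{\theta(n)}$ with domain all of $\NN$. This is a routine fix, but you should not claim decidability without further argument (and none is apparent from the tools at hand: \cref{lem:lahm2} lets you decide $\+ K_u \+ K_v \subseteq \+ K_w$ but not the reverse inclusion, so you cannot directly compute a code for $\+ K_u \+ K_u^{-1}$).

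\smallskip

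\textbf{A genuine difference in the backward direction.} You act on the set $X$ of left cosets of a chosen descending chain $(V_n)$, and define $\wt G$ as the \emph{image} of $G$ under this action. The paper instead acts on \emph{all} of $\+ W$ and defines $\wt G = \+ G_{\text{comp}}(\+ W)$ \emph{intrinsically} as the set of permutations $p$ of $\+ W$ that preserve $\cap$ and satisfy $p(A\cdot B) = p(A)\cdot B$; it then proves (Claim~\ref{cl:isomoG}) that this intrinsic group coincides with the image of $G$. Both routes lead to the same admissibility criterion you correctly identify---a finite injection $\alpha$ extends iff each $\alpha(A)\cdot A^{-1}$ is defined and their intersection is nonempty---and both use the Haar computability condition~(c) via an ``effectively finite suborbits'' argument (\cref{lem: sst} in the paper) to get the c.l.c.\ bound. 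What the paper's intrinsic construction buys is a canonical operator $\+ G_{\text{comp}}$ inverse to $\+ W_{\text{comp}}$, which is exactly what is needed later for the autostability criterion (\cref{thm:compCrit}); your construction, depending on the choice of $(V_n)$, would not yield this duality as cleanly. For the bare statement of \cref{thm:main2}, however, your approach is perfectly adequate and arguably more direct.
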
 
 \begin{proof} 
 \lapf (This is the harder implication - if you don't want to read it skip to  Page~\pageref{haha}.) 
 
 We begin by defining an operator that, for Haar computable meet groupoids, is dual to the operation of sending $G$ to a   computable copy of    $\+ W(G)$  obtained above. 
 \begin{definition} \label{def:Gof} Given a   meet groupoid $\+ W$ with domain $\NN$, let $\wt G= \+ G_\text{comp} (\+ W)$ be the closed subgroup of $ \S$ consisting of elements $p$   that  preserve the meet operation of $\+ W$,  and satisfy  $p(A) \cdot B = p(A\cdot B) $ whenever $A \cdot B$ is defined.  \end{definition}

 Recall    that the elements of $\S$ are not actually permutations, but paths on $ \Tree {\S}$ encoding   pairs consisting of a permutation and its inverse. However, if $p \in \Tree {\wt G}$, and $A \in \+ W$ is denoted by $i$, we will  suggestively  write $p(A)$ for the element of $\+ W$ denoted by  the first component of the pair of natural numbers encoded by $p(i)$.  We note that for  each  subgroup $U\in \+ W(G)$, the set $B=p(U)$ satisfies $B\cdot U = p(U)\cdot U= p(U\cdot U)= B$,  and hence is a left coset of~$U$. 
The following diagram displays the condition in the definition above in category terms as a commutative diagram.
 \[\renewcommand{\labelstyle}{\textstyle}  \xymatrixcolsep{9pc}\xymatrix { U\ar[r]|-{A} & V \ar[r]|-{B}&  W \\ 
 U' \ar[ur]|-{p(A)}   \ar[urr]|-{p(A\cdot B) } &   &  }\]

Now suppose that $\+ W$ is as in Definition~\ref{Def2}. Recall the    convention  that all t.d.l.c.\ groups are infinite. So the  domain of $\+ W$ equals  $\NN$,  and there is an isomorphism of meet groupoids $  \+ W \to \+W(G)$, which below we will use to identify $\+ W$ and $\+ W(G)$.    Define a group homomorphism  $\Phi \colon G \to \wt G$ by letting   $\Phi(g)$ be   the element of $\S$ corresponding to the  left action of~$g$, i.e. $A \mapsto gA$ where $A \in \+W(G)$.    Note that $\Phi$ is injective because the compact open subgroups form a neighbourhood basis of $1$:
if $g \neq 1$ then $g \not \in U$ for some compact open subgroup $U$, so that  $\Phi(g)(U) \neq U$.


\begin{claim} \label{cl:isomoG}  $\Phi \colon  G \cong \wt G$. \end{claim} \n  To show  that $\Phi$ is onto, let $p \in \wt G$. Since   \bc $\{p(U)\colon U \in \+ W(G)  \text{ is a subgroup}\}$  \ec   is a filter on $\+ W(G)$ containing a compact set, there is an element $g$ in its intersection.
  Then $\Phi(g)= p$: recall that for  each  subgroup $U\in \+ W(G)$, the set $B=p(U)$  is a left coset of $U$, and hence equals $gU$. So, if $A$ is a right coset of $U$, then $p(A)= p(U\cdot A)=  B\cdot A = gA$.
 
  To show that  $\Phi$ is continuous at $1$ (and hence continuous), note that a basis of neighbourhoods of the identity in $\wt G$ is given by the open sets
   \bc $\{ p\in \wt G \colon \forall i \le n \, [p(A_i) = A_i]\}$, \ec where $A_1, \ldots, A_n \in \+ W(G)$. Given such a set, suppose $A_i$ is a right coset of $U_i$, and let $U = \bigcap U_i$. If $g \in U$ then $gA_i = A_i$ for each $i$.

The open mapping theorem for Hausdorff  groups says   that every surjective continuous homomorphism from a $\sss$-compact   group (such as  a t.d.l.c.\  group with a countable basis of the topology)   onto a Baire group is   open.  So $\Phi$ is open. This verifies the claim.

Using the assumption  that $\+ W$ is Haar computable, we now    show  that 
$\Tree {\wt G} $ is c.l.c.\  as in Definition \ref{def:comploccompact}. 
The following claim will  be used to show that   $\Tree {\wt G}$ is computable. 
\begin{claim}  A  finite injection $\aaa $ on $\NN$ can be  extended to some $p \in \wt G$

\n   \hspace{3cm}   $\LR$   $B\cdot A^{-1}$ is defined whenever $\aaa(A)= B$, and  \bc $\bigcap  \{ B \cdot A^{-1} \colon \, \aaa(A) = B\} \neq \ES$. \ec \end{claim}
For right to left,   let $g$ be an element of the intersection. Then $gA = B\cdot A^{-1} \cdot  A = B=\aaa(A)$ for each $A \in \dom (\aaa)$.

For left to right,   suppose $p\in \wt G $ extends $\aaa$.  By \cref{cl:isomoG}, there is $g\in G$ such that  $p = \Phi(g)$.  Then $gA = p(A) = B$ for each   $A,B$ such that $\aaa(A)= B$. Such  $A,B$ are   right cosets of the same subgroup.  Hence $B\cdot A^{-1}$ is defined, and clearly  $g$ is in the intersection. 
This establishes the   claim.
 
 \
 
   By (\ref{eqn:inj}),      \[ S = \{\sss \otimes \tau \colon \,  
 \aaa_{ \sss \otimes \tau} \text{ can be extended  to some } p \in \wt G\}\]
 is a computable subtree of $\Tree \S$ without leaves, and $\wt G= [S]$.  Hence $S=\Tree {\wt G}$.

%
Claim~\ref{cl:fcuk} below will  verify  that  $S= \Tree {\wt G}$ is a    c.l.c.\  tree  as defined in \ref{def:comploccompact}. The following lemma does the main work.  Informally it says that given  some subgroup $U \in \+ W$,  if one  declares that $p\in \wt G$ has  a   value $L\in \+ W$  at $U$, then one  can compute for any $F \in \+ W$ the  finite set of possible values of $p$ at $F$. 
\begin{lemma}[Effectively finite suborbits] \label{lem: sst} Suppose that   $U \in \+ W$ is a subgroup  and $L$ is a left coset of $U$. Let $F \in \+ W$. One can uniformly in    $U, L$ and $F$ compute a strong index for the  finite set  $\+ L=\{ p(F) \colon \, p \in  [S] \lland p(U) = L\}$.  
\end{lemma}
To see this, first one computes $V= F^{-1}\cdot F$,  so that $F$ is a right coset of the subgroup $V$. Next  one computes   $k=|U \colon U \cap V|$, the number of left cosets of $U \cap V$ in $U$.  Note that \bc  $\+ L_0= \{ p(U \cap V) \colon p \in  [S] \lland p(U) = L \}$ \ec  is the set of left cosets of $U \cap V$ contained in $L$.  Clearly this set  has size $k$. By searching $\+ W$ until all of its elements have appeared, one can compute a strong index for this  set.
 Next one computes a strong index  for   the set $\+ L_1$ of left cosets $D$ of $V$ such that $C \subseteq D$ for some $C \in \+ L_0$ (this uses that given  $C$ one can compute $D$). 
  Finally one outputs a strong {index}  for the set $\{ D \cdot F \colon \, D \in \+ L_1\}$, which equals $\+ L$.  This shows the lemma. 
 


We   make  the assumption that $0$ denotes  a subgroup $U$  in $\+ W$. This does not      affect  the uniformity statement of the theorem: otherwise we can search   $\+ W$ for the least $n$ such that $n$ is a subgroup, and then work with a new copy of $\+ W(G)$ where~$0$ and $n$ are swapped.  

   \begin{claim} \label{cl:fcuk} There is a computable binary   function $H$ such that,  if         $\rho \in S$, then  $\rho(i) \le H(\rho(0), i)$ for each $i < |\rho|$.    \end{claim}

\n    Let    $F$ be the coset denoted by $k$. Let $\rho(0)= \la a_0,a_1\ra$ and let  $L_r$ be the coset denoted by $a_r$, $r= 0,1$.
  Applying  Lemma~\ref{lem: sst} to $U, L_r, F$, one can compute     $H(\sss, i) $ as the greatest pair $\la b_0, b_1\ra$ such that   $b_r$ denotes  an element of $\{ p(F) \colon \, p \in  [S] \lland p(U) = L_r \}$ for $r=0,1$.    
  
  \medskip 

 
\label{haha}

  \rapf We build a Haar computable copy $\+ W$ of the meet groupoid $\+ W(G)$ as in Definition~\ref{Def2}. By  Lemma~\ref{lem: decide inclusion}, one can decide whether $u \in E_T$ is the code number of a subgroup (\cref{defn: str index compact}).   Furthermore,  one can decide whether $B= \+ K_v $ is a left  coset of  a subgroup $U= \+ K_u$: this holds iff   $BU \sub B$ and $BB^{-1} \sub U$, and the  latter two conditions are decidable by Lemma~\ref{lem: decide inclusion}. Similarly, one can decide whether $B$ is a right coset of $U$.
 
It follows that the set $\{ u \in E_T \colon \, \+ K_u \text{ is a  coset}\}$ can be  obtained via   an  existential quantification over a computable binary relation (in other words, $V$ is recursively enumerable). Hence,  by  a basic fact of computability theory,    there is   computable 1-1 function $\theta$ defined on an initial segment of $  \NN- \{0\}$ such that the range of $\theta$ equals this  set. Write $A_n = \+ K_{\theta(n)}$ for $n>0$, and $A_0 = \ES$.

The domain of   $\+ W$   is all of $\NN$.     By  \cref{lem: comp index tree}  the intersection operation on $\+ W$ is computable, i.e., there is a computable binary function $c$ on $\NN$ such that $A_{c(n,k)} =A_n \cap A_k$. Next, given $n,k \in \NN-\{0\}$ one can decide whether $A_n$  is a right coset of the same  subgroup   that  $A_k$  is a left coset of. In that case,  one can compute the number  $r$  such that  $A_r= A_n\cdot A_k$: one uses that  $A_r$ is the unique coset  $C$ such that \bi \item[(a)] $A_nA_k \sub C$,  and   \item[(b)]$C$ is a right coset of the same subgroup that  $A_k$ is a right coset~of. \ei
 For subgroups $U,V$, one can compute  $|U \colon U \cap V|$ by finding in  $\+ W$ further  and further  distinct left cosets of $U \cap V$ contained in $U$,  until their union reaches $U$. The latter condition is decidable. \end{proof}
 
 \begin{definition} \label{def:Wcomp} Given a computable Baire presentation $G$, by $\+ W_\text{comp} (G)$ we denote the computable copy of $\+ W(G)$ with domain $\NN$ obtained in the proof above. \end{definition}

\begin{cor}  \label{cor:action} In this context, the   left and right actions $[T] \times \NN \to \NN$, given by   $(g,A) \mapsto gA$ and  $(g,A) \mapsto Ag$,  are computable.  \end{cor}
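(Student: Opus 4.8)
The corollary asks that, given a computable Baire presentation $G=([T],\Op,\Inv)$, with $\+W_{\text{comp}}(G)$ the computable copy of $\+W(G)$ built in the proof of \cref{thm:main2}, the left action map $\mu_L\colon [T]\times\NN\to\NN$, $(g,A)\mapsto gA$, and the right action $\mu_R\colon (g,A)\mapsto Ag$ are computable in the sense of \cref{def:computable 1}. Recall that elements of $\+W_{\text{comp}}(G)$ are indices $n\in\NN$ with $A_n=\+K_{\theta(n)}$ a compact open coset (and $A_0=\ES$), where $\theta$ is the computable injective enumeration of minimal code numbers of cosets from that proof. So the task is: design an oracle Turing machine which, with $g\in[T]$ on its oracle tape and a number $n$ as input, outputs the index $m$ with $A_m=gA_n$.

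First I would reduce to the case $n\neq 0$, since $g\ES=\ES$ is handled by outputting $0$. For $n\neq 0$, write $A_n=\+K_{\theta(n)}$, so $\theta(n)$ is a strong index for a finite set $u\subseteq T-\{\ES\}$ of strings with $A_n=\bigcup_{\eta\in u}[\eta]_T$. The key point is that translating a cone $[\eta]_T$ by $g$ is computable: the map $h\mapsto gh$ is the function $\Op(g,\cdot)\colon[T]\to[T]$, which is computable with $g$ on an oracle tape, and by \cref{lem:lahm}(i) it has a computable modulus of continuity, so from a sufficiently long prefix of $g$ and the string $\eta$ one can compute a finite set of strings whose cones cover $g[\eta]_T$; refining using \cref{lem: comp index tree}(ii), and using that $g[\eta]_T$ is itself a compact open set (a coset-translate of a clopen piece of a coset), one computes an honest code number $v\in\NN$ with $\+K_v=g[\eta]_T$. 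Taking the union over $\eta\in u$ via \cref{lem: comp index tree}(i) gives a code number $w$ with $\+K_w=gA_n$. Finally, applying \cref{lem: comp index tree}(ii) one computes the minimal code number $w^*$, and then searches the computable enumeration $\theta$ for the (unique) $m$ with $\theta(m)=w^*$; output $m$. The right action is entirely symmetric, using $\Op(\cdot,g)$ in place of $\Op(g,\cdot)$, or alternatively $Ag=\big(g^{-1}A^{-1}\big)^{-1}$ via \cref{lem: decide inclusion}(i) and the left action.

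The one point requiring a little care — and the main obstacle — is making sure that $g[\eta]_T$ is actually recognized as a compact open set with a computable code number, uniformly in $g$ (as an oracle) and $\eta$. Here one uses that $g\cdot A_n = gA_n$ is a compact open coset in $G$ (translates of compact open cosets are compact open cosets), hence in particular a compact open subset of $[T]$, so it is of the form $\+K_v$ for some $v$; and computability of $\Op(g,\cdot)$ together with the modulus from \cref{lem:lahm}(i) lets the machine, after reading finitely much of $g$, enumerate a covering family of cones of $g[\eta]_T$ and then check (using \cref{lem:lahm2}, applied to $\Op(g,\cdot)$) that the reverse inclusion holds, i.e.\ that it has captured all of $g[\eta]_T$; this check terminates precisely because $g[\eta]_T$ is compact open. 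Once a correct $v$ is found the rest is bookkeeping with \cref{lem: comp index tree}. No new ideas beyond those already assembled in the proof of \cref{thm:main2} are needed; the corollary is essentially a repackaging of that machinery, which is why the proof is short.
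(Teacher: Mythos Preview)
Your approach works, but it is considerably more elaborate than the paper's one-line argument. You treat $gA$ as a general compact open subset of $[T]$, computing it cone by cone and then verifying both inclusions. The paper instead exploits the coset structure: having computed the subgroup $V=A^{-1}\cdot A$ (so that $A$ is a left coset of $V$), it observes that $gA$ is again a left coset of $V$, and simply searches through left cosets $B$ of $V$ in $\+W$ together with prefixes $\sigma\prec g$ until it finds one with $[\sigma]_T\, A\subseteq B$ (decidable by \cref{lem: decide inclusion}(ii)). Since $gA\subseteq[\sigma]_T\, A\subseteq B$ and distinct left cosets of $V$ are disjoint, any such $B$ must equal $gA$; no reverse-inclusion check and no decomposition of $A$ into cones is needed. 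For the right action the paper uses exactly your second alternative, $Ag=(g^{-1}A^{-1})^{-1}$. Your route would in fact compute the translate of an arbitrary compact open set, not just a coset, which is why it needs the extra verification step that the paper avoids.

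One small slip: you invoke \cref{lem:lahm2} ``applied to $\Op(g,\cdot)$'' to verify the \emph{reverse} inclusion $\+K_w\subseteq g[\eta]_T$, but that lemma only decides forward inclusions $\Phi(\+K_u)\subseteq\+K_v$. For the reverse direction you would instead apply it to $\Op(g^{-1},\cdot)$ (computable with oracle $g$ via $\Inv$) and check $g^{-1}\+K_w\subseteq[\eta]_T$. With this fix your argument goes through.
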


\begin{proof} For the left action, we use an   oracle Turing machine  that  has as an  oracle a path $g$ on $[T]$, and as an input an  $A \in \+ W$. If $A$ is  a left coset of a subgroup $V$, it outputs     the left  coset $B$  of~$V$ such that it can   find a string   $\sss \prec g$ with  $[\sss]_TA \sub B$.   

For the right action use that  $Ag = (g^{-1}A^{-1})^{-1}$ and inversion is computable both in $G$ and in $\+ W_\text{comp} (G)$.  \end{proof}

 Recall from the introduction  that  $\mathrm{Aut}(T_d)$ is the group of automorphism of the undirected tree   $T_d$  where each vertex has degree $d$.
 \begin{example}  \label{ex:Td}Let $d\ge 3$. The t.d.l.c.\  group $G=\text{Aut}(T_d)$ has a computable Baire presentation.   \end{example}

   \begin{proof} Via  an effective encoding  of the vertices of $T_d$ by the natural numbers, we can view $G$ itself as a closed subgroup of $\S$. A finite injection $\aaa$ on $T_d$ can be extended to an automorphism of $T_d$ iff   it preserves   distances, which is a decidable condition. Each $\eta \in\mathit{Tree}(\S)$ corresponds to   an injection on $T_d$ via (\ref{eqn:inj}). 
  So we can decide whether $[\eta]_{\Tree G }=[\eta]_{\Tree {\S}} \cap G \neq \ES$.  
 Clearly   $[\eta]_{\mathit{Tree}(G)}$ is compact for \emph{every}  such nonempty string $\eta$.

To see that $\mathit{Tree}(G)$ is c.l.c., note that   if $\sss\in \mathit{Tree}(G)$ maps $x\in T_d$ to $y\in T_d$, then every extension $\eta \in \mathit{Tree}(G) $ of $\sss$ maps   elements  in $T_d$ at distance $n$ from $x$ to elements  in $T_d$ at distance $n$ from $y$, and conversely.  
 This yields a computable bound $H(\sss, i)$ as required in (3) of Def.~\ref{def:comploccompact}. \end{proof}


\section{Algorithmic properties of objects associated with a   t.d.l.c.\ group}
\subsection{The modular function is computable} In Subsection~\ref{Assoc.Comp} we discussed  the modular function $\Delta \colon G \to \RR^+$.  As an   application of \cref{cor:action}, we show that  for any computable presentation, the modular function   is computable.   \begin{cor}  \label{cor:delta} Let $G$ be computably t.d.l.c.\ via  a Baire presentation $([T],  \Op, \Inv)$. Then the  modular function $\Delta \colon [T] \to \QQ^+$ is computable. \end{cor}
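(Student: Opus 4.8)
The plan is to read off $\Delta(g)$ from the Haar measure of a single right coset, using the two facts already established in the preceding sections: that the right action of $G$ on its meet groupoid is computable (\cref{cor:action}), and that the computable copy $\+ W_\text{comp}(G)$ of $\+ W(G)$ produced in \cref{def:Wcomp} is Haar computable. Recall the description of the modular function in the t.d.l.c.\ setting: fix a compact open subgroup $U\le G$ and let $\mu$ be a left Haar measure on $G$; then for every $g\in G$ one has $\Delta(g)=\mu(Ug)/\mu(U)$. (Some authors take the reciprocal as their convention; this is harmless here, since the two expressions are mutually inverse, $\Delta$ is a group homomorphism, and $\Inv$ is computable on $[T]$, so from a procedure computing one of them we obtain a procedure computing the other.) Note that $Ug$ is a compact open coset, hence an element of $\+ W(G)$, and that the ratio $\mu(Ug)/\mu(U)$ does not depend on the scaling of $\mu$.

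The procedure is then as follows. Work inside $\+ W:=\+ W_\text{comp}(G)$, whose domain is $\NN$. First fix, effectively, a natural number denoting a compact open subgroup $U$ of $G$: since being a subgroup (i.e.\ an idempotent) is decidable on the domain of a Haar computable meet groupoid, one finds such a number by search — alternatively one may arrange, as in the proof of \cref{thm:main2}, that $0$ denotes a subgroup. Normalise $\mu$ so that $\mu(U)=1$; then $\mu$ assigns a rational number to a compact open subgroup, so by \cref{rem:Haar computable} it is rational-valued and the function $A\mapsto\mu(A)$ from $\+ W$ to $\QQ$ is computable. By \cref{cor:action} the right action $[T]\times\NN\to\NN$, $(g,A)\mapsto Ag$, is computable, so there is an oracle Turing machine which, given a name for $g\in[T]$ on its oracle tape, outputs (the element of $\+ W$ denoting) $Ug$. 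Composing this machine with the computable function $A\mapsto\mu(A)$ yields an oracle Turing machine computing the rational $\mu(Ug)=\Delta(g)$ from $g$. Hence $\Delta\colon[T]\to\QQ^+$ is computable.

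I do not expect a genuine obstacle at this point: essentially all of the difficulty was already discharged in proving \cref{cor:action} and in establishing, via \cref{thm:main2}, that $\+ W_\text{comp}(G)$ is Haar computable. The only points needing a little care are (i) stating the modular function correctly, $\Delta(g)=\mu(Ug)/\mu(U)$ — if one prefers a conjugation formula one can equally use $\Delta(g)^{-1}=\mu(gUg^{-1})/\mu(U)$, computed the same way by combining the left and right actions with the computability of $\Inv$ — and (ii) observing that normalising $\mu$ on a compact open subgroup forces $\mu$ to be rational-valued (as in the proof of \cref{rem:Haar computable}), so that $\Delta$ really takes values in $\QQ^+$ and not merely among the computable reals.
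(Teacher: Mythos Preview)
Your proof is correct and follows essentially the same approach as the paper: fix a compact open subgroup, use \cref{cor:action} to compute a coset, and then read off $\Delta(g)$ as a ratio of Haar measures via \cref{rem:Haar computable}. The only cosmetic difference is that the paper uses the \emph{left} action to compute $A=gV$, then recovers the subgroup $U$ with $A=Ug$ via $U=A\cdot A^{-1}$, and outputs $\mu(V)/\mu(U)$; you instead apply the right action directly to obtain $Ug$ and output $\mu(Ug)/\mu(U)$, which is the same number.
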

\begin{proof}   Let $V \in \+ W$ be any subgroup.
Given $g \in [T]$,  using \cref{cor:action} compute $A=g V$. Compute $U \in \+ W$ such that  $A $ is a right coset of $U$, and hence $A = Ug$. For any left Haar measure $\mu$ on $G$, we have \bc $\Delta(g)= \mu(A)/\mu(U)=  \mu(V)/\mu(U)$.  \ec  By \cref{rem:Haar computable} we can choose $\mu$ computable; so this suffices to determine $\Delta(g)$.  
 \end{proof}

\subsection{Cayley-Abels graphs are computable}
   Let $G$ be a t.d.l.c.\ group that is   compactly generated, i.e., algebraically generated by a compact subset. Then there is a compact open subgroup $U$, and a     set $S = \{ s_1, \ldots, s_k\} \sub G$ such that $S = S^{-1}$ and $U \cup S$ algebraically generates $G$. The \emph{Cayley-Abels graph} 
   \bc $  \Gamma_{S,U}= (V_{S,U}, E_{S,U})$ \ec of $G$ is given as follows. The  vertex set $V_{S,U}$ is the set $L(U) $ of left cosets of $U$, and the edge relation is   
\bc $E_{S,U} = \{ \la gU, gsU \ra \colon \, g \in G, s \in S\}$. \ec
Some background and      original references are given  in Section 5 of \cite{Willis:17}. For more detailed background see Part~4 of \cite{Wesolek:18}, or \cite[Section~2]{Kron.Moller:08}.  If $G$ is discrete (and hence finitely generated), then $\Gamma_{S,\{1\}}$ is the usual Cayley graph for the generating set~$S$.  Any two  Cayley-Abels graphs of $G$  is  are quasi-isometric.  See \cite[Def.~3]{Kron.Moller:08} or \cite{Wesolek:18} for the formal definition.


\begin{thm} \label{prop:CA graph} Suppose that $G$ is computably t.d.l.c.\ and compactly generated.   \bi \item[(i)] Each Cayley-Abels graph $\Gamma_{S,U}$ of $G$ has a computable copy $\+ L$. 

\item[(ii)] If $\Gamma_{T,V}$ is another Cayley-Abels graph obtained as above, then $\Gamma_{S,U}$ and  $\Gamma_{T,V}$ are computably quasi-isometric. 

 \ei \end{thm}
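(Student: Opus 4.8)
The plan is to invoke the equivalence of the two notions of presentation (\cref{thm:main2}) and to argue, whichever is convenient, either inside a computable Baire presentation $([T],\Op,\Inv)$ of $G$ or inside the associated Haar computable meet groupoid $\+ W=\+ W_{\text{comp}}(G)$ of \cref{def:Wcomp}. A Cayley--Abels graph $\Gamma_{S,U}$ is presented by the following data: the compact open subgroup $U$, viewed as an idempotent of $\+ W$ (equivalently, a code number), together with the finitely many compact open cosets $s_1U,\dots,s_kU$, viewed as elements of $\+ W$ (equivalently, code numbers). Note that $\langle U\cup S\rangle=G$ is not needed for (i); it is used in (ii) only to bound word lengths.

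For (i), first I would observe that the vertex set $V_{S,U}=L(U)$, the set of left cosets of $U$, is a decidable subset of $\+ W$ (one tests ``$A\cdot U=A$'' using \cref{lem: comp index tree} and \cref{lem: decide inclusion}); it is infinite since $[G:U]=\infty$, so a computable re-indexing turns it into a domain $\NN$. The substance is that the edge relation is decidable. The key point is the identity, valid for $C=gU$ and $D=hU$,
\[(C,D)\in E_{S,U}\iff g^{-1}h\in USU\iff s_iU\subseteq C^{-1}D\text{ for some }i\le k,\]
where $C^{-1}D=Ug^{-1}hU$ is the double coset; both equivalences are routine double-coset manipulations and are independent of the representatives $g,h$. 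Now $C^{-1}$ has a computable code number by \cref{lem: decide inclusion}(i); a code number for the product $C^{-1}D$ of two compact open sets is computable from computability of $\Op$ by the same method that proves \cref{lem: decide inclusion}(ii) (cf.\ \cite[Section~6]{Melnikov.Nies:22}); and inclusion between compact open sets is decidable by \cref{lem: comp index tree}(ii). Hence the edge relation is decidable, producing the required computable copy $\+ L$.

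For (ii) I interpret ``computably quasi-isometric'' as: there is a quasi-isometry between the two computable copies that is computable as a map and has explicitly computable additive and multiplicative constants (it then has a computable coarse inverse). Since a composition of such maps is again of this kind, it is enough to link $\Gamma_{S,U}$ to $\Gamma_{T,V}$ by a chain of two elementary moves. \emph{Move (a), changing the generating set for a fixed $U$:} $\Gamma_{S,U}$ and $\Gamma_{T,U}$ have the \emph{same} vertex set $L(U)$, and the identity map is a quasi-isometry, because finitely many $\Gamma_{T,U}$-steps suffice to move between $\Gamma_{S,U}$-adjacent vertices (and symmetrically); an explicit bound is computable, since one can search for the least $n$ with each $s_iU$ lying in the (computable, compact open) ``$n$-ball'' of $U$ in $\Gamma_{T,U}$, a decidable and eventually true condition. \emph{Move (b), shrinking $U$ to a compact open $V\subseteq U$:} here $[U:V]$ is finite and computable (\cref{def:comp_meet_groupoid}(c)), so one computes a finite transversal $R$ of $V$ in $U$ and sets $T_0=R\cup R^{-1}\cup S$; then $\langle V\cup T_0\rangle=G$, and the projection $\pi\colon L(V)\to L(U)$, $gV\mapsto gU$, is computable, surjective and $[U:V]$-to-one. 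It is a quasi-isometry from $\Gamma_{T_0,V}$ onto $\Gamma_{S,U}$: the estimate $d_{S,U}(\pi p,\pi q)\le d_{T_0,V}(p,q)$ is immediate from $V\subseteq U$, $R\subseteq U$ and $S\subseteq T_0$, while computable $n_0,n_1$ with $U$ and $SU$ contained in computable $\Gamma_{T_0,V}$-balls give $d_{T_0,V}(p,q)\le n_0+n_1\,d_{S,U}(\pi p,\pi q)$; a computable coarse inverse comes from any computable section of $\pi$.

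To finish, for arbitrary $\Gamma_{S,U}$ and $\Gamma_{T,V}$ put $W=U\cap V$, which is computable as the meet in $\+ W$. Move (b) (applied to $W\subseteq U$ and to $W\subseteq V$) gives generating sets $T_2,T_3$ for $W$ with $\Gamma_{T_2,W}$ computably quasi-isometric to $\Gamma_{S,U}$ and $\Gamma_{T_3,W}$ computably quasi-isometric to $\Gamma_{T,V}$; move (a) gives $\Gamma_{T_2,W}$ computably quasi-isometric to $\Gamma_{T_3,W}$; composing the three yields the theorem. I expect the main obstacle to be bookkeeping rather than a single hard step: one must check that \emph{every} appeal to compactness (``some power of $TU$ swallows $SU$'', ``$U$ lies in a finite ball'') can be replaced by an effective search, which is legitimate precisely because inclusion of computable compact open sets is decidable and iterated products $(TU)^n$ have computable code numbers; the analogous point in (i) is that a product of two compact open sets has a computable code number, and in (ii) that the constants of a composition of computable quasi-isometries remain computable.
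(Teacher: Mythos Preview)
Your proof is correct, but your route for (i) differs from the paper's. The paper stays entirely within the Haar computable meet groupoid $\+ W$: setting $C_i = s_i U$, $V_i = C_i \cdot C_i^{-1}$ and $V = U \cap \bigcap_i V_i$, it shows that $\langle A,B\rangle\in E_\Gamma$ iff there exist $i\le k$, $P\in L(V)$, $Q\in L(V_i)$ with $P\subseteq A$, $P\subseteq Q$ and $B=Q\cdot C_i$. The point is that $Q\cdot C_i$ is a \emph{groupoid} product, so decidability reduces to listing the finitely many $P\in L(V)$ inside $A$ (Haar computability, via the index function) and computing inside $\+ W$. You instead work in the Baire presentation with the elegant criterion $(C,D)\in E\Leftrightarrow s_iU\subseteq C^{-1}D$ (equivalently $C^{-1}D\subseteq USU$), which involves the \emph{set} product $C^{-1}D$ --- a double coset, generally not a groupoid product. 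This is fine, but the step ``compute a code number for $C^{-1}D$'' is not quite ``the same method that proves \cref{lem: decide inclusion}(ii)'': that lemma decides $\+ K_u\+ K_v\subseteq \+ K_w$, whereas you also need the reverse inclusion to pin down the correct $w$. The claim is true (the image of a compact open set under $\Op$ is effectively compact, and being open as well determines its code) and your citation of the full paper covers it, but it is a stronger tool than the meet-groupoid argument needs. What your approach buys is a one-line description of the edge relation; what the paper's buys is self-containment within the meet groupoid and a transparent role for Haar computability.

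For (ii) the two arguments are close: both factor through $U\cap V$, and the paper's computable section $\psi\colon L(U)\to L(V)$ with $\psi(A)\subseteq A$ is exactly a coarse inverse of your projection $\pi$. The paper delegates the quasi-isometry estimates to \cite{Kron.Moller:08}; you make the constants explicit via your two moves, which is fine. Note that once (i) is in hand, the ball computations in your moves (a) and (b) can be carried out by breadth-first search in the computable graphs, so (ii) does not independently require the set-product lemma.
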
 
\begin{proof} 
 (i) For the domain of the computable copy $\+ L$,   we take the computable set of left cosets of $U$.    We show that the edge relation is first-order definable from the parameters in such a way that it can be verified to be computable as well. 

Let $V_i = C_i \cdot   C_i^{-1}$ so that $C_i$ is a right coset of $V_i$. Let $V =  U \cap \bigcap_{1 \le i \le k} V_i$. To first-order  define $E_\Gamma$ in $\+ W$ with the given parameters, the idea is to replace the elements  $g$ in the definition of $E_\Gamma$ by left cosets $P$ of $V$, since they are sufficiently accurate approximations to $g$.
 It is easy to verify that $\la A, B \ra \in E_\Gamma$ $\LR$ 
 \bc $\ex i \le k \ex P \in L(V) \ex Q \in L(V_i) \, [ P \sub A \lland P \sub Q \lland B = Q \cdot C_i]$,  \ec
 where $L(U)$ denotes the set of left cosets of a subgroup $U$: For the implication ``$\LA$", let $g\in P$; then we have $A = gU$ and $B = gs_i U$.  For the implication ``$\RA$", given $A = gU$ and $B= gs_i U$, let $P\in L(V)$ such that $g \in P$. 
 
 We verify that the edge relation  $E_\Gamma$ is computable. Since $\+ W$ is Haar computable, by the usual enumeration argument we can obtain  a strong index for the set of  left cosets of $V$ contained in $A$. Given  $P$ in this set and $i \le k$, the left coset  $Q= Q_{P,i}$ of $V_i$ in the expression above is unique and   can be  determined effectively. So we can test whether $\la A, B \ra \in E_\Gamma$ by trying all $P$ and all $i\le k$ and checking whether $B = Q_{P,i} \cdot C_i$. 
%
 
 \smallskip
 
\n (ii) (Sketch) First suppose that $V \sub U$. There is a computable map $\psi \colon L(U) \to L(V)$ such that $\psi(A) \sub A$. The proof of \cite[Thm.\ 2$^+$]{Kron.Moller:08} shows that $\psi \colon \Gamma_{S,U} \to \Gamma_{T,V}$ is a quasi-isometry. In the general  case, let $R\sub G$ be a finite symmetric set such that $(U \cap V) \cup R$ algebraically generates $G$. There are  computable quasi-isometries $\phi \colon \Gamma_{S,U} \to \Gamma_{R, U \cap V}$ and  $\psi \colon \Gamma_{T,V} \to \Gamma_{R, U \cap V}$ as above. There is a computable quasi-isometry $\theta: \Gamma_{R, U \cap V} \to  \Gamma_{T,V}$: given a vertex $y\in L(U \cap V)$, let    $x= \theta(y)$ be a vertex in $L( V)$ such that  $\psi(x)$ is at distance at most $c$ from $y$, where $c$ is a constant for  $\psi$  as above.   Then $\theta \circ \phi$ is a quasi-isometry as required. 
%
\end{proof}

 \subsection{Algorithmic properties of the scale function} \label{s:scale}
The scale function $s \colon G \to \NN^+$ for  a t.d.l.c.\ group~$G$ was   introduced by  Willis~\cite{Willis:94}. Recall that  for a compact  open subgroup~$V$ of $G$ and an element $g\in G$ one defines $m(g,V) = |V^g\colon V \cap V^g| $,   and   
\bc $s(g)= \min \{ m(g,V)\colon \,  V \text{ is a compact open subgroup}\}$. \ec  
 Willis    proved  that the scale function is continuous,  where $\NN^+$ carries the discrete topology. He  introduced the relation that  a compact open subgroup $V$ is  \emph{tidy for~$g$}, and showed that this condition is equivalent to being   minimizing for $g$ in   the sense that $s(g)= m(g,V)$. M\"oller  \cite{Moeller:02} used graph theoretic methods to   show  that  $V$ is minimizing for $g$ if and only if  $ m(g^k, V) = m(g,V)^k$ for each $k\in \NN$. He also derived the ``spectral radius formula": for any compact open subgroup $U$, one has $s(g)= \lim_k m(g^k, U)^{1/k}$. 
 
 The following example is well-known (\cite[Example 2]{Willis:17}); we include it to show that our framework is adequate as a general background for case-based approaches to computability for  t.d.l.c.\ groups  used in earlier works.
 \begin{example}[with Stephan Tornier] For $d \ge 3$, the scale function on $\Aut(T_d)$ in the computable presentation of \cref{ex:Td} is computable. \end{example}
 \begin{proof}
 An automorphism $g$ of $T_d$ has exactly one of three types (see  \cite{Figa.Nebbia:91}):  
 \begin{enumerate}
\item $g$ fixes a vertex $v$: then $s(g)= 1$ because $g$ preserves the stabilizer of $v$, which is a    compact open subgroup. 

\item $g$ inverts an edge: then $s(g)= 1$ because $g$ preserves     the set-wise stabilizer of the set of  endpoints of this edge.

\item $g$ translates along a geodesic (a subset of $T_d$ that is a homogeneous  tree of degree $2$): then $s(g) = (d-1)^\ell $ where  $\ell$ is the  length. 
To see this, for $\ell=1$   one uses as a minimizing subgroup  the compact open subgroup of automorphisms that fix two given adjacent vertices on the axis. For $\ell>1 $ one uses that $s(r^k)= s(r)^k$ for each $k$ and $r \in \Aut(T_d)$; see again~\cite{Willis:94}. 
\end{enumerate}
The oracle Turing machine, with  a path corresponding to   $g \in \Aut(T_d)$ as an oracle, searches in parallel  for a witness to (1), a witness to (2), and a sufficiently long piece of the axis in (3) so that the shift becomes ``visible". It then outputs the corresponding value of the scale.
 \end{proof}

 For  the rest of this section, fix a  computable Baire presentation $([T], \Op, \Inv)$  of a t.d.l.c.\ group $G$ as in Def.\ \ref{def:main1}.   Let $\+ W = \+ W_{\text{comp}}(G)$ be the   Haar computable copy of $ \+ W(G)$ given by \cref{def:Wcomp}. Recall that  the domain of~$\+ W$ is $\NN$.  Via $\+ W$ we can  identify compact open cosets of $G$ with natural numbers. 
 The following is immediate from  \cref{cor:action}.
\begin{fact}  \label{fact:mg}The function $m \colon [T]  \times \+ \NN \to \NN$ (defined to be $0$ if the second argument  is not a subgroup) is computable. \end{fact}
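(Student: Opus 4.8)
The plan is to express $m$ as a composition of operations already shown to be computable: the inversion $\Inv$ on $[T]$, the left and right actions $[T]\times\NN\to\NN$ of \cref{cor:action}, the decidability of the groupoid and meet operations on the Haar computable copy $\+W=\+W_{\mathrm{comp}}(G)$, and the computable index function of condition (c) of \cref{def:comp_meet_groupoid}.

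On oracle $g\in[T]$ and input $n\in\NN$, the machine first decides whether $n$ denotes a subgroup of $\+W$; this is effective because the groupoid multiplication of $\+W$ is computable, so the relation $A\cdot A=A$ is decidable. If $n$ is not a subgroup it outputs $0$. Otherwise, writing $V$ for the subgroup denoted by $n$, it computes a code for the conjugate $V^g=g^{-1}Vg\in\+W$: since $\Inv$ is computable on $[T]$ it works with the oracle $g^{-1}$, then by \cref{cor:action} it computes a code for the left coset $g^{-1}V$, and finally, applying the right action $A\mapsto Ag$ of \cref{cor:action}, a code for $(g^{-1}V)g=g^{-1}Vg=V^g$. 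The element $V^g$ is an idempotent of $\+W$ (being the conjugate of a subgroup), which is in any case confirmed by the decidable subgroup test.

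It remains to compute $m(g,V)=|V^g:V^g\cap V|$. This is precisely the value returned by the partial index function of condition (c) of \cref{def:comp_meet_groupoid} on the pair of subgroups $(V^g,V)$; equivalently, the machine computes $W=V^g\cap V$ by the computable meet operation and then enumerates pairwise distinct left cosets of $W$ contained in $V^g$ until their union exhausts $V^g$ (a decidable condition by \cref{lem: comp index tree}), counting them. The machine outputs this number, which by definition is $m(g,V)$. No real obstacle arises, as every step is a direct invocation of an earlier computability result; the one point deserving a word is that the code for $V^g$ produced by chaining the two action maps lands on a subgroup, so that the index function of (c) is applicable — but this is automatic, since being a subgroup is a decidable property of domain elements of $\+W$.
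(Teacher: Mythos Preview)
Your proof is correct and is essentially a detailed expansion of the paper's own argument, which consists of the single sentence ``immediate from \cref{cor:action}''. You spell out exactly the steps the paper leaves implicit: decide whether the input codes a subgroup, use the computable actions of \cref{cor:action} (together with $\Inv$) to produce a code for $V^g$, and then invoke condition~(c) of \cref{def:comp_meet_groupoid} to read off $|V^g:V^g\cap V|$.
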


 
 It is of   interest to study whether   the scale function, seen as a function  $s\colon [T]\to \NN$,   is computable in the  sense of   \cref{def:computable 1}.  We note that neither M\"oller's  spectral radius formula, nor the tidying procedure of Willis (see again \cite{Willis:17}) allow  to compute the scale  in our sense.

The scale  is computable if and only if one can algorithmically decide whether a subgroup is mimimizing:
\begin{fact} The scale function  on $[T]$ is computable  $\LR$ the  following function $\Phi$ is computable in the sense of Def.\ \ref{def:computable 1}: if   $g \in [T] $ and $V$ is a compact open subgroup of $G$, then $\Phi(g,V)=1$ if $V$ is minimizing for~$g$;  otherwise $\Phi(g,V)=0$.      \end{fact}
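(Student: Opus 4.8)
The plan is to prove the biconditional by showing each direction reduces to a routine search argument, using the two key computability facts already established: \cref{fact:mg} (the function $m \colon [T] \times \NN \to \NN$ is computable) and M\"oller's characterization that $V$ is minimizing for $g$ iff $m(g^k,V) = m(g,V)^k$ for all $k$, together with the fact that the group operations on $[T]$ are computable, so $g \mapsto g^k$ is computable uniformly in $k$.

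For the implication $\Leftarrow$, suppose $\Phi$ is computable. To compute $s(g)$ from an oracle for $g \in [T]$, the oracle Turing machine enumerates the compact open subgroups $V_0, V_1, \ldots$ of $G$ (these are exactly the idempotents of $\+ W$, which form a computable subset of $\NN$ since being a subgroup is decidable), and for each $V_i$ runs the algorithm for $\Phi(g, V_i)$. As soon as it finds an $i$ with $\Phi(g,V_i)=1$, it computes $m(g,V_i)$ using \cref{fact:mg} and outputs this value as $s(g)$. This halts because some minimizing subgroup exists, and the answer is correct by Willis's theorem that minimizing subgroups realize the scale.

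For the implication $\Rightarrow$, suppose the scale function $s \colon [T] \to \NN$ is computable. Given $g \in [T]$ and a compact open subgroup $V$, the algorithm for $\Phi(g,V)$ first computes $s(g)$ using the oracle for $g$, then computes $m(g,V)$ using \cref{fact:mg}; it outputs $1$ if these agree and $0$ otherwise. Correctness is again immediate from Willis's characterization of minimizing subgroups as those with $s(g) = m(g,V)$. Note that all the data here is genuinely computable: the subgroup $V$ is presented as a natural number in the domain of $\+ W$, and deciding whether a given element of $\NN$ denotes a subgroup is decidable, so $\Phi$ is well-defined as a function on $[T] \times \NN$ (returning $0$ on non-subgroup second arguments, as in \cref{fact:mg}).

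There is no real obstacle here; the statement is essentially a reformulation packaging \cref{fact:mg} together with the classical theorem of Willis. The only point requiring a small remark is that in the $\Leftarrow$ direction the search over compact open subgroups terminates, which is guaranteed by the existence of a tidy (equivalently minimizing) subgroup for every $g$, a foundational result from \cite{Willis:94}. One should also observe that the enumeration of subgroups and the evaluations of $\Phi$ can be interleaved so the whole procedure is a single oracle computation in the sense of \cref{def:computable 1}.
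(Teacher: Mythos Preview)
Your proof is correct and matches the paper's argument essentially verbatim: search for a minimizing $V$ to compute $s(g)$ from $\Phi$, and compare $m(g,V)$ with $s(g)$ to compute $\Phi$ from the scale. The M\"oller characterization you mention in your plan is never actually invoked and can be dropped; the definition of ``minimizing'' as $s(g)=m(g,V)$ is all that is used.
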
  
\begin{proof} \rapf An  oracle Turing machine with oracle $g$ searches for the first~$V$ that is minimizing  for $g$, and outputs $m(g,V)$. 

\lapf For  oracle $g$, given input $V$ check whether $m(g,V) = s(g)$.  If so output $1$, otherwise $0$. \end{proof}

We next provide a fact restricting  the complexity of the scale function.  We     say that a function $\Psi: [T] \to \NN$ is \emph{computably approximable from above} if there is a computable function $\Theta: [T] \times \NN \to \NN$ such that $\Theta(f ,r) \ge \Theta(f ,r+1)$ for each $f\in[T],r\in \NN$, and   \bc $\Psi(f)= k $ iff  $\lim_r \Theta(f ,r) =k$.  \ec
 \begin{fact} The scale function   is   computably approximable from above. \end{fact}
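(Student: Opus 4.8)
The plan is to build $\Theta$ by searching through the compact open subgroups one at a time and keeping a running minimum of the values $m(g,V)$. Recall that $\+ W = \+ W_{\text{comp}}(G)$ is Haar computable, so by condition (b) of \cref{def:comp_meet_groupoid} the predicate ``$n$ denotes a subgroup'' is decidable on the domain $\NN$ of $\+ W$; fix a computable enumeration $(V_i)_{i\in\NN}$, with repetitions allowed, of the natural numbers denoting compact open subgroups in $\+ W$ (there is always at least one, and the set of them is decidable). Since $\+ W$ is isomorphic as a meet groupoid to $\+ W(G)$ and meet-groupoid isomorphisms carry nonzero idempotents to nonzero idempotents, the list $(V_i)$ runs through exactly the compact open subgroups of $G$, each identified with a natural number. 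Now define
\[
\Theta(g,r) \;=\; \min\{\, m(g,V_i) \colon i \le r \,\}.
\]

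By \cref{fact:mg} the function $m \colon [T]\times\NN \to \NN$ is computable; hence an oracle Turing machine with oracle $g \in [T]$ can, on input $r$, compute $m(g,V_i)$ for each $i\le r$ and output their minimum, so $\Theta$ is computable in the sense of \cref{def:computable 1}. Monotonicity $\Theta(g,r)\ge\Theta(g,r+1)$ is immediate from the definition. It remains to see that $\lim_r\Theta(g,r)=s(g)$. The sequence $(\Theta(g,r))_{r\in\NN}$ is non-increasing and bounded below by $1$, hence eventually constant, with value $\inf_i m(g,V_i)$. Since $(V_i)$ enumerates all compact open subgroups of $G$, this infimum equals $\inf\{\, m(g,V) \colon V \text{ a compact open subgroup of }G\,\}$, which is by definition $s(g)$; by Willis' theorem that every $g$ admits a subgroup tidy (equivalently minimizing) for it, the infimum is in fact attained, but in any case the limit of the non-increasing sequence is $s(g)$, as required.

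There is essentially no obstacle here: the whole argument reduces to \cref{fact:mg} (itself a consequence of \cref{cor:action}) together with the observation that the nonzero idempotents of the computable copy $\+ W$ are precisely the compact open subgroups of $G$. The only point needing a little care is the edge case where $\{V_i : i \le r\}$ is degenerate for small $r$, which is avoided automatically once we define $\Theta$ through the enumeration $(V_i)$ of subgroups (so that $V_0$ is always available), or, alternatively, by invoking the convention that $0$ denotes a subgroup in $\+ W$.
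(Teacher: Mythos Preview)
Your proof is correct and follows the same idea as the paper's: take a running minimum of $m(g,\cdot)$ over finitely many subgroups and invoke \cref{fact:mg}. The paper's proof is the one-liner ``Let $\Theta(f,r)$ be the minimum value of $m(f,s)$ over all $s \le r$''; your version is actually more careful, since you explicitly restrict to an enumeration of the subgroups rather than all $s\le r$ (recall $m(f,s)$ is defined to be $0$ when $s$ is not a subgroup, so a literal reading of the paper's formula would give $\Theta\equiv 0$ once a non-subgroup appears).
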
 
 \begin{proof} Let $\Theta(f,r)$ be the minimum value of $m(f,s)$ over all $s \le r$.  
 %
\end{proof}

\section{Closure properties of the class of computably t.d.l.c.\ groups} \label{s:closure}
All computable presentations in this section will be Baire presentations (see \cref{def:main1}), and we will usually   view a t.d.l.c.\  group $G$  concretely as  a    computable Baire presentation. Extending the previous notation in the setting of   closed subgroups of $\S$, by $\Tree G$ we denote the c.l.c.\  tree underlying this computable Baire presentation.
The following is immediate. 
\begin{fact}[Computable closed subgroups]\label{fact:immediate} Let $G$ be a computably   t.d.l.c.\ group. Let $H$ be a closed subgroup of $G$ (so that  $\Tree H$  is a subtree of $\Tree G$). 
  Then $H$ is computably t.d.l.c.\ via the Baire presentation based on the $\Tree H$ (which is c.l.c.),  with   the operations of $G$  restricted  to $H$. \end{fact}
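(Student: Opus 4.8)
The plan is to check directly that the proposed structure satisfies the two conditions of \cref{def:main1}. Write $\Tree{H} = \{\sigma \in \Tree{G} \colon [\sigma]_{\Tree{G}} \cap H \neq \ES\}$ for the subtree of $\Tree{G}$ cut out by the closed subgroup $H$, exactly as in the earlier treatment of closed subgroups of $\S$; since $H$ is a computable closed subgroup, $\Tree{H}$ is a computable subtree of $\Tree{G}$. The first point to record is that $\Tree{H}$ has no leaves and that $[\Tree{H}] = H$. No leaves: if $\sigma \in \Tree{H}$, some $f \in H$ has $\sigma \prec f$, and then $f\uhr{|\sigma|+1} \in \Tree{H}$ properly extends $\sigma$. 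And $[\Tree{H}] = H$: the inclusion $H \subseteq [\Tree{H}]$ is immediate, while any path $f$ all of whose initial segments meet $H$ lies in the topological closure of $H$, which is $H$ since $H$ is closed. (As usual we take $H$ to be infinite, so that it is a t.d.l.c.\ group in our sense.)

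Next I would verify that $\Tree{H}$ is computably locally compact in the sense of \cref{def:comploccompact}. Because $\Tree{H} \subseteq \Tree{G}$ and $\Tree{G}$ is c.l.c., only the root of $\Tree{H}$ can have infinitely many successors; moreover, any computable $H_0 \colon \NN \times \NN \to \NN$ witnessing that $\Tree{G}$ is c.l.c.\ witnesses the same property for $\Tree{H}$, since every nonempty string on $\Tree{H}$ is in particular a string on $\Tree{G}$. Hence $\Tree{H}$ is c.l.c.

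Finally, for the operations: $\Op$ is computable as a function $[\Tree{G}] \times [\Tree{G}] \to [\Tree{G}]$ via some oracle Turing machine $M$. Since $H$ is a subgroup, when both oracles are paths in $H$ the output of $M$ is a path lying in $H \subseteq [\Tree{H}]$; so the same machine $M$ witnesses that the restriction of $\Op$ to $H \times H$ is computable as a map $[\Tree{H}] \times [\Tree{H}] \to [\Tree{H}]$ in the sense of \cref{def:computable 1}, and likewise for $\Inv$. The resulting structure is a topological group, being the closed subgroup $H$ of $G$ equipped with its subspace topology, which coincides with the path-space topology of $[\Tree{H}]$; and it is t.d.l.c.\ as a closed subgroup of a t.d.l.c.\ group. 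All of this is pure unwinding of definitions, so there is no real obstacle --- which is why the statement is flagged as immediate; the only things genuinely worth writing down are the identification $[\Tree{H}] = H$ and the inheritance of the c.l.c.\ bound from $\Tree{G}$ to $\Tree{H}$.
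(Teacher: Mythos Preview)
The paper gives no proof beyond the phrase ``The following is immediate,'' and your proposal correctly supplies the routine verification the paper omits: $\Tree H$ inherits the c.l.c.\ bound from $\Tree G$, $[\Tree H] = H$ by closedness of $H$, and the oracle machines for $\Op$ and $\Inv$ restrict without change. Your reading that computability of $\Tree H$ is an implicit hypothesis --- signalled by the fact's title ``Computable closed subgroups'' and the parenthetical ``(which is c.l.c.)'' --- is the intended one, and matches how the fact is applied in the examples that follow.
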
    
For instance, consider the closed subgroups  $U(F)$ of $\Aut(T_d)$, where $d \ge 3$ and $F $ is a subgroup of  $S_d$, introduced by Burger and Mozes~\cite{Burger.Mozes:00}. By \cref{ex:Td} together with the preceding fact, each group $U(F)$ is computably t.d.l.c.

For another  example, consider the computable Baire presentation of $\SL_2(\QQ_p)$   given by   \cref{prop: SL2}. Let $S$ be the c.l.c.\  subtree of $T$ whose  paths describe    matrices of the form  $\begin{pmatrix} r & 0\\0 & s \end{pmatrix}$ (so that $s= r^{-1}$). This yields   a computable Baire presentation of the  group $(\QQ_p^*, \cdot)$. 

\begin{example} \label{ex:GL2} For each prime $p$ and $n \ge 2$, the   group 
 $\GL_n(\QQ_p)$ is  computably t.d.l.c.
\end{example} 
\begin{proof} 
We   employ  the embedding $F\colon GL_n(\QQ_p) \to SL_{n+1}(\QQ_p)$ which extends  a matrix $A$ to the matrix $B$ where the new row and new column vanish except for the diagonal element (which    necessarily  equals $(\det A)^{-1}$).  Clearly there is a c.l.c.\ subtree $S$ of    the c.l.c.\  subtree of $T$  in  \cref{prop: SL2} for $n+1$ such that $[S] = \range (F)$. Now we apply \cref{fact:immediate}. \end{proof}

A further construction staying within the class of t.d.l.c.\ groups is the semidirect product based on a continuous action. In the effective setting, we use  actions that are computable in the sense of Section~\ref{s:comp notions}.
\begin{prop}[Closure under computable semidirect products] \label{prop:sdprod}
Let $G, H$ be computably  t.d.l.c.\ groups. Suppose $\Phi \colon G \times H \to H $ is a computable function   that specifies an action of $G$ on $H$ via topological automorphisms. Then the topological semidirect product $L=G \ltimes_\Phi H$ is computably t.d.l.c. \end{prop}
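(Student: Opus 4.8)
The plan is to produce an explicit computable Baire presentation of $L$ whose underlying tree is a ``pairing'' of the trees of the given presentations of $G$ and $H$, and then to express the operations of $L$ as computable combinations of $\Op_G,\Inv_G,\Op_H,\Inv_H$ and $\Phi$.

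First I would fix computable Baire presentations, so that we may assume $G=([\Tree G],\Op_G,\Inv_G)$ and $H=([\Tree H],\Op_H,\Inv_H)$ with $\Tree G,\Tree H$ c.l.c.\ trees, and that $\Phi\colon [\Tree G]\times[\Tree H]\to[\Tree H]$ is computable in the sense of \cref{def:computable 1}. Following the $\otimes$-construction already used in this part (cf.\ the definition of $\sss_0\otimes\sss_1$ and the tree $\wt T$ in the proof of \cref{lem: decide inclusion}), let $\Tree L$ be the tree of all initial segments of strings $\sss_0\otimes\sss_1$ with $\sss_0\in\Tree G$, $\sss_1\in\Tree H$ of equal length. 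Then $\Tree L$ is a computable subtree of $\NN^*$ without leaves, and its only possibly infinitely branching node is the root, since the set of immediate successors of $\sss_0\otimes\sss_1$ is in computable bijection with the product of the successor sets of $\sss_0$ and of $\sss_1$, which is finite unless both $\sss_0$ and $\sss_1$ are empty. Moreover, if $H_G$ and $H_H$ are computable bounding functions witnessing that $\Tree G$ and $\Tree H$ are c.l.c.\ (\cref{def:comploccompact}), then $H_L(\la a,b\ra,i):=\la H_G(a,i),H_H(b,i)\ra$ witnesses that $\Tree L$ is c.l.c., using that Cantor pairing is a computable bijection so that $a,b$ can be recovered from $\la a,b\ra$. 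Finally $[\Tree L]$ is canonically homeomorphic to $[\Tree G]\times[\Tree H]$, which is exactly the underlying topological space of $L=G\ltimes_\Phi H$; note that $L$ is a t.d.l.c.\ group, being a topological semidirect product of t.d.l.c.\ groups along a continuous action (computability of $\Phi$ entails continuity).

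Next I would record the group operations of $L$ under this identification. Writing the multiplication of $L$ in the standard form $(g_1,h_1)(g_2,h_2)=(g_1g_2,\,\Phi(g_2,h_1)\,h_2)$ and the inverse as $(g,h)^{-1}=(g^{-1},\Phi(g^{-1},h^{-1}))$ (cf.\ \cref{ex:Qp}), one has, for pairs $(g,h)\in[\Tree G]\times[\Tree H]$,
\begin{align*}
\Op_L\big((g_1,h_1),(g_2,h_2)\big) &= \big(\Op_G(g_1,g_2),\ \Op_H(\Phi(g_2,h_1),h_2)\big),\\
\Inv_L(g,h) &= \big(\Inv_G(g),\ \Phi(\Inv_G(g),\Inv_H(h))\big).
\end{align*}
Each coordinate of the right-hand sides is obtained from the computable functions $\Op_G,\Inv_G,\Op_H,\Inv_H,\Phi$ by composition and by pairing of inputs and outputs. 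Since computable functions between path spaces of computable trees are closed under composition (to output a bounded number of symbols an oracle machine makes a bounded number of queries, so plugging the output of one such machine into an oracle tape of another again yields a computable function, compare \cref{lem:lahm}(i)) and under pairing, and since the intermediate value $\Phi(g_2,h_1)$ is a genuine path of $\Tree H$ on which $\Op_H$ behaves correctly, both $\Op_L$ and $\Inv_L$ are computable in the sense of \cref{def:computable 1}. Hence $([\Tree L],\Op_L,\Inv_L)$ is a computable Baire presentation topologically isomorphic to $L$, so $L$ is computably t.d.l.c.

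The argument is essentially bookkeeping. The two points that deserve a little care are the verification that the paired tree $\Tree L$ is genuinely c.l.c.\ with a computable bounding function of the displayed form, and the closure of computable path-space functions under feeding the output of one into an oracle slot of another. I do not anticipate a substantive obstacle: no new effectivity ideas beyond those already developed in the proofs of \cref{thm:main2} and \cref{prop: SL2} are required.
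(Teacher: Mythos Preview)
Your proof is correct and follows essentially the same approach as the paper: the same paired tree $\Tree L$, the same semidirect-product formulas, and the same conclusion that computability of $\Op_L,\Inv_L$ follows from that of $\Op_G,\Inv_G,\Op_H,\Inv_H,\Phi$. Your inverse formula $\Inv_L(g,h)=(\Inv_G(g),\Phi(\Inv_G(g),\Inv_H(h)))$ differs cosmetically from the paper's $(g^{-1},(\Phi(g^{-1},h))^{-1})$, but these agree since $\Phi(g^{-1},\cdot)$ is an automorphism of $H$; you also supply a bit more detail on the c.l.c.\ bounding function and on closure under composition than the paper does.
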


\begin{proof} Let $T$ be the tree obtained by   pairing corresponding components of  strings of the same length from the trees of $G$ and $H$, i.e.\ 
\bc $T= \{ \sss \otimes \tau \colon \, \sss \in \Tree G \lland \tau \in \Tree H\}$. \ec
It is clear that $T$ is a c.l.c.\ tree. Via the  natural bijection \bc $[T] \to [\Tree G] \times [ \Tree H]$, \ec  one  can write elements of $L$ in the form $\la g, h \ra$ where $g\in [\Tree G]$ and $h \in [\Tree H]$. 

By the standard  definition of semidirect product (\cite[p.\ 27]{Robinson:82}), writing the operations for $G$ and $H$ in the usual group theoretic way, we have 
\begin{eqnarray*} \Op ( \la g_1, h_1 \ra, \la g_2, h_2\ra) &=& \la g_1 g_2, \Phi(g_2, h_1) h_2\ra \\
\Inv( \la g, h \ra)  & = & \la g^{-1}, (\Phi (g^{-1}, h))^{-1}\ra . \end{eqnarray*}
This shows that  $\Op$ and $\Inv$ are computable, and hence   yields a computable Baire presentation $([T], \Op, \Inv)$ for $L$. 
\end{proof}

  The next two closure properties are proved in the underlying paper \cite[Section 11]{Melnikov.Nies:22}.
  For local direct products see Wesolek  \cite[Def.\ 2.3]{Wesolek:15}.
\begin{prop}[Prop 11.5 in \cite{Melnikov.Nies:22}]  \label{prop: AAA} Let $(G_i)\sNp i$ be  computably t.d.l.c.\ groups uniformly in $i$,  and for each $i$  let $U_i$ be a compact open subgroup of $G_i$, uniformly in $i$.  Then $G=\bigoplus \sNp i  (G_i, U_i) $ is computably t.d.l.c. \end{prop}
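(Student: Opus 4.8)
The plan is to construct a computable Baire presentation of $G$ directly; by \cref{thm:main2} this automatically yields a presentation via a meet groupoid as well. Uniformly in $i$ we are given the c.l.c.\ tree $\Tree{G_i}$, the c.l.c.\ bound $H_i$ from \cref{def:comploccompact}, the computable operations $\Op_i,\Inv_i$, and a code number $v_i$ with $U_i = \+ K^{\Tree{G_i}}_{v_i}$. Recall that an element of $G$ is a sequence $(g_i)\sNp i$ with $g_i\in G_i$ and $g_i\in U_i$ for all but finitely many $i$; write $F(g)=\{i\colon g_i\notin U_i\}$, a finite set, and let $a_i(g_i)\in\NN$ be the first entry of the (unique) path of $\Tree{G_i}$ corresponding to $g_i$.

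First I would build the tree $T$. Its root has infinitely many successors, each coding a pair $\la F,(a_i)_{i\in F}\ra$ with $F\subseteq\NN^+$ finite and $\la a_i\ra\in\Tree{G_i}$; below the root, the entry of a path at position $\la i,n\ra$ (under a fixed computable pairing) records the $n$-th entry of the $\Tree{G_i}$-string of $g_i$, subject to the constraint that the resulting string lies on $\Tree{G_i}$, agrees with $a_i$ in position $0$ when $i\in F$, and extends to a path landing in $U_i$ when $i\notin F$. Because $i$ is recovered by decoding the position, only the root is infinitely branching, and the bound $H(\rho(0),p)$ required by \cref{def:comploccompact} is computable: decode $p$ to $(i,n)$, and return $H_i(a_i,n)$ when $i$ belongs to the set $F$ coded by $\rho(0)$, and otherwise a bound on the $n$-th entries of strings in $\+ K^{\Tree{G_i}}_{v_i}$ (a bound exists since this set is compact, and is computable from $i$, hence from $p$). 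One checks $T$ has no leaves, using that each $\Tree{G_i}$ has no leaves and $U_i$ is a finite union of cones. Thus $T$ is c.l.c., and the map $[T]\to G$ reading off $(g_i)$ from a path is a bijection: surjective because every element of $G$ gives rise to the support set and head data just described, and injective because this data together with the remaining coordinate entries determines the element.

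Next I would verify computability of $\Op$ and $\Inv$. For $\Inv$: given a path coding $(g_i)$ with support $F$, output the path for $(g_i^{-1})$; here $F(g^{-1})=F$ since each $U_i$ is a subgroup, and the new head entries $\Inv_i(g_i)(0)$ and the coordinate entries $\Inv_i(g_i)(n)$ are computable uniformly in $i$ from oracle queries. For $\Op$: given paths coding $(g_i)$ with support $F_1$ and $(g'_i)$ with support $F_2$, note $F(gg')\subseteq F_1\cup F_2$, again because the $U_i$ are subgroups; so the machine computes $h_i=\Op_i(g_i,g'_i)$ and, for the finitely many $i\in F_1\cup F_2$, decides whether $h_i\in U_i$ — this needs only finitely many oracle queries, since $\+ K^{\Tree{G_i}}_{v_i}$ is clopen and the relevant initial part of $\Tree{G_i}$ is finitely branching — thereby determining $F(gg')$. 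It then outputs the root code $\la F(gg'),(h_i(0))_{i\in F(gg')}\ra$ followed by the interleaved entries $h_i(n)$. Since the head data depends only on finitely many coordinates known in advance, the machine can commit to the root value after finitely many steps, so $\Op$ is computable. Under the bijection $[T]\to G$ these operations are the group operations of $G$, and the bijection is a homeomorphism (the preimage of a basic open set $\prod_iO_i$ of $G$, with $O_i=U_i$ for large $i$, is clopen in $[T]$); hence $([T],\Op,\Inv)$ is a computable Baire presentation of $G$.

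The main obstacle is the construction in the second paragraph: arranging that the unbounded-but-finite support set $F$ together with the heads $a_i$ of the finitely many nontrivial coordinates are absorbed into the root, so that $T$ is effectively locally compact with only the root infinitely branching, while simultaneously keeping the encoding \emph{canonical} — so that $[T]$ is genuinely isomorphic to $G$ rather than a non-injective cover of it — and keeping the bound function $H$ computable. Once the tree is set up, computability of $\Op$ and $\Inv$ and the check that the presentation is a topological group isomorphic to $G$ are routine, relying only on uniformity of the given data and \cref{lem: comp index tree}.
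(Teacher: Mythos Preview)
The paper does not itself prove this proposition; it merely records the statement and defers to \cite[Section 11]{Melnikov.Nies:22} for the argument. Your strategy---absorbing the finite support set $F=\{i\colon g_i\notin U_i\}$ together with the head entries $(a_i)_{i\in F}$ into the root of a c.l.c.\ tree, and interleaving the coordinate strings below---is the natural construction and works in outline.

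There is, however, a gap you name in your final paragraph but do not actually close. For $i\in F$ your only constraint on the $i$-th coordinate string is that it lie on $\Tree{G_i}$ and satisfy $g_i(0)=a_i$; nothing forces $g_i\notin U_i$. Hence a sequence $(g_i)$ with $g_1\in U_1$ is represented both by a path with root datum $F=\emptyset$ and by one with $F=\{1\}$ and $a_1=g_1(0)$, so the map $[T]\to G$ is not injective and you do not yet have a Baire presentation of $G$. The fix is to add, for each $i\in F$, the constraint that the partial $g_i$-string extend to a path in the compact open set $[\la a_i\ra]_{\Tree{G_i}}\setminus U_i$; a code number for this set is computable from $a_i$ and $v_i$ via \cref{lem: comp index tree}, so the amended tree is still computable and without leaves (one should also restrict the root data to those $(F,(a_i))$ with $[\la a_i\ra]\not\sub U_i$, which is decidable by the same lemma). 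With that correction the bijection $[T]\to G$ holds, and your verification of the c.l.c.\ bound $H$ and of the computability of $\Op$ and $\Inv$ goes through as written.
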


 The hardest one is the closure under quotients by computable closed normal subgroups.
 \begin{thm}[Thm.  11.11 in \cite{Melnikov.Nies:22}] \label{thm:closure normal} Let $G$ be computably t.d.l.c. Let $N$ be a closed normal subgroup of $G$ such that $\Tree N$ is a computable  subtree of $\Tree G$. Then $G/N$ is computably t.d.l.c. \end{thm}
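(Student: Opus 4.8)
The plan is to go through \cref{thm:main2}: it suffices to equip $G/N$ with a Haar computable meet groupoid, i.e.\ to produce a Haar computable copy of $\+W(G/N)$. Throughout I would regard $G$ as the given computable Baire presentation $([\Tree G],\Op,\Inv)$, write $q\colon G\to G/N$ for the quotient map, and work with the Haar computable copy $\+W:=\+W_{\mathrm{comp}}(G)$ of $\+W(G)$ supplied by \cref{def:Wcomp}; recall that from $n\in\+W$ one computes a code number $u$ with $A_n=\+K_u$. Since $N$ is closed and normal, for a compact open coset $A=gU$ of $G$ the image $q(A)=q(g)\,q(U)$ is again a compact open coset of $G/N$ (the continuous open image of the compact open set $A$), so $q$ induces a map $\Pi\colon\+W\setminus\{\ES\}\to\+W(G/N)\setminus\{\ES\}$. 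This map is onto: every compact open subgroup of $G/N$ has the form $q(U)$ for a compact open subgroup $U\le G$ (so that $q^{-1}(q(U))=UN$), and hence every compact open coset of $G/N$ has the form $q(A)$ with $A\in\+W$ — a structural fact about t.d.l.c.\ groups that I would import (cf.\ \cite{Wesolek:15} and the proof of Thm.~11.11 in \cite{Melnikov.Nies:22}). Moreover $\Pi(A)=\Pi(B)$ iff $AN=BN$; write $A\approx B$ for this relation. Thus $\+W(G/N)\cong(\+W/{\approx},\cdot,{}^{-1},\cap)$, where inversion and the partial product descend directly since $q$ is a homomorphism, while on the quotient $\Pi(A)\cap\Pi(B)=\Pi(A\cap BN)$ — one checks $q(A)\cap q(B)=q(A\cap BN)$.

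Everything now reduces to making $\approx$, the groupoid operations, and the intersection effective, and this is where computability of $\Tree N$ enters. The key atomic observation is that the predicate ``$\+K_u\cap N\neq\ES$'' is decidable: $\+K_u=\bigcup_{\eta\in u}[\eta]_{\Tree G}$, and $[\eta]_{\Tree G}\cap N\neq\ES$ holds iff $\eta\in\Tree N$, which is decidable since $\Tree N$ is computable. Using this together with a compactness argument, one can, given a code number $u$ and $B\in\+W$, compute a code number for the set $\+K_u\cap BN$, which is clopen (hence compact open, $q(B)$ being closed in $G/N$): a cone $[\eta]_{\Tree G}$ is disjoint from $BN$ iff $B^{-1}[\eta]_{\Tree G}\cap N=\ES$ (a decidable test, since $B^{-1}[\eta]_{\Tree G}$ is a computable compact open set, via the computable group operation and \cref{lem: decide inclusion}), whereas $[\eta]_{\Tree G}\subseteq BN$ is witnessed by $[\eta]_{\Tree G}\subseteq\bigcup_{\tau\in L_0}B[\tau]_{\Tree N}$ for some finite set $L_0$ of minimal strings of $\Tree N$; refining the cone decomposition of $\+K_u$ level by level and classifying each cone in one of these two ways is guaranteed to succeed, as $\+K_u\cap BN$ is clopen in the compact set $\+K_u$. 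Consequently ``$\+K_{u_1}N\subseteq\+K_{u_2}N$'' — equivalently $\+K_{u_1}\subseteq\+K_{u_2}N$, i.e.\ $\+K_{u_1}\cap\+K_{u_2}N=\+K_{u_1}$ — is decidable, and therefore so is $\approx$ on $\+W$.

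With $\approx$ decidable, let $\+V$ be the (computable) set of least $\approx$-representatives. Inversion on $\+V$ is $\Pi(A)^{-1}=\Pi(A^{-1})$. For the product, $\Pi(A)\cdot\Pi(B)$ is defined exactly when a certain $\approx$-condition on the subgroups attached to $A$ and $B$ holds (decidable), and, when it is, one searches $\+W$ for some $B''$ with $B''\approx B$ for which $A\cdot B''$ is already defined in $\+W$ — such $B''$ exists by the lifting fact — and outputs $\Pi(A\cdot B'')$. For the intersection, compute a code for $A\cap BN$ as above, then search $\+W$ for $C$ with $CN=(A\cap BN)N$ (one exists by the coset form of the lifting fact, and the condition is decidable by the previous paragraph), and output $\Pi(C)$. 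Finally the index function $U,V\mapsto|U\colon U\cap V|$ on $\+V$ is obtained by the standard enumeration: list distinct cosets of $q(U)\cap q(V)$ inside $q(U)$ until they exhaust it, the exhaustion being decidable once the groupoid and meet operations on $\+V$ are computable. Hence $\+V$ is Haar computable and $\+V\cong\+W(G/N)$, so $G/N$ is computably t.d.l.c.\ by \cref{thm:main2}.

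The main obstacle is precisely the decidability of ``$AN=BN$'' on $\+W$ and, intertwined with it, the computation of the truncations $\+K_u\cap BN$: the subtlety is that $N$, being merely closed, is not an element of $\+W$ and the saturations $BN$ are in general noncompact open sets, so one cannot manipulate them as single cosets. The argument above circumvents this by reducing every question about $N$ to the single decidable atomic predicate ``a given compact open set meets $N$'', and by exploiting that the relevant $N$-saturations are actually clopen (their images in $G/N$ being compact), so that the required approximations stabilise at a finite level. A secondary point needing care is the appeal to the lifting fact: only its existence content is used — to guarantee that the unbounded searches for $B''$ and $C$ terminate — but it is essential for surjectivity of $\Pi$, hence for $\+V$ being all of $\+W(G/N)$ rather than a proper substructure.
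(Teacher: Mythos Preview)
Your overall strategy --- pass to the meet groupoid side, show the $N$-saturation relation $\approx$ is decidable, and build $\+W(G/N)$ as a quotient of $\+W$ --- is natural, and your treatment of the atomic predicate ``$\+K_u\cap N\neq\ES$'' is correct and is indeed the point where computability of $\Tree N$ enters. However, the argument has a genuine gap at a place you flagged as a ``structural fact to import'': the map $\Pi\colon\+W\setminus\{\ES\}\to\+W(G/N)\setminus\{\ES\}$ is \emph{not} onto in general.

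Take $G=\ZZ$ with the discrete topology and $N=2\ZZ$; both hypotheses of the theorem are met. The compact open subgroups of $G$ are exactly the finite ones, so the only one is $\{0\}$, and the compact open cosets of $G$ are the singletons $\{n\}$. On the other hand $G/N\cong\ZZ/2\ZZ$ is itself compact, so the whole group $\ZZ/2\ZZ$ is a compact open coset in $G/N$. It is not of the form $q(A)$ for any singleton $A$, hence not in the image of $\Pi$. So $\+W(G/N)$ is \emph{not} isomorphic to $\+W/{\approx}$, and your construction produces only a proper sub-meet-groupoid. The underlying reason is that a compact open subgroup $V\le G/N$ pulls back to an open subgroup $q^{-1}(V)\ge N$ with $q^{-1}(V)/N$ compact; this is compact in $G$ only when $N$ itself is compact. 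What \emph{is} true is that every compact open coset of $G/N$ is a finite union of sets $\Pi(A)$ (since $q(U)$ has finite index in $V$ for any compact open $U\le G$), so the correct encoding of an element of $\+W(G/N)$ is by a finite subset of $\+W$, with a decidable condition saying that the corresponding finite union of $N$-saturations is a single coset of an open subgroup containing $N$. Your decidability machinery for $\approx$ and for truncations $\+K_u\cap BN$ is still relevant, but you must work with these finite unions rather than single representatives.

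A smaller point: in your argument that $[\eta]_{\Tree G}\subseteq BN$ is semi-decidable, the witnesses $B[\tau]_{\Tree N}$ are compact but typically not open (since $[\tau]_{\Tree N}=[\tau]_{\Tree G}\cap N$ need not be open in $G$), so they do not directly give an open cover. The fix is straightforward: from the computable leafless tree $\Tree N$ one extracts a computable dense sequence $(n_i)\sN i$ in $N$, and then $BN=\bigcup_i Bn_i$ is an effective cover by compact \emph{open} sets, to which compactness of $[\eta]_{\Tree G}$ applies.
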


 \begin{example} For each prime $p$ and each $n \ge 2$, the group $\PGL_n(\QQ_p)$ is computably t.d.l.c. \end{example} 
 \begin{proof} 
In \cref{ex:GL2} we obtained a    computable Baire presentation $(T,   \Op, \Inv) $ of $\GL_n(\QQ_p)$.  In this presentation,  the centre $N$ of $\GL_n(\QQ_p)$ is given by the diagonal $(n+1) \times (n+1)$ matrices such that the first $n$ entries of the diagonal agree. So clearly $\Tree N$ is a  computable  subtree  of the tree $S$ in \cref{ex:GL2}. Hence we can apply \cref{thm:closure normal}.
 \end{proof}

\section{Uniqueness of computable  presentation} \label{s:autostable} 
As discussed in Subsection~\ref{ss:auto},  a countable structure is called  {autostable}  if  it has a computable copy, and all  its computable copies are computably isomorphic.  We adapt this notion to the present setting. 

\begin{definition} A  computably t.d.l.c.\ group $G$ is called \emph{autostable} if for any two computable Baire presentations of $G$, based on    trees  $T, S\sub \NN^*$, there is a computable group homeomorphism $\Psi \colon [T] \to [S]$. Note that $\Psi^{-1}$ is also  computable  by \cite[Cor 9.3]{Melnikov.Nies:22}.  \end{definition}

 %

We now provide a criterion for autostability, and show its usefulness  through various examples. 
\begin{prop} \label{thm:compCrit} A  computably  t.d.l.c.\ group $G$  is autostable $\LR$  
 any two Haar computable copies of   its meet groupoid $\+ W(G)$ are computably isomorphic. \end{prop}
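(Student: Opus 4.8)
The key tool is the pair of constructions relating Baire presentations and Haar computable meet groupoids: the operator $\+ W_{\text{comp}}$ sending a computable Baire presentation $G$ to the Haar computable copy $\+ W_{\text{comp}}(G)$ of $\+ W(G)$ (\cref{def:Wcomp}), and the operator $\+ G_{\text{comp}}$ sending a Haar computable meet groupoid $\+ W$ to the computably t.d.l.c.\ group $\+ G_{\text{comp}}(\+ W) \le \S$ with its c.l.c.\ tree $\Tree{\+ G_{\text{comp}}(\+ W)}$ (\cref{def:Gof} together with \cref{cl:fcuk}); by \cref{thm:main2} these are mutually inverse up to the appropriate notion of isomorphism, and crucially all passages are \emph{uniform}. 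The plan is to show that a computable isomorphism on one side can be transported to the other side, in both directions.

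\textbf{Direction $(\Leftarrow)$.} Assume any two Haar computable copies of $\+ W(G)$ are computably isomorphic. Let $[T]$ and $[S]$ be two computable Baire presentations of $G$. Form $\+ W_{\text{comp}}$ applied to each: we obtain Haar computable meet groupoids $\+ W_T$ and $\+ W_S$, each a computable copy of $\+ W(G)$. By hypothesis there is a computable isomorphism of meet groupoids $h\colon \+ W_T \to \+ W_S$. Now apply $\+ G_{\text{comp}}$: since the construction of $\+ G_{\text{comp}}(\+ W)$ and of its tree $\Tree{\+ G_{\text{comp}}(\+ W)}$ is uniform in (a Haar computable index for) $\+ W$, a computable isomorphism $h$ between $\+ W_T$ and $\+ W_S$ induces a computable group homeomorphism $\+ G_{\text{comp}}(\+ W_T) \to \+ G_{\text{comp}}(\+ W_S)$: concretely, $p \mapsto h \circ p \circ h^{-1}$ at the level of the injections coded by paths of $\Tree\S$, which is computable because $h,h^{-1}$ are computable and composition on $\Tree\S$ is computable. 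Finally, by \cref{thm:main2} there are computable group homeomorphisms $[T] \to \+ G_{\text{comp}}(\+ W_T)$ and $\+ G_{\text{comp}}(\+ W_S) \to [S]$ (these are the content of the $\Leftarrow$ direction of that theorem, and are obtained uniformly); composing the three computable maps yields a computable group homeomorphism $[T] \to [S]$, as required for autostability.

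\textbf{Direction $(\Rightarrow)$.} Assume $G$ is autostable. Let $\+ W_0$ and $\+ W_1$ be two Haar computable copies of $\+ W(G)$. Apply $\+ G_{\text{comp}}$ to each to obtain computable Baire presentations $G_0 = \+ G_{\text{comp}}(\+ W_0)$ and $G_1 = \+ G_{\text{comp}}(\+ W_1)$ of $G$, based on c.l.c.\ trees $T_0 = \Tree{G_0}$, $T_1 = \Tree{G_1}$. By autostability there is a computable group homeomorphism $\Psi\colon [T_0] \to [T_1]$, with $\Psi^{-1}$ also computable. Now apply $\+ W_{\text{comp}}$: the computable action of $[T_i]$ on $\+ W_{\text{comp}}(G_i)$ (\cref{cor:action}), together with the identification $\+ W_{\text{comp}}(\+ G_{\text{comp}}(\+ W_i)) \cong \+ W_i$ from \cref{thm:main2}, lets us push $\Psi$ forward to a computable isomorphism of meet groupoids $\+ W_{\text{comp}}(G_0) \to \+ W_{\text{comp}}(G_1)$; composing with the computable isomorphisms $\+ W_i \to \+ W_{\text{comp}}(G_i)$ gives a computable isomorphism $\+ W_0 \to \+ W_1$. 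The one point needing care here is that the isomorphism $\+ W_{\text{comp}}(\+ G_{\text{comp}}(\+ W)) \cong \+ W$ supplied by \cref{thm:main2} is itself computable (uniformly in the data); this should already be extractable from the proof of that theorem, since the compact open cosets of $\+ G_{\text{comp}}(\+ W)$ are by construction named by the elements of $\+ W$ via \cref{lem: sst}.

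\textbf{Main obstacle.} The routine part is transporting maps along uniform constructions; the real work is checking that the \emph{isomorphisms witnessing} $\+ W(\+ G(\cdot)) \cong \mathrm{id}$ and $\+ G(\+ W(\cdot)) \cong \mathrm{id}$ in \cref{thm:main2} are computable and obtained uniformly, rather than merely existing. This is where one must look inside the proof of \cref{thm:main2}: for $\Leftarrow$, the map $[T] \to \+ G_{\text{comp}}(\+ W_T)$ is (essentially) $g \mapsto$ (its left action on $\+ W_{\text{comp}}(G)$), which is computable by \cref{cor:action}; for $\Rightarrow$, one needs that the bijection between compact open cosets of $\+ G_{\text{comp}}(\+ W)$ and elements of $\+ W$, implicit in \cref{cl:isomoG} and \cref{lem: sst}, is effective in both directions. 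Once these effectivity refinements of \cref{thm:main2} are in place, the equivalence of the two notions of autostability follows by the composition-of-computable-maps arguments sketched above.
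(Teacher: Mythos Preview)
Your proposal is essentially correct and follows the natural strategy: use the uniform constructions $\+ W_{\text{comp}}$ and $\+ G_{\text{comp}}$ from \cref{thm:main2} to transport computable isomorphisms from one side of the duality to the other. The paper itself does not give a proof here but defers to the full version \cite[Criterion~12.2]{Melnikov.Nies:22}; your outline matches what that argument does.

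Two small points are worth sharpening. In the $\Leftarrow$ direction, the map $\+ G_{\text{comp}}(\+ W_S) \to [S]$ you need is $\Phi_S^{-1}$, where $\Phi_S\colon [S] \to \+ G_{\text{comp}}(\+ W_S)$ is the left-action map. You correctly note that $\Phi_S$ is computable via \cref{cor:action}; that $\Phi_S^{-1}$ is computable is not quite ``the content of the $\Leftarrow$ direction of \cref{thm:main2}'' but rather the general fact, cited in the definition of autostability as \cite[Cor.~9.3]{Melnikov.Nies:22}, that a computable group homeomorphism between computable Baire presentations has computable inverse. In the $\Rightarrow$ direction, you rightly flag the effectivity of $\+ W_i \cong \+ W_{\text{comp}}(\+ G_{\text{comp}}(\+ W_i))$ as the crux. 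Concretely, the map sends $A \in \+ W_i$ (a left coset of $U = A^{-1}\cdot A$) to the coset $\hat A = \{p \in G_i : p(U)=A\}$; a code number for $\hat A$ is computable because \cref{lem: sst} lets one list, for each level of $\Tree{G_i}$, the finitely many strings compatible with $p(U)=A$, and the converse direction is a search. With these two clarifications in place your argument goes through.
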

We will only apply the implication ``$\LA$". However,  the converse implication  is interesting on its own right because it shows that our notion of autostability is independent of whether we use computable Baire presentation, or  computable presentations based on    meet groupoids.
\begin{proof} 
   See \cite{Melnikov.Nies:22}, proof of Criterion 12.2.
   \end{proof}

\begin{theorem}  \label{thm:autostable} The computably t.d.l.c.\ groups   $\QQ_p$ and $\ZZ \ltimes \QQ_p$ are   autostable. \end{theorem}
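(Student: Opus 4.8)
The plan is to invoke \cref{thm:compCrit}, so that it suffices to show: for $G=\QQ_p$ and for $G=\ZZ\ltimes\QQ_p$, any two Haar computable copies of the meet groupoid $\+W(G)$ are computably isomorphic. I would carry this out for $\QQ_p$ in detail and then indicate the minor changes for $\ZZ\ltimes\QQ_p$, referring the routine bookkeeping to \cite{Melnikov.Nies:22}.

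First I would record the structure of $\+W(\QQ_p)$ extracted in \cref{ex:Qp}: its nonzero idempotents are exactly the $U_r=p^r\ZZ_p$, they form a chain of order type $\zeta$ under $\subseteq$ with $|U_r:U_s|=p^{s-r}$ whenever $r\le s$, and every coset is of the form $D_{r,a}=\pi_r^{-1}(a)$ with $a$ in the Pr\"ufer group $C_{p^\infty}$. In an arbitrary Haar computable copy, ``$A$ is an idempotent'' (i.e.\ $A\cdot A=A$) is decidable, the partial order $\subseteq$ is computable, and the index function $(U,V)\mapsto|U:U\cap V|$ is computable, by \cref{def:comp_meet_groupoid}. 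Hence the immediate successor of an idempotent $U$ in this chain — the unique idempotent $V\subsetneq U$ with $|U:V|=p$ — and its immediate predecessor are computable. Fixing the $\NN$-least nonzero idempotent of each copy as an origin (any two nonzero idempotents are automorphic, via multiplication by $p$ on $\QQ_p$, so matching the origins is consistent), I would build a computable, order- and index-preserving bijection $\nu$ from the nonzero idempotents onto $\ZZ$, by reading off $\pm\log_p$ of the index relative to the origin, and thus a computable isomorphism $h_0$ between the idempotent parts of the two copies.

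Next I would extend $h_0$ to all cosets. For an idempotent $U$ let $L(U)=\{A:A\cdot A^{-1}=U\}$; this set is uniformly computable, and under $\cdot$ it is a group with identity $U$, isomorphic to $C_{p^\infty}$. For the immediate predecessor $U^-$ of $U$, the ``level map'' sending $A\in L(U)$ to the unique $B\in L(U^-)$ with $A\subseteq B$ is a computable surjective homomorphism $L(U)\to L(U^-)$ with kernel of order $p$; and one checks that the meet, inverse, and product operations of $\+W(\QQ_p)$ are entirely determined by the groups $L(U)$ together with these level maps. I would then define $h\supseteq h_0$ by a back-and-forth that enumerates both copies and maintains a finite partial isomorphism: to absorb a new coset $A\in L(U)$, note its $\cdot$-order is some finite $p^j$, so iterating the level map $j$ times carries $A$ to an idempotent already matched by $h_0$; one then assigns images to $A$ and its iterated level-map images from the bottom up, at each level choosing — by search — an element of the prescribed order having the prescribed level-map image and consistent with the finitely many cosets already in the range. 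Such a choice exists because $C_{p^\infty}$ is homogeneous (every finite partial automorphism extends) and all level maps are, after the group identification, the canonical surjection; and it is effective since the copies are computable. The limit $h$ is a computable isomorphism $\+W\to\+W'$.

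For $\ZZ\ltimes\QQ_p$, by \cref{ex:Qp} the compact open subgroups are again precisely the $U_r\le\QQ_p$, with the same chain structure and indices, so the idempotent parts are matched by a computable $h_0$ exactly as before. A coset has the form $g^zD_{r,a}$, and it is a left coset of $U_r=C^{-1}C$ and a right coset of $U_{r-z}=CC^{-1}$, so its $\ZZ$-grade $z=\nu(C^{-1}C)-\nu(CC^{-1})$ is determined once the idempotents are matched; for each fixed $z$ the cosets together with the level maps behave exactly as in the $\QQ_p$ case, so the same back-and-forth applies grade by grade. I expect the main obstacle to be the coset-level back-and-forth: organizing the extension so that consistency with \emph{all} the level maps (not merely within a single $L(U)$) is maintained while keeping everything computable; this is where one leans on the homogeneity of $C_{p^\infty}$ together with the effectivity provided by Haar computability.
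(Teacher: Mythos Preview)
Your treatment of $\QQ_p$ is correct and takes a genuinely different route from the paper. You run a back-and-forth using the computable \emph{level maps} $L(U_r)\to L(U_{r-1})$ (send a coset to the unique containing coset one level up), which are trivially computable in any Haar computable copy. The paper avoids back-and-forth entirely: it first proves that the \emph{shift} $S\colon D_{r,a}\mapsto D_{r+1,a}$ induced by $x\mapsto px$ is computable in \emph{every} Haar computable copy, via the existential characterisation $C=S(B)\Leftrightarrow \exists D\in L(U_{r+1})\,[D\subseteq B\land D^p=C]$; it then constructs a computable isomorphism $L(U_0)\to L(\wt U_0)$ directly (choosing a coherent tower of $p$-power-order generators---the standard proof that $C_{p^\infty}$ is computably categorical) and propagates to all $r$ by applying $\wt S^r$. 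Your route sidesteps the shift lemma; the paper's route sidesteps the bookkeeping of a two-sided back-and-forth.

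For $\ZZ\ltimes\QQ_p$, however, your sketch has a gap. At grade $z\neq 0$ the cosets $g^zD_{r,a}$ with fixed target $U_r$ do \emph{not} form a group under~$\cdot$---they are only an $L(U_r)$-torsor---so the phrase ``its $\cdot$-order is some finite $p^j$'' is meaningless there and your $\QQ_p$ back-and-forth does not transfer. More fundamentally, the groupoid product links distinct grades: if $A,B$ have grade $1$ and $A\cdot B$ is defined, then $A\cdot B$ has grade $2$, so building $h$ ``grade by grade'' does not by itself secure $h(A\cdot B)=h(A)\cdot h(B)$. The paper resolves this by first fixing the grade-$0$ isomorphism $\Delta$, then choosing a single coherent family of grade-$(-1)$ base points $F_r=\wt E_{-1,r,0}\colon \wt U_{r+1}\to\wt U_r$ with $F_{r+1}\subseteq F_r$ (possible by the inclusion relation \eqref{eqn:EEE} and the fact that $\+V\cong\wt{\+V}$), defining $\wt E_{z,r,0}$ for $z<-1$ as the product $F_{r-z-1}\cdots F_r$ and for $z>0$ by inversion, and finally setting $\wt E_{z,r,a}=\wt E_{z,r,0}\cdot\wt D_{r,a}$. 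The idea your sketch is missing is precisely this coherent choice of base points at a single nonzero grade, after which the groupoid structure forces everything else.
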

\begin{proof}   
  In  \cref{ex:Qp} we obtained  a   Haar computable copy  $\+ W$ of  the meet groupoid $\+ W(\QQ_p)$. Recall that the elements of $\+ W$ are given as cosets  $D_{r,a}=\pi_r^{-1}(a)$ where $r\in \ZZ$, $\pi_r \colon \ZZ_p \to C_{p^\infty}$ is the canonical projection with kernel  $U_r =p^{r} \ZZ_p$,  and $a \in C_{p^\infty}$.

By the criterion above, it  suffices to show that any Haar computable copy $\wt {\+ W}$ of $\+ W(\QQ_p)$ is computably isomorphic to $\+ W$. 
  By hypothesis on $\wt {\+ W}$  there is an isomorphism $\Gamma \colon \+ W \to \wt {\+ W}$. Let $\wt U_r = \Gamma(U_r)$ for $r\in \ZZ$.  We will construct a \emph{computable} isomorphism $\Delta\colon \+ W \to \wt {\+ W}$  which agrees with $\Gamma $ on the set   $\{U_r\colon \, r\in \ZZ\}$. 
First we show that from $r$ one can   compute the subgroup $\wt U_r \in \wt {\+ W}$. 

\bi \item[(a)] If $\wt U_r$ has been determined, $r\ge 0$, compute  $\wt U_{r+1}$ by searching for  the unique subgroup in $\wt {\+ W}$   that has index $p$ in $\wt U_r$.

\item[(b)] 
If $\wt U_r$ has been determined, $r\le 0$, compute  $\wt U_{r-1}$ by searching for  the unique subgroup in $\wt {\+ W}$ such that $\wt U_r$ has index $p$ in it.  \ei

The shift homeomorphism  $S\colon \QQ_p \to \QQ_p$ is defined by $S(x)= p x$.  Note that $B \to S(B)$ is   an automorphism of the meet groupoid $\+ W$.  Using  the notation of Example~\ref{ex:Qp} (recalled above), for each $\aaa \in \QQ_p, r \in \ZZ$,  one has $\pi_{r+1}(S(\aaa))=\pi_r(\aaa)$, and hence  for each $a \in C_{p^\infty}$, \begin{equation} \label{eqn:SSS} S(D_{r,a})= D_{r+1, a}. \end{equation}   
We show that $S$  is definable    within $\+ W$ by an existential formula using    subgroups~$U_r$ as parameters. Recall that given a meet groupoid $\+ W$, by $L(U)$ we denote  the set of left cosets of a subgroup $U$. For $D \in L(U_r)$ we write   $D^k$ for $D \cdot \ldots \cdot D$ (with $k$~factors), noting that  this is defined,  and in $L(U_r)$.%

\begin{claim} Let $B \in L(U_r)$ and $ C\in L(U_{r+1})$. Then
 \bc $C = S(B) \LR  \ex D \in L(U_{r+1})\, [D \sub B \lland D^p = C]$. \ec \end{claim}
\n  \lapf If $x \in C$ then $x = py$ for some $y \in B$, so $x \in S(B)$.    So $C \sub S(B)$  and hence $C= S(B)$ given that $S(B)\in L(U_{r+1})$. 
 
\rapf Let $x\in C$, so $x = S(y)$ for some $y \in B$. Let $y \in D$ where $D \in L(U_{r+1})$. Then $D\sub B$. Since $D^p \cap C \neq \ES$,   these two (left) cosets of $U_{r+1}$ coincide.  This shows the claim.

  We  use this to show that the function   $\wt S=\Gamma \circ S \circ \Gamma^{-1} $ defined on $\wt {\+ W}$ is computable. Since $\Gamma( U_r)= \wt U_r$,  ($r \in \ZZ$),   $\wt S $ satisfies   the claim when replacing the $U_r$ by the $\wt U_r$.  Since the meet groupoid $\wt {\+ W}$ is computable,  given $B \in \wt {\+ W}$,     one can search $\wt {\+ W}$ for a witness $D\in L(\wt U_{r+1})$ as on the right hand side, and then output $C= \wt S(B)$. So the function  $\wt S$ is computable.

We build the computable isomorphism $\Delta \colon \+ W \to \wt {\+ W}$ in four phases.    The first three phases build a computable   isomorphism $L( U_0) \to L(\wt U_0)$, where $L(\wt U_0) \sub \wt {\+ W}$ denotes the group    of left cosets of $\wt U_0$. (This group is isomorphic to $C_{p^\infty}$, so this amounts to defining a computable isomorphism between two computable copies of $C_{p^\infty}$.)  The last phase extends this isomorphism to all of $ \+  W$, using that $\wt S$ is an automorphism of $\wt {\+ W}$. 

%

For $q \in \ZZ[1/p] $ we write  $[q]= \ZZ + q \in C_{p^\infty}$.   We     define $\wt D_{r,[q]}= \Delta(D_{r,a})$ for $r \in \ZZ, q \in \ZZ[1/p] $

\bi \item[(a)] Let $\wt D_{0, [p^{-1}]}$ be an element of order $p$ in $L(\wt U_0) $. 
\item[(b)] Recursively,  for  $m>0$ let  $\wt D_{0, [p^{-m}]}$ be an element of order $p^m$ in $L(\wt U_0) $ such that $(\wt D_{0, [p^{-m}]})^p = \wt D_{0, [p^{-m+1}]}$.   
 \item[(c)]  For $a = [kp^{-m}]$ where $0 \le k <p^m$ and $p  $ does not divide $k$,  let $\wt D_{0, a} = (\wt D_{0, [p^{-m}]})^k$.
\item[(d)] For $r \in \ZZ -\{0\}$ let  $\wt D_{r,a} =\wt S^r (\wt D_{0,a})$.\ei 
One can easily verify that $\Delta \colon \+ W \to \wt {\+ W}$ is  computable  and preserves the   meet groupoid operations.   To  verify that $\Delta$ is onto, let  $B \in \wt {\+ W}$. We have $B \in L(\wt U_r)$ for some $r$.   There is a least $m$ such that $B= (\wt D_{r, [p^{-m}]})^k$ for some $k< p^m$.  Then  $p$ does not divide $k$, so $B= \wt D_{r, [kp^{-m}]}$. 

 \medskip
 
 \n   \emph{We next   treat   the   case of $G=\ZZ \ltimes \QQ_p$.} Let $\+ V$ be the Haar computable copy of~$\+ W(G)$ obtained in \cref{ex:Qp}, and let $\wt {\+ V}$ be a further  Haar computable copy of~$\+ W(G)$. Using the notation of \cref{ex:Qp}, let 
 \bc $E_{z,r,a}= g^zD_{r,a}$ for each $z, r \in \ZZ, a \in C_{p^\infty}$.   \ec We list some properties of these elements of $\+ V$ that will  be needed shortly. Note that we can view $\+ W$ as embedded into $\+ V$ by identifying $\la r,a\ra$ with $\la 0,r,a\ra$. 
Also note   that $E_{z,r,a} \colon U_{r-z} \to U_r$ (using the category notation discussed after \cref{fact:WGGW}). Since $D_{r+1, a} \sub D_{r,pa}$, we have 
   \begin{equation} \label{eqn:EEE} E_{z, r+1, a } \sub E_{z,r,pa}. \end{equation} 
  Furthermore,   \begin{equation} \label{eqn:EEE2} E_{z, r, 0 }= g^{z} U_r= U_{r+z} g^{z}= (g^{-z} U_{r-z})^{-1}= (E_{-z, r-z, 0 })^{-1}. \end{equation}   
    
 By hypothesis on $\wt {\+ V}$, there is a  meet groupoid  isomorphism $\ol \Gamma \colon \+ V \to \wt {\+ V}$.     Since $G$ has no compact open subgroups besides the ones present in $\+ W(\QQ_p)$,   the  family  $(\wt U_r)_{r \in \ZZ}$, where  $\wt U_r = \ol \Gamma(U_r)$,      is   computable in $\wt {\+ V}$  by the same argument as before.   The set of elements  $A$  of $\wt {\+ V}$ that are a  left and a right coset of the same  subgroup is computable by checking whether $A^{-1} \cdot A= A \cdot A^{-1}$. The operations of $\wt {\+ V}$   induce a Haar computable meet groupoid   $\wt {\+ W}$ on this set.  Clearly  the restricted  map    $\Gamma =\ol \Gamma \mid  \+ W$  is an isomorphism $\+ W \to \wt {\+ W}$. So by the case of $\QQ_p$, there is a computable isomorphism $\Delta \colon  \+ W \to \wt {\+ W}$.  
%
 
  We will extend $\Delta$ to a computable  isomorphism $\ol \Delta \colon \+ V \to \wt {\+ V}$.   The following summarizes the setting: 
\[\xymatrix{
   \+ V   \ar[r]^{\ol \Gamma , \ol \Delta}& \wt{\+ V}  \\
    \+ W \ar[u]^{\sub}      \ar[r]_{\Gamma,\Delta}                     & \wt { \+ W} \ar[u]_{\sub} } \]%
In five phases we define a computable family $\wt E_{z,r,a}$ ($z, r \in \ZZ, a \in C_{p^\infty}$), and then let $\ol \Delta(E_{z,r,a})= \wt E_{z,r,a}$.     As before write $\wt D_{r,a} = \Delta( D_{r,a})$.

  \bi \item[(a)] Let $\wt E_{0,r,a} = \wt D_{r,a}$.
 Choose   $F_0:= \wt E_{-1,0,0} \colon \wt U_1 \to \wt U_0$
  \item[(b)] compute $F_r:=\wt E_{-1,r,0}\colon U_{r+1} \to U_r$ by recursion on $|r|$, where $r \in \ZZ$, in such a way  that $\wt F_{r+1} \sub \wt F_r$ for each $r \in \ZZ$; this is possible by (\ref{eqn:EEE}) and since $\+ V \cong \wt {\+ V}$ via $\ol \Gamma$. 
  \item[(c)]
   For $z < -1$, compute  $ \wt E_{z,r,0}\colon U_{r-z} \to U_r$ as follows: 
   \bc $\wt E_{z,r,0}= F_{r-z-1} \cdot   F_{r-z-2} \cdot \ldots \cdot   F_r$. \ec
  \item[(d)] For $z>0$ let $E_{z,r,0}= (\wt E_{-z, r-z, 0 })^{-1}$; this is enforced by (\ref{eqn:EEE2}). 
  \item[(e)] Let $\wt E_{z,r,a}= \wt E_{z,r,0} \cdot \wt D_{r,a}$. 
  \ei   
One verifies that $\ol \Delta$  preserves the   meet groupoid operations (we omit the formal detail).  To show that $\ol \Delta$ is onto, suppose that $\wt E \in \+ V$ is given. Then $\wt E = \Gamma(E_{z,r,a})$ for some $z,r,a$. By (\ref{eqn:EEE}) we may assume that $z<0$.  Then   $  E_{z,r,0}=   \prod_{i=1}^{-z}E_{-1,r-z-i,0} $ as above. So, writing $F_s$ for $\wt E_{-1,s,0}$, we have  $\Gamma(E_{z,r,0})= \prod_{i=1}^{-z}F_{r-z-i} \wt D_{r-z-i, a_i}$ for some $a_i \in C_{p^\infty}$. 

Note  that  $\wt S(D)= F \cdot D \cdot F^{-1}$ for each $D \in L(\wt U_r)  \cap \wt {\+ W}$ and $F \colon \wt U_{r+1} \to  \wt U_r$. 
For, the analogous statement clearly holds in $\+ V$;  then one uses  that  $\wt S=\Gamma \circ S \circ \Gamma^{-1}  $, and that $\ol \Gamma \colon \+ V \to \wt {\+ V}$ is an isomorphism.   
Since $\wt D_{r+1,a}= \wt S(\wt D_{r,a})$, we may conclude that  $\wt D_{r+1, a}\cdot F = F \cdot D_{r,a}$ for each such $F$. We can use these ``quasi-commutation relations"  to simplify the expression $\prod_{i=1}^{-z}F_{r-z-i} \wt D_{r-z-i, a_i}$ to  $\wt E_{z,r,0} \wt D_{r,b}$ for some $b \in C_{p^\infty}$. Hence $\wt E = \wt E_{z,r,0} \wt D_{r,b} \wt D_{r,a}$. This  shows that $\wt E$ is in the range of $\ol \Delta$, as required.
\end{proof}

\def\cprime{$'$} \def\cprime{$'$}

\
%

\end{document}